\def\Nset{\mathbb N}
\def\Rset{\mathbb R}
\def\Zset{\mathbb Z}
\def\ind{\mathbf{1}}
\def\esp{\mathbb E}
\def\pr{\mathbb P}
\def\var{\mathrm{var}}
\def\cov{\mathrm{cov}}
\def\re{\mathrm{Re}}
\def\im{\mathrm{Im}}
\def\rme{\mathrm{e}}
\def\rmi{\mathrm{i}}
 \def\mcd{\mathcal{D}}
 \def\mcf{\mathcal{F}}
 \def\mcm{{\mathcal M}}
\newtheorem{theo}{Theorem}[section]
\newtheorem{lem}{Lemma}[section]
\newtheorem{prop}{Proposition}[section]
\theoremstyle{remark}
\newtheorem{rem}{Remark}[section]
\numberwithin{equation}{section}
\newcommand{\ie}{{\em i.e.} }
\begin{document}

\title{Asymptotics for Duration-Driven Long Range Dependent Processes}
\author{ Mengchen Hsieh\thanks{New York University} \and Clifford M.
  Hurvich$^*$ \and Philippe Soulier\thanks{Universit\'e Paris X
  \newline We thank the Editor and two referees for their careful
  reading of the paper and for their constructive comments.}}
\maketitle

\begin{abstract}
We consider processes with second order long range dependence
resulting from heavy tailed durations. We refer to this phenomenon
as duration-driven long range dependence (DDLRD), as opposed to
the more widely studied linear long range dependence based on
fractional differencing of an $iid$ process. We consider in detail
two specific processes having DDLRD, originally presented in Taqqu
and Levy (1986), and Parke (1999). For these processes, we obtain
the limiting distribution of suitably standardized discrete
Fourier transforms (DFTs) and sample autocovariances. At low
frequencies, the standardized DFTs converge to a stable law, as do
the standardized sample autocovariances at fixed lags. Finite
collections of standardized sample autocovariances at a fixed set
of lags converge to a degenerate distribution. The standardized
DFTs at high frequencies converge to a Gaussian law. Our
asymptotic results are strikingly similar for the two DDLRD
processes studied. We calibrate our asymptotic results with a
simulation study which also investigates the properties of the
semiparametric log periodogram regression estimator of the memory
parameter.
\end{abstract}

\textit{JEL Classification}: C14; C22

\textit{Keywords}: Long Memory, Heavy Tails, Sample
Autocovariances, Discrete Fourier Transform.

\section{Introduction}

The renewal-reward process of Taqqu and Levy (1986) and the error
duration model of Parke (1999) are nonlinear models with long
memory.  Both models embody useful features not shared by
traditional linear long-memory models (such as ARFIMA), in that
they both allow simultaneously for regime switching and long
memory without requiring these two commonly-observed phenomena to
be separately parameterized. The models intrinsically possess both
structural change and long memory, in an inextricably intertwined
manner, and thus may help practitioners to view these two
phenomena as a duality rather than a dichotomy.

In the renewal-reward process, the value of the process stays constant
at some random level throughout regimes of durations governed by a
sequence of i.i.d. random variables with finite mean but infinite
variance. In the model of Parke (1999), the process is written as a
sum of present and past shocks, where shocks survive in the sum for
durations governed by a long-tailed i.i.d. sequence. In both models,
there is a one-to-one correspondence between the tail index of the
i.i.d. duration sequence and the memory parameter of the process.
Therefore, we will say that both of these models possess
\textit{duration-driven long range dependence} (DDLRD).

Liu (2000) has used the renewal-reward process in a stochastic
volatility model for financial returns exhibiting simultaneous
long memory and regime switching in the volatility. The economic
motivation for such a model is a scenario where trading is
stimulated by news arrivals, and the duration of a volatility
regime created by a given news event is heavy-tailed. Liu (2000)
fitted a such a stochastic volatility model to the returns on the
S\&P 500 and found that it was successful at simultaneously
capturing the long memory and heavy-tailed regime switching of
volatility, and that it was successful at forecasting volatility.

The error duration model of Parke (1999) has drawn considerable
recent attention among practitioners in finance and economics. By
focusing on the duration of shocks rather than on fractional
differencing of the shocks, the model provides an appealing
paradigm for long memory in economic time series and in volatility
of financial series. For example, Bollerslev and Jubinski (1999)
invoked Parke's error duration mechanism to argue that under
certain reasonable assumptions on the duration of the impact of
particular news events, the aggregate information arrival process
will have long memory, a conclusion that supports a version of the
Mixture of Distributions Hypothesis (MDH). Another relevant
example was given by Parke (1999), who argued that the error
duration model provides a plausible mechanism for explaining long
memory in aggregate employment. He assumed that an error
represents the effect of a given firm on aggregate employment and
holds constant for the lifetime of the firm. He then analyzed
survival rates for U.S. businesses, and showed that the rates were
consistent with an error duration model that induces long memory
in employment.

As models possessing DDLRD gain increasing application,
practitioners may feel that, if faced with data generated by a
model having DDLRD, they can safely use the standard methods of
data analysis and statistical inference for long-memory series. In
particular, they may wish to examine the sample autocovariances,
or to construct the log-periodogram regression estimator (GPH) of
the memory parameter, due to Geweke and Porter-Hudak (1983), or to
use the Gaussian semiparametric estimator (GSE) of K\"unsch
(1987). Some caution may be in order here, however, since most of
the existing theory assumes that the series is either Gaussian
(see Robinson 1995a and Hurvich, Deo and Brodsky 1998 for GPH),
linear in an i.i.d. sequence (see Velasco 2000 for GPH), linear in
a Martingale difference sequence (See Robinson 1995b for GSE,
Chung 2002 for autocovariances), or, in the case of volatility,
that the observations can be transformed into a sum of linear
series (see Deo and Hurvich 2001, Hurvich and Soulier 2002, for
GPH applied to long memory stochastic volatility models). If the
Taqqu-Levy and Parke models are to be widely accepted and used, it
is necessary to build a theory for the currently-standard
methodology of long-memory data analysis and inference that
applies to such series. The present paper represents a first step
in that direction. We will explore the asymptotic properties of
the discrete Fourier transforms (DFTs) and sample autocovariances
from both processes. Some of the results are surprising, and tend
to confirm that caution was indeed warranted.

One surprising result we find is that both the sample
autocovariances at a fixed lag and the DFT at a fixed Fourier
frequency, if suitably standardized, have limiting non-Gaussian
stable distributions. This implies that a data analysis based on
examination of the sample autocovariances may be misleading. It
also implies that data analytic methods that rely on the very low
frequency behavior of the DFT of a series with DDLRD will not have
the same asymptotic properties as in the linear long-memory case.
(See, e.g., Chen and Hurvich (2003 a,b) on fractional
cointegration of linear processes). On a more positive note, but
still surprisingly, we find for the DFT at the $j$'th Fourier
frequency $x_j=2\pi j/n$ where $n$ is the sample size, that if $j$
tends to $\infty$ sufficiently quickly, then the DFT is
asymptotically normal. This indicates that the DFT at not-too-low
frequencies has some robustness to the type of long-memory
generating mechanism. It also suggests that standard estimation
methods such as GPH and GSE may retain the same properties that
they are already known to have in the linear case, although some
trimming of very low frequencies may be needed. Our theoretical
results will be augmented with a Monte Carlo study, both to
calibrate the finite-sample applicability of our theorems, and to
briefly explore the properties of the GPH estimator for models
with DDLRD, a topic which we do not attempt to handle
theoretically here.

The organization of the remainder of this paper is as follows. In
Section 2, we review some of the existing theory on second order
long memory processes, so as to contrast it with the theory we
will develop for the DDLRD processes. In Section 3, we give the
precise formulation of the Taqqu-Levy and Parke models, exhibit
their autocovariance functions, present a proposition which shows
that Parke's process is well defined only in the stationary case,
elaborate further on the differences between the models, and
present some basic theory for the two models. In particular, in
Section 3.4 we consider the weak convergence of partial sums for
both processes, and in Section 3.5 we consider asymptotics for the
empirical process in the Taqqu-Levy case. In Section 4, we present
the asymptotics for the discrete Fourier transforms for both
series, treating the cases of low frequencies and high frequencies
separately, as the limiting distribution is different in these two
cases. In Section 5, we consider the asymptotics for the sample
autocovariances of the Parke and Taqqu-Levy processes.
Interestingly, the joint limiting distribution of a collection of
standardized sample autocovariances at a fixed finite set of lags
is degenerate. In Section 6 we present the results of a simulation
study. In Section 7, we present some concluding remarks. In the
Appendix, we present some useful lemmas, and give the proofs of
our main results.

\section{Second order long memory}
\label{sec:rappels} We start by recalling some classical
definitions and facts about long memory processes.  A second order
stationary process $X=\{X_t,\ t\in\Zset\}$ is usually said to be
long range dependent if its autocovariance function
$\gamma(t)=\cov(X_0,X_t)$ is not absolutely summable. This
definition is too wide to be useful. A more practical condition is
that the autocovariance is regularly varying: there exist
$H\in(1/2,1)$ and a function $L$, slowly varying function at $\infty$, such that
\begin{equation}
  \label{eq:deflrd}
  \gamma(t) = L(t) |t|^{2H-2}.
\end{equation}
A function $L$ is slowly varying at $\infty$ if it is bounded on
finite intervals and if $L(at)/L(t) \rightarrow 1$ as
$t\rightarrow \infty$ for all $a>0$. For fractional Gaussian noise
(i.e., the increments of a fractional Brownian motion) $L(t)$ is a
positive constant. For the $ARFIMA(p,d,q)$ model of Granger and
Joyeux (1980), Hosking (1981), $L(t)$ approaches a positive
constant as $t \rightarrow \infty$. Other examples of functions
$L$ that are slowly varying at $\infty$ include $\log t$, powers
of $\log t$, and iterated logarithms. For more details, see
Resnick (1987) or Bingham, Goldie and Teugels (1989). Since the
autocovariances of the processes considered in this paper are
generated by tail probabilities of positive random variables, our
general regular variation assumption is needed to allow our theory
to cover cases of practical interest.

Under condition (\ref{eq:deflrd}), it holds that:
\begin{gather}  \label{eq:varsum}
     \lim_{n\to\infty} n^{-2H} L(n)^{-1}   \var \left(
     \sum_{t=1}^n X_t \right) = -4 \, \Gamma(-2H) \cos(\pi H).
\end{gather}
A second order stationary process satisfying \eqref{eq:varsum}
will be referred to as a second order long memory process, and the
coefficient $H$ is the long memory parameter of the process, often
referred to as the Hurst coefficient of the process $X$. We will
henceforth use this terminology.

A weakly stationary process with autocovariance function
satisfying (\ref{eq:deflrd}) has a spectral density, \ie there
exists a function $f$ such that
\[
\gamma(t) = \int_{-\pi}^\pi f(x) \rme^{\rmi tx} dx.
\]
The function $f$ is the sum of the series
\[
\frac1{2\pi} \sum_{t\in \Zset} \gamma(t) \rme^{-\rmi tx},
\]
which converges uniformly on the compact subsets of
$[-\pi,\pi]\setminus\{0\}$ and in $L^1([-\pi,\pi],dx)$. It is then
well known that the behaviour of the function $f$ at zero is
related to the rate of decay of~$\gamma$. More precisely, if we
assume in addition that $L$ is ultimately monotone, we obtain the
following Tauberian result:
\begin{gather} \label{eq:spectraldensity}
  \lim_{x \to 0} L(1/x)^{-1} x^{2H-1} f(x) = \pi^{-1} \Gamma(2H-1)
  \sin(\pi H).
\end{gather}
(Cf. for instance Taqqu (2003), Proposition 4.1). The usual tools
of statistical analysis of weakly stationary processes are the
empirical autocovariance function, the discrete Fourier transform
(DFT) and the periodogram. We will focus here on the DFT and
periodogram ordinates of a sample $X_1,\dots,X_n$, defined as
\[
d_{X,k} = (2\pi n)^{-1/2} \sum_{t=1}^n X_t \rme^{\rmi t x_k}, \ \
I_{X,k} = |d_{X,k}|^2,
\]
for integers $k$, $1 \leq k < n/2$. In the classical weakly
stationary short memory case (when the autocovariance function is
absolutely summable), it is well known that the periodogram is an
asymptotically unbiased estimator of the spectral density.  This
is no longer true for second order long memory processes. Hurvich
and Beltrao (1993) showed (assuming that $L(1/x)$ is continuous at
$x=0$, though the extension is straightforward) that for any fixed
positive integer $k$, there exists a constant $c(k,H)>0$ such that
\[
\lim_{n\to\infty} \esp[I_{X,k}/f(x_k)] = c(k,H).
\]
The previous results hold for any second order long memory
process.  We now describe some weak convergence results that are
valid for Gaussian or linear processes.

If $X$ is a second order long memory Gaussian process, then
$L(n)^{-1/2}n^{-H} \sum_{k=1}^{[nt]} X_k$ converges weakly to the
fractional Brownian motion $B_H(t)$ which is the zero mean
Gaussian process with covariance function given by:
\[
\esp[B_H(s)B_H(t)] = \frac12
\left(|s|^{2H}-|t-s|^{2H}+|t|^{2H}\right).
\]
Here weak convergence is in the space $\mcd$ of right-continuous
and left-limited (c\`adl\`ag) functions on $[0,\infty)$.

This result can be extended to a strict sense linear process, \ie
a process $X$ for which there exists a sequence
$(\epsilon_j)_{j\in\Zset}$ of i.i.d. random variables with zero
mean and finite variance, and a square summable sequence of real
numbers $(a_j)_{j\in\Zset}$ such that for all $t\in\Zset$,
\[
X_t = \sum_{j\in\Zset} a_j \epsilon_{t-j}.
\]
If $a_j=L(j) |j|^{H-3/2}$, then $X$ is a second order long memory
process with Hurst coefficient $H$, and $L(n)^{-1/2} n^{-H}$
$\sum_{k=1}^{[nt]} X_k$ converges weakly, in the sense of weak
convergence of finite dimensional distributions, to the fractional
Brownian motion $B_H(t)$.  This can be proved easily by applying
the Central Limit Theorem for linear processes of Ibragimov and
Linnick (1971, Theorem 18.6.4).  \nocite{ibragimov:linnik:1971}
Weak convergence in the space
$\mcd$ can also be proved. Cf. %Davydov (1970),
Gorodeckii (1977) or  Lang and
Soulier (2000).  %\nocite{davydov:1970}
\nocite{gorodeckii:1977} \nocite{lang:soulier:2000} A classical
example of such a long memory linear process is the
ARFIMA($p,d,q$) process, whose Hurst coefficient is $H=1/2+d$.

For Gaussian and linear processes, a weak convergence result can
also be obtained for the periodogram and the DFT ordinates.  For
any fixed $j$, $f(x_j)^{-1/2} d_{X,j}$ converges to a complex
Gaussian distribution with dependent real and imaginary parts.
Cf. Terrin and Hurvich (1994),  Chen and Hurvich (2003 a,b),
Walker (2000), and Lahiri (2003).

The asymptotic behaviour described above is different from the
behaviour of weakly dependent processes, such as sequences of
i.i.d.  or strongly mixing random variables, whose partial sum
process, renormalised by the usual rate $\sqrt n$, converges to
the standard Brownian motion.  But these long memory processes
share with weakly dependent processes the Gaussian limit and the
fact that weak limits and $L^2$ limits have consistent
normalisations, in the sense that, if $\xi_n$ denotes one of the
statistics considered above, there exists a sequence $v_n$ such
that $v_n \xi_n$ converges weakly to a non degenerate distribution
and $v_n^2 \esp[\xi_n^2]$ converges to a positive limit (which is
the variance of the asymptotic distribution).

In the sequel we define two second order stationary models, which
possess properties (\ref{eq:deflrd}) and
(\ref{eq:spectraldensity}), but whose weak limit behaviour is
extremely different from that of Gaussian or linear models. In
Section \ref{sec:formulation} we define these models.
\section{Formulation of the Models} \label{sec:formulation}
\subsection{The Taqqu-Levy Model}
Let $\{T_k\}$ be i.i.d. positive integer-valued random variables
with mean $\mu$, in the domain of attraction of a stable
distribution with tail index $\alpha \in (1,2)$, \ie there exists
a function $L$, slowly varying at infinity such that for all
$n\geq1$,
\begin{align} \label{eq:regvarrenewal}
 \pr(T_1 \geq n) =L(n) n^{-\alpha}.
\end{align}
To avoid trivialities, we also assume that $\pr(T_1=1)>0$.  Let
$S_0$ be a non-negative integer-valued random variable,
independent of the $\{T_k\}$, with probability distribution
\begin{equation} \label{eq:SoEq}
P(S_0=u)=\mu^{-1} P(T_k \geq u+1), \ u=0,1,\ldots.
\end{equation}
Let $\{W_k\}$ be i.i.d. random variables with $E[W_k]=0$ and
$\var[W_k]=\sigma_W^2 < \infty$.  Assume that the $\{W_k\}$ are
independent of $S_0$ and $\{T_k\}$. We observe a process denoted
by $\{X_t\}$ for $t=1,\ldots , n$. The observed process is
constant on regimes (intervals) determined by $S_0$ and the
interarrival times $T_k$. The constant value on each regime is
given by one of the $\{W_k\}$. The time between the start of the
sample and the first change of regime is $S_0$, and the subsequent
waiting times are $T_1,T_2,\ldots$. The total time up to the end
of the $k$'th regime $(k=0,1,\ldots)$ is given by $S_{-1} \equiv
-1$, $S_0$ and
\[
S_k=S_0+T_1+\ldots+T_k,  \ k=1,2,\ldots.
\]
The observed process $\{X_t\}$ is given by $W_k$ if $t$ lies in
the $k$'th regime, so that
\begin{equation} \label{eq:defx}
X_t=\sum_{k=0}^{\infty} W_k \ind_{\{S_{k-1} \leq t < S_k\}},
\end{equation}
where $\ind_A$ is the indicator function of the set $A$.  Let
$M_n$ be the counting process associated with the renewal process
$\{S_0,S_1,\dots\}$, \ie a non-negative integer-valued random
variable denoting the total number of regime changes in the series
before the time $n-1$:
\[
M_n = k \Leftrightarrow S_{k-1} \leq n < S_k.
\]
The renewal process $\{S_0,S_1,\dots\}$ is called a stationary
renewal process, in the sense that the counting process $M_n$ has
stationary increments, whence the following result (Cf. Liu 2000,
Theorem 1.1).
\begin{prop}  \label{prop:Xstationnaire}
  The process $X$ defined by \eqref{eq:defx} is strictly stationary
  with zero mean and covariances
  \begin{gather*}
    \cov(X_0,X_r) = \sigma_W^2 \pr(S_0\geq r) = \mu^{-1} \sigma_W^2
    \esp[(T_1-r) \ind_{\{T_1 \geq r\}}].
  \end{gather*}
If (\ref{eq:regvarrenewal}) holds with $1 < \alpha < 2$ and $L$
ultimately monotone, then $X$ is second order long memory with
  Hurst coefficient $H=(3-\alpha)/2$ and spectral density $f$
  satisfying
\begin{gather*}
  \lim_{x\to0} L(1/x)^{-1} x^{2H-1} f(x) =
  \frac{\sigma_W^2 }{2\pi(1-H) \mu} \; \Gamma(2H-1) \sin(\pi H).
  \end{gather*}
\end{prop}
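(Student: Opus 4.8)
The plan is to treat the statement in three stages: first the distributional identities (stationarity, zero mean, and the covariance formula), then the regularly varying tail of the covariance, and finally the spectral density through the Tauberian result \eqref{eq:spectraldensity}.

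For the first stage, I would note that because the initial delay $S_0$ carries the equilibrium distribution \eqref{eq:SoEq}, the counting process $M_n$ has stationary increments, so the regime-label process is stationary; since the rewards $\{W_k\}$ are i.i.d. and independent of $\{S_k\}$, strict stationarity of $X$ follows, which is exactly the content of Liu (2000, Theorem 1.1). Writing $X_t = W_k$ on the regime $\{S_{k-1}\le t<S_k\}$ and using $\esp[W_k]=0$ with independence gives $\esp[X_t]=0$. For the covariance I would expand $\esp[X_0X_r]=\sum_{j,k}\esp[W_jW_k]\,\pr(S_{j-1}\le 0<S_j,\,S_{k-1}\le r<S_k)$; since the $W$'s are centered, independent, and independent of the $S$'s, only the diagonal $j=k$ survives, with weight $\sigma_W^2$, leaving $\cov(X_0,X_r)=\sigma_W^2\,\pr(\{0 \text{ and } r \text{ lie in the same regime}\})$. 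The remaining task is to identify this probability with $\pr(S_0\ge r)$: this is precisely where the equilibrium choice of $S_0$ enters, since under it the forward recurrence time from $0$ is distributed as $S_0$, so that $0$ and $r$ share a regime exactly when this recurrence time is at least $r$.

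The second equality is then purely algebraic. Using \eqref{eq:SoEq}, $\pr(S_0\ge r)=\mu^{-1}\sum_{u\ge r}\pr(T_1\ge u+1)=\mu^{-1}\sum_{v>r}\pr(T_1\ge v)$, and interchanging the order of summation (Fubini) rewrites $\sum_{v>r}\pr(T_1\ge v)$ as $\esp[(T_1-r)\ind_{\{T_1\ge r\}}]$, using that $T_1$ is integer-valued so the term at $v=r$ contributes nothing. This yields the stated closed form and, incidentally, the representation $\cov(X_0,X_r)=\mu^{-1}\sigma_W^2\sum_{v>r}\pr(T_1\ge v)$ that drives the asymptotics.

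For the third stage, under \eqref{eq:regvarrenewal} the tail $\pr(T_1\ge v)=L(v)v^{-\alpha}$ is regularly varying of index $-\alpha$ with $\alpha>1$, so Karamata's theorem on summation of regularly varying sequences gives $\sum_{v>r}\pr(T_1\ge v)\sim(\alpha-1)^{-1}L(r)r^{1-\alpha}$ as $r\to\infty$, whence $\cov(X_0,X_r)\sim \frac{\sigma_W^2}{\mu(\alpha-1)}L(r)r^{1-\alpha}$. Putting $H=(3-\alpha)/2$ gives $1-\alpha=2H-2$ and $\alpha-1=2(1-H)$, so $\gamma(r)=\tilde L(r)\,r^{2H-2}$ with the slowly varying function $\tilde L(r)=\frac{\sigma_W^2}{2\mu(1-H)}L(r)$, and $\alpha\in(1,2)$ forces $H\in(1/2,1)$, establishing \eqref{eq:deflrd}. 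Since $\tilde L$ is ultimately monotone whenever $L$ is, the Tauberian statement \eqref{eq:spectraldensity} applies with $\tilde L$ in place of $L$, yielding $\lim_{x\to0}\tilde L(1/x)^{-1}x^{2H-1}f(x)=\pi^{-1}\Gamma(2H-1)\sin(\pi H)$; substituting $\tilde L(1/x)=\frac{\sigma_W^2}{2\mu(1-H)}L(1/x)$ produces the claimed constant $\frac{\sigma_W^2}{2\pi(1-H)\mu}\Gamma(2H-1)\sin(\pi H)$. The one genuinely probabilistic obstacle is the identity $\pr(\{0,r\text{ in the same regime}\})=\pr(S_0\ge r)$, which requires careful renewal bookkeeping (the boundary case $S_0=0$ and the exact alignment of ``recurrence time $\ge r$''); everything downstream is Karamata's theorem together with routine manipulation of the constants.
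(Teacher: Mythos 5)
Your proposal is correct and follows exactly the route the paper intends: the paper offers no separate proof of Proposition~\ref{prop:Xstationnaire}, citing Liu (2000, Theorem 1.1) for the stationarity and covariance identity and then relying on Karamata summation plus the Tauberian fact \eqref{eq:spectraldensity} from Section~\ref{sec:rappels}, which is precisely your three-stage argument. Your computations check out, including the delicate identity $\pr(\{0,r\ \text{in the same regime}\})=\pr(S_0\geq r)$ (the $S_0=0$ case, where the regime containing $0$ has length $T_1$, contributes $\mu^{-1}\pr(T_1\geq r+1)$ and restores the tail sum exactly), and the Tauberian step is legitimate here since $\gamma$ itself is non-increasing, being a constant multiple of the tail sum $\sum_{v>r}\pr(T_1\geq v)$.
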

\subsection{The Parke Model}
Let $(\epsilon_t)_{t\in\Zset}$ be a sequence of i.i.d. random
variables with zero mean and $(n_s)_{s\in\Zset}$ be a sequence of i.i.d.
non-negative integer valued random variables which is independent
of $(\epsilon_t)_{t\in\Zset}$.  For $s\in\Zset$, define
\[
g_{s,t}=1  \Leftrightarrow  s \leq t \leq s + n_s.
\]
Parke's error duration process is then defined as:
\[
X_t = \sum_{s\leq t} g_{s,t} \epsilon_s,
\]
Let $N$ be a generic random variable with the same distribution as
the $n_s$, and define
\[
p_k := \pr(N \geq k) \ \ k \geq 0.
\]
$(p_k)_{k\geq 0}$ is then a non-increasing sequence such that
$p_0=1$ and $\lim_{k\to\infty} p_k = 0$.

Parke (1999) does not discuss the existence of this process. In
his main result, he assumes that it is well defined and second
order stationary. Since the terms in the sum defining the process
are not vanishing, by well defined we mean that the sum is almost
surely finite.  We now give a necessary and sufficient condition
for the process $X$ to be well defined.
\begin{prop} \label{prop:WellDefined}
  Parke's process is well defined if and only if $\esp[N] < \infty$.
  In that case it is strictly stationary. If moreover $\epsilon_0$ has
  variance $\sigma_\epsilon^2$, then Parke's
  process has mean zero, finite variance and
  covariances
\begin{gather*}
  \cov(X_0,X_r) = \sigma_\epsilon^2 \sum_{j \geq r} p_j =
  \sigma_\epsilon^2 \, \esp[(N+1-r) \ind_{\{N \geq r\}}]  =
  \sigma_\epsilon^2\sum_{k=r}^\infty p_k.
\end{gather*}
If the survival probabilities $p_k$ are regularly varying with
index $\alpha\in(1,2)$, \ie if they satisfy
\begin{gather}  \label{eq:regvar}
  p_j = \pr(N \geq j) = L(j) j^{-\alpha}, \ \ \ j \geq 1,
\end{gather}
where $L$ is slowly varying and ultimately monotone at infinity,
then Parke's error duration model $X$ exhibits second order long
memory with Hurst coefficient $H= (3-\alpha)/2$ and its spectral
density $f$ satisfies
  \begin{gather*}
    \lim_{x\to0} L(1/x)^{-1} x^{2H-1}f(x) =
    \frac{\sigma_\epsilon^2 }{2\pi(1-H)} \; \Gamma(2H-1) \sin(\pi H).
  \end{gather*}
\end{prop}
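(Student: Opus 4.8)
The plan is to rewrite the series so that its independence structure is transparent and then dispatch the four assertions in turn. Reindexing by the lag $k=t-s\ge 0$, I would write $X_t=\sum_{k\ge 0}\epsilon_{t-k}\,\ind_{\{n_{t-k}\ge k\}}$, a series of independent summands because the pairs $(\epsilon_s,n_s)$ are i.i.d.\ in $s$, with $\pr(n_{t-k}\ge k)=p_k$. The basic identity is $\sum_{k\ge 0}p_k=p_0+\sum_{k\ge 1}\pr(N\ge k)=1+\esp[N]$, so $\sum_k p_k<\infty$ exactly when $\esp[N]<\infty$. For the well-definedness equivalence I would study the independent events $A_k=\{n_{t-k}\ge k\}$ with $\pr(A_k)=p_k$. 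If $\esp[N]<\infty$, then $\sum_k\pr(A_k)<\infty$, and the first Borel--Cantelli lemma shows that only finitely many $A_k$ occur almost surely, so $X_t$ is a finite sum and hence well defined. Conversely, if $\esp[N]=\infty$, then $\sum_k\pr(A_k)=\infty$; choosing $c>0$ with $\pr(|\epsilon_0|>c)>0$ gives $\sum_k\pr(|\epsilon_{t-k}|\ind_{A_k}>c)=\pr(|\epsilon_0|>c)\sum_k p_k=\infty$, so Kolmogorov's three-series theorem forbids almost sure convergence (equivalently, infinitely many nonzero i.i.d.\ summands cannot tend to zero). The only extra hypothesis needed here is that $\epsilon_0$ be nondegenerate, which holds once $\sigma_\epsilon^2>0$.

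Strict stationarity would follow from the observation that $X_t=\Phi\big((\epsilon_{t-k},n_{t-k})_{k\ge 0}\big)$ for a single measurable functional $\Phi$ not depending on $t$; since $(\epsilon_s,n_s)_{s\in\Zset}$ is i.i.d.\ and therefore shift-invariant in distribution, every finite-dimensional law of $X$ is preserved under a common time shift. For the moments I would use the partial sums $X_t^{(m)}=\sum_{k=0}^m\epsilon_{t-k}\,\ind_{\{n_{t-k}\ge k\}}$, which converge to $X_t$ almost surely by the first part. Using $\esp[\epsilon_s]=0$, $\esp[\epsilon_s\epsilon_{s'}]=\sigma_\epsilon^2\ind_{\{s=s'\}}$ and the independence of the $\epsilon$'s from the $n$'s, one gets $\esp[X_t^{(m)}]=0$ and $\esp[(X_t^{(m)})^2]=\sigma_\epsilon^2\sum_{k=0}^m p_k$; the latter is Cauchy, so $X_t^{(m)}\to X_t$ in $L^2$, which justifies passing to the limit and yields $\esp[X_t]=0$ and $\var[X_t]=\sigma_\epsilon^2(1+\esp[N])<\infty$.

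The covariance is obtained the same way: $\cov(X_0,X_r)=\lim_m\esp[X_0^{(m)}X_r^{(m)}]$, and since the cross terms vanish unless the two $\epsilon$-indices coincide, only the diagonal $s=s'\le 0$ survives, leaving $\cov(X_0,X_r)=\sigma_\epsilon^2\sum_{s\le 0}\pr(n_s\ge r-s)=\sigma_\epsilon^2\sum_{j\ge r}p_j$ after setting $j=r-s$. Interchanging the two summations (Fubini--Tonelli) turns this into $\sum_{j\ge r}p_j=\sum_{n\ge r}(n-r+1)\pr(N=n)=\esp[(N+1-r)\ind_{\{N\ge r\}}]$, giving the three stated forms.

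Finally, under $p_j=L(j)j^{-\alpha}$ with $\alpha\in(1,2)$, the covariance $\gamma(r)=\sigma_\epsilon^2\sum_{j\ge r}p_j$ is a tail sum of a regularly varying sequence, so Karamata's theorem (valid since $\alpha>1$) gives $\gamma(r)\sim\frac{\sigma_\epsilon^2}{\alpha-1}L(r)r^{1-\alpha}=\frac{\sigma_\epsilon^2}{2(1-H)}L(r)r^{2H-2}$ with $\alpha=3-2H$. Thus $\gamma$ has the form \eqref{eq:deflrd} with Hurst coefficient $H=(3-\alpha)/2$ and slowly varying factor $\tilde L(r)\sim\frac{\sigma_\epsilon^2}{2(1-H)}L(r)$, and substituting $\tilde L$ into the Tauberian relation \eqref{eq:spectraldensity} reproduces the claimed constant $\frac{\sigma_\epsilon^2}{2\pi(1-H)}\Gamma(2H-1)\sin(\pi H)$. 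I expect the two delicate points to be, first, the converse direction of the well-definedness equivalence, where ``$\sum_k p_k=\infty$'' must be upgraded to genuine almost sure divergence via the three-series argument and the nondegeneracy of $\epsilon_0$; and second, the verification of the Tauberian hypotheses for $\tilde L$: although $\gamma$ itself is monotone (a tail sum of nonnegative terms) and regularly varying, its slowly varying factor $\tilde L(r)=\gamma(r)r^{\alpha-1}$ need not be monotone, so I would either invoke a form of the Tauberian theorem that only uses monotonicity of $\gamma$ or check ultimate monotonicity of $\tilde L$ directly from that of $L$.
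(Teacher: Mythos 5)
Your proof is correct, and it is considerably more self-contained than the paper's; the one step where you genuinely diverge is the converse of the well-definedness equivalence. The paper phrases well-definedness as a statement about the durations alone---almost surely, $\inf\{s\in\Zset : s+n_s\ge t\}>-\infty$ for every $t$, i.e.\ only finitely many shocks survive at each time---and settles both directions at once by applying the two Borel--Cantelli lemmas to the independent events $\{n_s\ge t-s\}$, whose probabilities sum to $\sum_{k\ge0}\pr(N\ge k)$. You instead read ``well defined'' as almost sure convergence of the series $\sum_{k\ge0}\epsilon_{t-k}\ind_{\{n_{t-k}\ge k\}}$ and prove the converse by the three-series theorem; as you yourself note, this requires $\epsilon_0$ to be nondegenerate (if $\epsilon_0\equiv 0$ the series converges trivially whatever $N$ is), whereas under the paper's formulation the equivalence holds with no hypothesis on $\epsilon$ at all, the second Borel--Cantelli lemma already producing infinitely many surviving shocks when $\esp[N]=\infty$. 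The two readings agree in the nondegenerate case, which is the only one of interest, so your extra hypothesis is harmless---and your flagging it is exactly right. For the remaining assertions the paper is terse: strict stationarity is asserted without proof (your shift-invariant-functional argument is the standard justification), the covariance formula is outsourced to Parke (1999, Proposition 1) (your $L^2$-truncation derivation proves it from scratch, and the diagonal computation $\cov(X_0,X_r)=\sigma_\epsilon^2\sum_{j\ge r}p_j$ is correct), and the long-memory and spectral-density claims are left to follow implicitly from the Section 2 generalities---which is precisely your Karamata-plus-\eqref{eq:spectraldensity} computation, with the constants checking out since $\alpha-1=2-2H$. Your closing caveat about the Tauberian hypothesis is also well taken and is glossed over by the paper: the slowly varying factor $\tilde L(r)=\gamma(r)r^{\alpha-1}$ is only asymptotically proportional to $L$, but since $\gamma$ is a tail sum of the non-increasing sequence $(p_j)$ it is itself non-increasing (indeed convex), so the classical Tauberian theorems for Fourier coefficients of monotone regularly varying sequences apply directly, as you suggest.
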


\subsection{Differences between the two models}

In the error duration process, the durations are independent of
birth dates which are deterministic and shocks can overlap; in the
renewal-reward process, the durations are exactly equal to the
interval between two consecutive birth dates, which form a renewal
process, so that there is only one surviving reward at any given
time point, and it is precisely the value of the process. In both
cases long memory is caused by the heavy-tailedness of the
durations. This property implies that some of the durations are
eventually extremely long, as illustrated in
Figures~\ref{fig:parke} and~\ref{fig:renewal}.

Variations on these processes, which retain their main features,
are possible; see Deo, Hsieh, Hurvich and Soulier (2006) for a
detailed account.  Given the econometric motivation of the error
duration and renewal-reward processes given by Parke (1999) and
Liu (2000), and since it would be difficult to present a
completely unified presentation and proof of our results, we
consider in this paper only these two processes.

In Figure~\ref{fig:parke}, the horizontal lines represent the
durations $n_s$ of the shocks $\epsilon_s$. The value of the error
duration process (the bullet) at some time point is the sum of
present and past shocks. The shock $\epsilon_{t-2}$ lasts for a
very long time and is still present at time $t+1$, whereas the
shock $\epsilon_{t-1}$ exists only over one period of time.

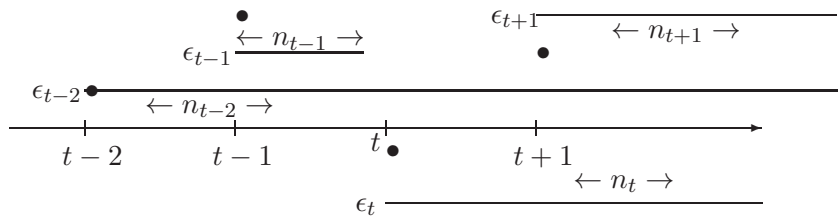
\begin{figure}[h]
  \centering \setlength{\unitlength}{1mm}
\begin{picture}(100,30)(-50,-10)

\put(-60,0){\vector(1,0){100}} %
\put(-10,-1){\line(0,1){2}} %
\put(-12,-3){$t$} %
\put(10,-1){\line(0,1){2}} %
\put(7,-5){$t+1$}

\put(-50,-1){\line(0,1){2}} %
\put(-50,4){$\bullet$} %
\put(-53,-5){$t-2$} %
\put(-50,5){\line(1,0){100}} %
\put(-57,4){$\epsilon_{t-2}$} %
\put(-42,2){$\leftarrow n_{t-2} \rightarrow$} %

\put(-30,-1){\line(0,1){2}}  %
\put(-33,-5){$t-1$} %
\put(-30,14){$\bullet$} %
\put(-30,10){\line(1,0){17}} %
\put(-37,9){$\epsilon_{t-1}$} %
\put(-30,11){$\leftarrow n_{t-1} \rightarrow$}

\put(-10,-10){\line(1,0){50}} %
\put(-10,-4){$\bullet$} %
\put(-14,-11){$\epsilon_{t}$} %
\put(15,-8){$\leftarrow n_{t} \rightarrow$}

\put(10,15){\line(1,0){40}} %
\put(10,9){$\bullet$} %
\put(4,14){$\epsilon_{t+1}$} %
\put(20,12){$\leftarrow n_{t+1} \rightarrow$}

\end{picture}

\caption{A path ($\bullet$) of the error duration process:
  $X_t = \epsilon_{t-2} + \epsilon_{t}$, $X_{t+1} = \epsilon_{t-2} +
  \epsilon_t + \epsilon_{t+1}$.}
\label{fig:parke}

\end{figure}

In Figure~\ref{fig:renewal}, a portion of a path of the
renewal-reward process is drawn; the interval $T_{k+1}$ is very
long, so that the process $X$ is constant over a long period of
time between $S_k$ and $S_{k+1}$.

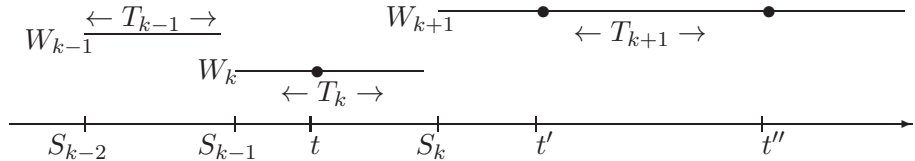
\begin{figure}[h]
  \centering \setlength{\unitlength}{1mm}
\begin{picture}(100,30)(-50,-10)

\put(-60,0){\vector(1,0){120}} %
\put(-20,-1){\line(0,1){2}}%
\put(-20,-4){$t$} %
\put(10,-1){\line(0,1){2}} %
\put(10,-4){$t'$}%
\put(40,-1){\line(0,1){2}} %
\put(40,-4){$t''$}%

\put(-50,-1){\line(0,1){2}} %
\put(-55,-4){$S_{k-2}$} %
\put(-50,12){\line(1,0){18}} %
\put(-58,10){$W_{k-1}$} %
\put(-50,13){$\leftarrow T_{k-1} \rightarrow$}

\put(-30,-1){\line(0,1){2}} %
\put(-35,-4){$S_{k-1}$}
\put(-20,6){$\bullet$} %
\put(-30,7){\line(1,0){25}} %
\put(-35,6){$W_{k}$} \put(-24,3){$\leftarrow T_{k} \rightarrow$}

\put(-3,-1){\line(0,1){2}} %
\put(-6,-4){$S_{k}$}
\put(-3,15){\line(1,0){62}} %
\put(-10,13){$W_{k+1}$} \put(15,11){$\leftarrow T_{k+1}
\rightarrow$}
\put(10,14){$\bullet$} %
\put(40,14){$\bullet$} %

\end{picture}

\caption{A path ($\bullet$) of the renewal-reward process:
  $X_t = W_{k}$, $X_{t'} = X_{t''} = W_{k+1}$.}
\label{fig:renewal}

\end{figure}

%\clearpage

\subsection{Asymptotics For Partial Sums}
Let $X$ denote either Parke's or Taqqu-Levy's process.  The next
proposition shows that although the process $X$ is second order
stationary and its autocovariance function exhibits long range
dependence, the partial sum process of $X$ converges to a stable
L\'evy process with independent increment, which implies that its
behaviour mimics that of a sum of i.i.d. heavy tailed random
variables. In the case of the Taqqu-Levy Process, it is stated
without proof in Taqqu and Levy (1986); it can also be seen as a
particular case of Theorem 2 in Mikosch {\em et al.} (2002).
\nocite{mikosch:resnick:rootzen:stegeman:2002}
\begin{prop} \label{prop:invariance}
  Assume that (\ref{eq:regvarrenewal}) and \eqref{eq:regvar} hold with
  $1 < \alpha < 2$. Denote  $\ell(n)$ =
  $n^{-1/\alpha} \inf$ $\left\{ t>0: \; \pr( U > t) < n^{-1}
  \right\}$ with $U = T_1$ for the Taqqu-Levy process
  and $U=N$ for the Parke process. Then  the finite dimensional distributions of $\ell(n)^{-1}
n^{-1/\alpha} \sum_{k=1}^{[nt]} X_k$ converge weakly to
  those of the $\alpha$-stable Levy process $\Lambda_\alpha$ with characteristic
  function
  \begin{gather} \label{eq:cflevy}
\esp[\rme^{\rmi u \Lambda_\alpha(t)}] = \exp \left\{ - t \,
|u|^\alpha  \mu^{-1} \esp[|\xi|^\alpha] \Gamma(1-\alpha)
\cos(\pi\alpha/2) (1 -
  \rmi \, \beta \rm{sign}(u) \tan(\pi\alpha/2)) \right\},
\end{gather}
with $\beta = (\esp[\xi_+^\alpha] -
\esp[\xi_-^\alpha])/\esp[|\xi|^\alpha]$ and $\xi=W_1$ in the case
of Taqqu-Levy's process and $\xi=\epsilon_1$ and $\mu=1$ in the
case of Parke's process.
\end{prop}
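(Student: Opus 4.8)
The plan is to reduce both partial sums to randomly-indexed sums of i.i.d.\ heavy-tailed summands and then invoke a stable limit theorem, treating Taqqu--Levy first. Since the regimes $[S_{k-1},S_k)$ partition the time axis, one has exactly
\begin{gather*}
  \sum_{t=1}^{m} X_t = \sum_{k=0}^{M_m} W_k \, \ell_k^{(m)},
\end{gather*}
where $\ell_k^{(m)}=\#\{t\in\{1,\dots,m\}:\,S_{k-1}\leq t<S_k\}$. For every regime strictly interior to $[1,m]$ one has $\ell_k^{(m)}=T_k$, so this equals $\sum_{k=1}^{M_m} W_kT_k$ up to the two boundary regimes. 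Because $\var(W_k)<\infty$ and $\alpha<2$, Breiman's lemma applies and gives $\pr(W_kT_k>x)\sim\esp[W_+^\alpha]\,L(x)x^{-\alpha}$ and $\pr(W_kT_k<-x)\sim\esp[W_-^\alpha]\,L(x)x^{-\alpha}$, so each centred summand $W_kT_k$ (recall $\esp[W_k]=0$) is regularly varying with index $\alpha$. For Parke's model the birth dates are deterministic, so computing the overlap of $[s,s+n_s]$ with $[1,m]$ yields analogously a sum of order $m$ i.i.d.\ summands whose dominant part is $\epsilon_s n_s$, with the same Breiman tails; here the shock rate is $1$, which is why $\mu=1$.

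Next I would establish the one-dimensional limit. The number of interior summands satisfies $M_m\sim m/\mu$ almost surely by the strong law for renewal processes (for Parke it is exactly $m$). Writing $a_N=\inf\{x:\pr(U>x)<1/N\}$, so that the stated normalisation is $\ell(n)^{-1}n^{-1/\alpha}=a_n^{-1}$, the classical generalised CLT gives $a_N^{-1}\sum_{k=1}^{N}W_kT_k\Rightarrow Z$, a standard $\alpha$-stable law whose scale and skewness are read off from the Breiman tails above. Regular variation of $a_\cdot$ yields $a_{M_m}/a_n\to(t/\mu)^{1/\alpha}$, and combining this with the limit for a fixed number of summands produces the marginal $\Lambda_\alpha(t)$: the factor $t\mu^{-1}$ comes from the number of summands, $\esp[|\xi|^\alpha]$ and $\Gamma(1-\alpha)\cos(\pi\alpha/2)$ are the standard stable normalising constant from Breiman, and the asymmetry of $\xi$ produces $\beta=(\esp[\xi_+^\alpha]-\esp[\xi_-^\alpha])/\esp[|\xi|^\alpha]$.

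Finally, for the finite-dimensional distributions I would prove asymptotic independence of the increments over disjoint intervals $(nt_{i-1},nt_i]$. The decisive quantitative fact is that the largest of the $\sim n/\mu$ durations is of order $a_n=n^{1/\alpha}\ell(n)$, which is $o(n)$ because $\alpha>1$; hence any regime or shock whose duration is large enough to survive normalisation spans only $o(n)$ time points, so the probability that it straddles a fixed boundary $nt_i$ is of order $n^{1/\alpha-1}\to0$. Consequently the summands feeding distinct increments are, with probability tending to one, carried by disjoint collections of regimes, the boundary regimes shared between adjacent increments contribute negligibly after normalisation, and the same $o(n)$ bound shows that truncating durations at the window length is asymptotically irrelevant. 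The increments therefore converge jointly to independent $\alpha$-stable variables, identifying the limit as the $\alpha$-stable L\'evy process $\Lambda_\alpha$ with characteristic function \eqref{eq:cflevy}; alternatively this entire step, together with the marginal convergence, follows from the point-process formulation of Mikosch {\em et al.} (2002, Theorem 2).

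The main obstacle is the dependence, in the Taqqu--Levy case, between the random number of summands $M_m$ and the summands $W_kT_k$ themselves, since $M_m$ is a functional of the very durations $T_k$ that drive the heavy tails; this blocks a naive appeal to an Anscombe theorem for independently chosen indices and must be handled either by the renewal strong law coupled with a tightness estimate uniform in the index, or by passing directly to the Poisson limit of the point process of normalised jumps, where the random centring is automatic.
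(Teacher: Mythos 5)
Your proposal is correct in substance and, for Parke's process, coincides with the paper's own proof: the paper likewise computes the overlap of each shock's lifetime with $[1,n]$, writing $\sum_{k=1}^n X_k = U_n + V_{1,n} - V_{2,n} + V_{3,n}$ with $V_{1,n}=\sum_{s=1}^n (n_s+1)\epsilon_s$ the dominant term, disposes of the boundary terms $U_n$, $V_{2,n}$, $V_{3,n}$ as $O_P(1)$ by Borel--Cantelli arguments, and then invokes exactly your two ingredients: Breiman's theorem (applicable since $\esp[\epsilon_0^2]<\infty$ and $\alpha<2$) to get the regular variation of $(n_s+1)\epsilon_s$, and the classical stable limit theorem for i.i.d.\ sums (Embrechts \emph{et al.}, Prop.~2.2.13). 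Where you genuinely differ is the Taqqu--Levy case: the paper does not prove it at all, remarking only that it is stated without proof in Taqqu and Levy (1986) and is a particular case of Theorem 2 of Mikosch \emph{et al.} (2002) --- the very point-process fallback you offer. Your direct route through $\sum_{k=1}^{M_m} W_kT_k$ is viable, and the obstacle you correctly isolate (the random index $M_m$ is a functional of the same durations that drive the tails) is resolved in the paper's toolbox by Lemma \ref{lem:remplace}, which the paper uses for this purpose in the proofs of Theorem \ref{theo:empirique} and Proposition \ref{prop:lowfreq}: the stable limit theorem for $M_n-n/\mu$ combined with Kolmogorov/Burkholder maximal inequalities for the martingale $\sum_k W_kT_k$ (note $\esp[|W_1T_1|^p]<\infty$ for all $p<\alpha$) gives $\sum_{k=1}^{M_n}W_kT_k-\sum_{k=1}^{[n/\mu]}W_kT_k=o_P(n^{1/\alpha}h(n))$ for any slowly varying $h$, which is precisely your ``tightness estimate uniform in the index.'' This replacement also repairs the one soft spot in your finite-dimensional step: ``carried by disjoint collections of regimes'' does not by itself yield independence of increments, since the regime index sets are random functionals of the durations; but once $M_{nt_i}$ is replaced by the deterministic index $[nt_i/\mu]$, the increments become sums of the i.i.d.\ variables $W_kT_k$ over disjoint deterministic blocks, hence exactly independent, and the only remaining boundary issue is the $O_P(1)$ contribution of the length-biased straddling regime, which you already control via stationarity of the recurrence times.
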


\subsection{Empirical process of Taqqu-Levy's process}
In the case of Taqqu-Levy's process, the invariance principle
(i.e., the limit theorem for renormalized partial sums) can be
straightforwardly extended to an invariance principle for
instantaneous functions of the process: if $\phi$ is a measurable
function such that $\esp[\phi^2(W_1)]<\infty$ and $\esp[\phi(W_1)]
= 0$, then the finite dimensional distributions of $ \ell(n)^{-1}
n^{-1/\alpha} \sum_{k=1}^{[nt]} \phi(X_k)$ converge weakly to
those of an $\alpha$-stable Levy process, where $\ell$ is the same
slowly varying function as in proposition \ref{prop:invariance}.
For Parke's process, we conjecture that this is true for
polynomial functions.  It is actually shown in the case $\phi(x) =
x^2 - \esp[\epsilon_1^2]$ in Theorem \ref{theo:autocov}, and a
similar proof would probably work in the case of a higher order
polynomial.

In the special case of an indicator function, we obtain the usual
interval-indexed empirical process:
\[
\hat F_n(x) = \frac1n \sum_{k=1}^n \ind_{\{X_k \leq x\}}.
\]
Let  $F_W(x) = \pr(W_1 \leq x)$ be the distribution function  of
$W_1$. Then $\hat F_n$ is an estimator of $F_W$ and we have the
following  convergence result.
\begin{theo} \label{theo:empirique}
  The finite dimensional distributions of the process $\ell(n)^{-1}
  n^{1-1/\alpha}(\hat F_n - F_W)$ converges weakly to those of the
  process $\Lambda_\alpha(F(\cdot))$, where $\Lambda_\alpha$ is the
  stable Levy process with characteristic function
\[
\esp[\rme^{\rmi u \Lambda_\alpha(t)}] = \exp \left\{ - t \,
|u|^\alpha
  \mu^{-1} \Gamma(1-\alpha) \cos(\pi\alpha/2) (1 - \rmi \,
  \rm{sign}(u) \tan(\pi\alpha/2)) \right\},
\]
and $\ell(n) = n^{1/\alpha} \inf \left\{ t>0: \; \pr( T_1 > t) <
  n^{-1} \right\}$.
\end{theo}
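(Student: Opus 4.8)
The plan is to reduce the theorem to the instantaneous-function invariance principle for the Taqqu-Levy process stated just before the theorem (the analogue of Proposition~\ref{prop:invariance} for sums $\sum_k \phi(X_k)$ with $\esp[\phi(W_1)]=0$ and $\esp[\phi^2(W_1)]<\infty$), via the Cram\'er--Wold device. Writing $\psi_x(w)=\ind_{\{w\le x\}}-F_W(x)$, one has the exact identity $n(\hat F_n(x)-F_W(x))=\sum_{k=1}^n\psi_x(X_k)$, so that the statistic in the theorem, $\ell(n)^{-1}n^{1-1/\alpha}(\hat F_n(x)-F_W(x))$, equals $\ell(n)^{-1}n^{-1/\alpha}\sum_{k=1}^n\psi_x(X_k)$, i.e. the invariance-principle statistic evaluated at $t=1$ for the function $\psi_x$. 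For finitely many levels $x_1<\cdots<x_m$ and reals $c_1,\dots,c_m$, the linear combination $\sum_i c_i\,\ell(n)^{-1}n^{1-1/\alpha}(\hat F_n(x_i)-F_W(x_i))$ equals $\ell(n)^{-1}n^{-1/\alpha}\sum_{k=1}^n\phi(X_k)$ with $\phi=\sum_i c_i\psi_{x_i}$, which is bounded and satisfies $\esp[\phi(W_1)]=0$, hence falls within the scope of the invariance principle.

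Next I would apply that invariance principle at $t=1$ to conclude that $\ell(n)^{-1}n^{-1/\alpha}\sum_{k=1}^n\phi(X_k)$ converges to an $\alpha$-stable random variable, and, crucially, identify its parameters. Following the same point-process argument that underlies Proposition~\ref{prop:invariance} -- the rescaled occupation times of the regimes converge to a Poisson random measure, and each large regime carries an independent reward $W_j$ -- the limiting characteristic function is that of Proposition~\ref{prop:invariance} with $\esp[|\xi|^\alpha]$ replaced by $\esp[|\phi(W_1)|^\alpha]$ and $\beta$ replaced by $\beta_\phi=(\esp[\phi(W_1)_+^\alpha]-\esp[\phi(W_1)_-^\alpha])/\esp[|\phi(W_1)|^\alpha]$.

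The identification step is where the subordination by $F_W$ appears. Since $\phi$ is piecewise constant in its argument with break-points at the $x_i$, taking the value $C_\ell-\bar c$ on $(x_{\ell-1},x_\ell]$ with $C_\ell=\sum_{i\ge\ell}c_i$ and $\bar c=\sum_i c_iF_W(x_i)$, both $\esp[|\phi(W_1)|^\alpha]$ and $\esp[\phi(W_1)_\pm^\alpha]$ reduce to finite sums over these intervals weighted by the increments $F_W(x_\ell)-F_W(x_{\ell-1})$. I would then check that these sums are exactly the scale and skewness of the corresponding linear combination of $\Lambda_\alpha(F_W(x_i))$, where $\Lambda_\alpha$ has the characteristic function displayed in the theorem and is endowed with independent increments over disjoint $F_W$-intervals; the deterministic centering by $F_W$ contributes the term $\bar c\,\Lambda_\alpha(1)$ and is what forces the correct (bridge-type) dependence across levels. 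Matching the characteristic functions for every choice of the $c_i$ then yields, by Cram\'er--Wold, convergence of the finite-dimensional distributions to those of the limit process $\Lambda_\alpha(F(\cdot))$ of the theorem.

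The main obstacle is this parameter identification together with making the invariance principle quantitative enough to read off explicit stable parameters for a general bounded mean-zero $\phi$. Concretely this means re-examining the point-process convergence of the rescaled occupation times equipped with their independent marks $\phi(W_j)$, showing that the aggregate contribution of the short regimes is negligible once the deterministic centering $nF_W(x)$ is subtracted (this is where boundedness of $\psi_x$ and $1<\alpha<2$ enter), and verifying that the two incomplete regimes at the ends of the sample -- the initial regime governed by $S_0$ and the final truncated regime -- are asymptotically negligible. Granting these, the algebraic matching of characteristic functions in the identification step is routine.
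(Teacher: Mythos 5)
Your reduction is set up correctly -- the exact identity $n(\hat F_n(x)-F_W(x))=\sum_{k=1}^n\psi_x(X_k)$ and the Cram\'er--Wold framing are fine -- but the final identification step, where you assert that the stable parameters built from $\phi=\sum_i c_i\psi_{x_i}$ match those of $\sum_i c_i\Lambda_\alpha(F_W(x_i))$, fails. Carrying out your own recipe, the scale of the limit of $\ell(n)^{-1}n^{-1/\alpha}\sum_k\phi(X_k)$ is proportional to $\mu^{-1}\esp[|\phi(W_1)|^\alpha]=\mu^{-1}\sum_\ell|C_\ell-\bar c|^\alpha\{F_W(x_\ell)-F_W(x_{\ell-1})\}$ (including the top interval, where $\phi\equiv-\bar c$), whereas the linear combination $\sum_i c_i\Lambda_\alpha(F_W(x_i))$ of the independent-increment process in the statement has scale $\mu^{-1}\sum_\ell|C_\ell|^\alpha\{F_W(x_\ell)-F_W(x_{\ell-1})\}$, with no $\bar c$ correction. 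Already for $q=1$, $c_1=1$, your computation gives $F(1-F)^\alpha+(1-F)F^\alpha$ with $F=F_W(x_1)$, against $F$ for the theorem's limit. What your argument actually delivers are the finite-dimensional laws of the \emph{stable bridge} $\Lambda_\alpha(F_W(\cdot))-F_W(\cdot)\Lambda_\alpha(1)$ -- your own parenthetical remark that the centering contributes $\bar c\,\Lambda_\alpha(1)$ and forces ``bridge-type'' dependence concedes exactly this -- and the bridge is not the process $\Lambda_\alpha(F(\cdot))$ of the statement. So, read as a proof of the theorem as stated, the proposal breaks at the matching step; the claimed ``routine'' matching of characteristic functions cannot go through.

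The comparison with the paper's own (sketched) proof locates the discrepancy precisely. The paper decomposes $n(\hat F_n(x)-F_W(x))$ into $\sum_{k\le M_n}\ind_{\{W_k\le x\}}(T_k-\mu)$, which after Lemma~\ref{lem:remplace} converges to $\Lambda_\alpha(F_W(x))$; plus $\mu\sum_{k\le M_n}\{\ind_{\{W_k\le x\}}-F_W(x)\}$, killed by a variance bound and the renewal theorem ($O_P(n^{1/2})=o_P(n^{1/\alpha}\ell(n))$); plus $(\mu M_n-n)F_W(x)$, which the sketch writes down but never disposes of. Your $\phi$-sum is exactly the first and third terms combined, since $\mu M_n-n=-\sum_{k\le M_n}(T_k-\mu)+O_P(1)$; and by the very result of Embrechts, Kl\"uppelberg and Mikosch invoked in the proof of Lemma~\ref{lem:remplace}, $\mu M_n-n$ is of exact order $n^{1/\alpha}\ell(n)$ with a nondegenerate stable limit, so the third term is \emph{not} negligible. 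A sanity check shows your bridge is in fact the correct limit: whenever $F_W(x)=1$ one has $\hat F_n(x)\equiv1$, so the limit must vanish there, which the bridge does and $\Lambda_\alpha(F_W(\cdot))$ does not. You should therefore either present your argument as establishing the corrected limit $\Lambda_\alpha(F_W(\cdot))-F_W(\cdot)\Lambda_\alpha(1)$, flagging the discrepancy with the statement, or, to defend the statement literally, you would have to show the $(\mu M_n-n)F_W(x)$ contribution vanishes, which it does not. One further caution: the instantaneous-function invariance principle you lean on is only asserted in Section 3.5, with Theorem~\ref{theo:empirique} offered by the paper as its indicator-function instance, so invoking it wholesale is circular relative to the paper's logic; the marked point-process argument you defer to the end is the part that would actually have to be proved, and is essentially the paper's direct computation.
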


 \section{Asymptotics for the DFTs}
 Define
\[
D_{n,j} = \sum_{t=1}^{n} X_t \, \rme^{\rmi t x_{j}},
\]
where $X$ denotes either Taqqu-Levy's or Parke's process.
\subsection{Low frequencies} \label{sec:lowfreq}
 \begin{prop} \label{prop:lowfreq}
   Define $d_{n,j}$ = $\sum_{k=1}^{[n/\mu]} \eta_k \rme^{\rmi \mu k
     x_{j}}$ with $\eta_k=T_kW_k$ for the Taqqu-Levy process and
   $\eta_k = n_k \epsilon_k$ and $\mu=1$ for Parke's process.  If
   \eqref{eq:regvarrenewal} and \eqref{eq:regvar} hold and if $j\leq
   n^\rho$ for some $\rho \in (0,1-1/\alpha)$, then $\ell(n)^{-1}
   n^{-1/\alpha} (D_{n,j}-d_{n,j}) = o_P(1)$, where
   $\ell$ is defined as in Proposition~\ref{prop:invariance}.
\end{prop}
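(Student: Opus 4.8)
The plan is to compute $D_{n,j}$ exactly, to recognize $d_{n,j}$ as its leading part, and to split the remainder into a small number of error sums, each controlled by a first- or second-moment estimate together with Karamata's theorem. For the Taqqu-Levy process I would group the terms of $D_{n,j}$ by regimes: on the $k$-th regime $X_t=W_k$, so that regime's contribution is a geometric sum, and since $S_k=S_{k-1}+T_k$ the contribution of a regime lying entirely in $[1,n]$ equals $W_k(\rme^{\rmi S_kx_j}-\rme^{\rmi S_{k-1}x_j})/(\rme^{\rmi x_j}-1)$. Writing $R_k=S_{k-1}-\mu k$, the difference between this term and the matching term $T_kW_k\rme^{\rmi\mu k x_j}$ of $d_{n,j}$ decomposes into three pieces: a secant correction $(A_k)=\bigl((\rme^{\rmi x_j}-1)^{-1}-(\rmi x_j)^{-1}\bigr)(\rme^{\rmi S_kx_j}-\rme^{\rmi S_{k-1}x_j})$; a geometric-versus-linear error $(B_k)=\rme^{\rmi S_{k-1}x_j}\bigl((\rme^{\rmi T_kx_j}-1)/(\rmi x_j)-T_k\bigr)$; and a renewal-epoch error $(C_k)=T_k(\rme^{\rmi S_{k-1}x_j}-\rme^{\rmi\mu k x_j})$. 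The two boundary regimes (the one containing $S_0$ and the one straddling $n$) and the discrepancy between the random renewal count $M_n$ and $[n/\mu]$ are collected into separate edge terms. For the Parke process the same scheme applies shock by shock, with $n_s$, $\epsilon_s$ and the \emph{deterministic} birth date $s$ playing the roles of $T_k$, $W_k$ and $S_{k-1}$; crucially, determinism of the birth dates removes the analogue of $(C_k)$, which is why the Parke case is the easier of the two.

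I would dispose of $(A_k)$, $(B_k)$ and the edge terms by absolute ($L^1$) bounds. Since $(\rme^{\rmi x_j}-1)^{-1}-(\rmi x_j)^{-1}$ is bounded and $|\rme^{\rmi S_kx_j}-\rme^{\rmi S_{k-1}x_j}|\le\min(2,T_kx_j)$, Karamata's theorem ($\alpha>1$) gives $\esp\sum_k|W_k|\,|A_k|=O(j)=o(\ell(n)n^{1/\alpha})$, using $\rho<1-1/\alpha<1/\alpha$. For $(B_k)$, the inequality $|\rme^{\rmi y}-1-\rmi y|\le\min(y^2/2,2|y|)$ and Karamata applied to $\esp[\min(T_1^2x_j,\,T_1)]$ yield $\esp\sum_k|W_k|\,|B_k|=O\bigl(L(n/j)\,j^{\alpha-1}n^{2-\alpha}\bigr)$, and requiring this to be $o(\ell(n)n^{1/\alpha})$ is \emph{exactly} the hypothesis $\rho<1-1/\alpha$; this is where the constraint on $\rho$ first becomes binding. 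The edge terms involve the initial and straddling durations (for Parke, the shocks born before time $1$ that survive into the sample), which are size-biased and hence have tail index $\alpha-1<1$; bounding each truncated geometric sum by $\min(\,\cdot\,,C/x_j)$ and summing produces a contribution of order at most $n^{2-\alpha}L(n)$, which is $o(\ell(n)n^{1/\alpha})$ for every $\alpha\in(1,2)$ because $2-\alpha<1/\alpha$. The count discrepancy contributes a sum of about $n^{1/\alpha}\ell(n)$ heavy-tailed terms, whose scale $n^{1/\alpha^2}$ is $o(n^{1/\alpha})$.

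The main obstacle is the renewal-epoch term $(C_k)$ in the Taqqu-Levy case, for which an absolute bound provably fails: not using the centering of the $W_k$, it overestimates the sum by a positive power of $n$. The remedy is to exploit that the $W_k$ are i.i.d., centered and independent of $\sigma(S_0,\{T_k\})$, so that the sum $\sum_k W_k\,(C_k)$ has, conditionally on the durations, variance $\sigma_W^2\sum_k T_k^2\,|\rme^{\rmi S_{k-1}x_j}-\rme^{\rmi\mu k x_j}|^2$. Because $\esp[T_1^2]=\infty$ I must truncate, writing $T_k=T_k\ind_{\{T_k\le b_n\}}+T_k\ind_{\{T_k> b_n\}}$ with $b_n$ the $1/n$-quantile of $T_1$. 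For the truncated part I pass to the conditional second moment and insert the key estimate $\esp|\rme^{\rmi S_{k-1}x_j}-\rme^{\rmi\mu k x_j}|^2=2\,\esp[1-\cos(R_kx_j)]=O\bigl(k\,x_j^{\alpha}L(1/x_j)+x_j^{\alpha-1}L(1/x_j)\bigr)$, which follows from the characteristic-function expansion $|1-\esp[\rme^{\rmi u(T_1-\mu)}]|=O(|u|^{\alpha}L(1/|u|))$ together with the telescoping bound $|1-\esp[\rme^{\rmi uR_k}]|\le k\,|1-\esp[\rme^{\rmi u(T_1-\mu)}]|+|1-\esp[\rme^{\rmi u(S_0-\mu)}]|$ (the $S_0$ factor carries the lighter index $\alpha-1$). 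Combined with $\esp[T_1^2\ind_{\{T_1\le b_n\}}]=O(b_n^{2-\alpha}L(b_n))$, the truncated conditional variance is $o\bigl((\ell(n)n^{1/\alpha})^2\bigr)$ precisely when $\rho<1-1/\alpha$, and Chebyshev's inequality finishes it. The tail part is handled in $L^1$: using $\esp[T_1\ind_{\{T_1>b_n\}}]=O(b_n\pr(T_1>b_n))$ together with a \emph{fractional} moment $\esp|R_k|^{\beta}=O(k^{\beta/\alpha}\ell(k)^{\beta})$ for $\beta<\alpha-1$ (ordinary moments being unavailable, since $S_0$ has infinite mean), one again obtains $o(\ell(n)n^{1/\alpha})$ under the same constraint. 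Assembling the pieces gives $\ell(n)^{-1}n^{-1/\alpha}(D_{n,j}-d_{n,j})=o_P(1)$.
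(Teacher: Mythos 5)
Your proposal is correct and binds at the right threshold $\rho<1-1/\alpha$ at every step, but it travels a genuinely different technical road from the paper. The paper's proof is modular and built on $p$-th moment inequalities with $p\in(1,\alpha)$: Lemma \ref{lem:remplace} (Kolmogorov plus Burkholder over a window $|m-n/\mu|\leq\delta_n n^{1/\alpha}\ell(n)$) for the count discrepancy between $M_n$ and $[n/\mu]$; Lemma \ref{lem:approx1} (conditional $p$-th moment via concavity of $x\mapsto x^{p/2}$, then Karamata) for your geometric-versus-linear term $(B_k)$; and Lemma \ref{lem:approx2} for your renewal-epoch term $(C_k)$, where instead of truncating, the paper Taylor-expands, writing $K(S_{k-1}x_{n,j})-K((k-1)\mu x_{n,j})$ as $K'(\cdot)R_{k-1}x_{n,j}$ plus an $S_0$ term, and applies Burkholder to the martingale $\sum_k \rho_{n,k}R_{k-1}\zeta_k$ with $\esp[|R_k|^p]=O(k)$, obtaining $O_P(jn^{2/p-1})$. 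The boundary regimes are dispatched more cheaply than in your argument: $r_{1,n,j}$ is bounded by $|W_0|S_0=O_P(1)$, and $r_{2,n,j}$ is $O_P(1)$ because the forward recurrence time $n-S_{M_n}$ has the same fixed marginal law as $S_0$ (stationarity of the renewal process); in Parke's case the edge terms are $O_P(1)$ by Borel--Cantelli, since only finitely many prehistoric or straddling shocks contribute. Your substitutes all work: the plain $L^1$ bound $O(L(n/j)j^{\alpha-1}n^{2-\alpha})$ for $(B_k)$ is correct and shows the paper's finer $p$-th moment machinery is not strictly needed there; your truncation of $T_k$ at the $1/n$-quantile, with the conditional-variance computation driven by $\esp[1-\cos(R_kx_j)]=O(kx_j^{\alpha}L(1/x_j)+x_j^{\alpha-1}L(1/x_j))$ (the centered analogue of the paper's Theorem \ref{theo:fonctioncharacteristique}, adapted from Yong (1971)) and fractional moments of order $\beta<\alpha-1$ for the tail, is a valid alternative to Lemma \ref{lem:approx2}; and your remark that an absolute bound on $(C_k)$ must fail is right (indeed $\esp[S_0]=\infty$ already kills a first-moment bound on $R_k$). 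What the paper's route buys is reusability --- its lemmas deliver uniform $o_P(n^{1/\alpha}h(n))$ statements for arbitrary slowly varying $h$ that are invoked again in the proofs of Theorems \ref{theo:highfreq} and \ref{theo:autocov}; what yours buys is a more elementary, self-contained argument (Karamata, Chebyshev, characteristic functions) that avoids martingale maximal inequalities almost everywhere.

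Almost everywhere, but not quite: your count-discrepancy step is only heuristic as written. Because $M_n$ is random, the observation that ``a sum of about $n^{1/\alpha}\ell(n)$ heavy-tailed terms has scale $n^{1/\alpha^2}$'' applies to a window of fixed endpoints, not directly to $\sum_{k=1}^{M_n}-\sum_{k=1}^{[n/\mu]}$; you need a maximal inequality over all $m$ with $|m-n/\mu|\leq\delta_n n^{1/\alpha}\ell(n)$, applied to the centered summands $T_kW_k\rme^{\rmi\mu kx_j}$ (martingale differences, with finite $p$-th moments for $p<\alpha$). This is exactly what the paper's Lemma \ref{lem:remplace} supplies via Kolmogorov's and Burkholder's inequalities; your conclusion $n^{1/\alpha^2}=o(n^{1/\alpha})$ is the right one, and the fix is standard, but the step should be made explicit.
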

Since in both cases $\eta_k$ belongs to the domain of attraction
of an $\alpha$-stable law, Proposition \ref{prop:lowfreq} implies
if $j\leq n^\rho$, then $\ell(n)^{-1} n^{-1/\alpha} D_{n,j}$
converges to a stable distribution. The conclusion of Proposition
\ref{prop:lowfreq} continues to hold if the upper bound $n^\rho$
is replaced by $c n^\rho$, where $c$ is a positive constant. In
the case of fixed frequencies, we can describe more precisely the
asymptotic distribution of the suitably normalized DFT
coefficients.

\begin{theo} \label{theo:lowfreq}
  Let $j_1<\dots<j_q$ be $q$ fixed positive integers. Let $\ell$ be
  defined as in Proposition \ref{prop:invariance}. Then $\ell(n)^{-1}
  n^{-1/\alpha} (D_{n,j_1},\dots,D_{n,j_q})$ converge in law to the
  complex $\alpha$-stable vector $\left(\int_0^1 \rme^{2 \rmi \pi j_1
      s} d\Lambda_\alpha(s),\dots, \int_0^1 \rme^{2 \rmi \pi j_q s}
    d\Lambda_\alpha(s)\right)$, where $\Lambda_\alpha$ is the
  $\alpha$-stable Levy process with characteristic function given
  by \eqref{eq:cflevy}.

\end{theo}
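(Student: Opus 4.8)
The plan is to reduce the assertion to a functional ($J_1$) stable limit theorem for the partial sums of the i.i.d.\ sequence $(\eta_k)$ and then to extract the joint limit by a continuous-mapping argument after an integration by parts. First I would apply Proposition~\ref{prop:lowfreq}: each fixed $j_\ell$ eventually satisfies $j_\ell \leq n^\rho$, so $\ell(n)^{-1}n^{-1/\alpha}(D_{n,j_\ell}-d_{n,j_\ell}) = o_P(1)$, and by Slutsky's lemma it suffices to prove the claimed convergence for $\ell(n)^{-1}n^{-1/\alpha}(d_{n,j_1},\dots,d_{n,j_q})$, where $d_{n,j} = \sum_{k=1}^{m}\eta_k\,\rme^{2\rmi\pi j\mu k/n}$ and $m=[n/\mu]$. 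Because $\mu k/n - k/m = O(1/n)$ uniformly over $k\leq m$ while $\ell(n)^{-1}n^{-1/\alpha}\sum_{k=1}^m|\eta_k| = O_P(1)$ (the $|\eta_k|$ being regularly varying of index $\alpha<2$), I can replace each phase $\rme^{2\rmi\pi j\mu k/n}$ by $\rme^{2\rmi\pi jk/m}$ at the cost of an $o_P(1)$ term. Setting $\Psi_n(s) = \ell(n)^{-1}n^{-1/\alpha}\sum_{k=1}^{[ms]}\eta_k$, the normalised quantity $\ell(n)^{-1}n^{-1/\alpha}d_{n,j}$ then equals, up to $o_P(1)$, the Riemann--Stieltjes integral $\int_0^1\rme^{2\rmi\pi js}\,d\Psi_n(s)$.

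The second step is to show that $\Psi_n$ converges weakly in $\mcd([0,1])$, equipped with the $J_1$ topology, to the stable L\'evy process $\Lambda_\alpha$ of~\eqref{eq:cflevy}. In both models the $\eta_k$ are i.i.d.\ products of independent factors and, by Breiman's lemma, lie in the domain of attraction of an $\alpha$-stable law whose scale and skewness are exactly those of~\eqref{eq:cflevy}; the one-dimensional (indeed finite-dimensional) limit is the content of Proposition~\ref{prop:invariance}, and the upgrade to process convergence is the classical heavy-tailed invariance principle, for which tightness in $J_1$ is standard.

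The decisive step is to pass the integral through this convergence. The naive route --- invoking continuity of $\Psi\mapsto\int_0^1\phi_j\,d\Psi$ --- is not available, since stochastic integration is not $J_1$-continuous; so instead I would integrate by parts. With $\phi_j(s)=\rme^{2\rmi\pi js}$, summation by parts together with a Riemann-sum approximation (whose error is $o_P(1)$ since $\sup_s|\Psi_n(s)|=O_P(1)$) gives
\begin{equation*}
\int_0^1\phi_j\,d\Psi_n = \phi_j(1)\,\Psi_n(1) - \int_0^1\Psi_n(s)\,\phi_j'(s)\,ds + o_P(1),
\end{equation*}
and the same identity holds pathwise in the limit for the smooth integrand $\phi_j$. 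The functional $F_j:\Psi\mapsto\phi_j(1)\Psi(1)-\int_0^1\Psi(s)\phi_j'(s)\,ds$ is continuous on all of $\mcd([0,1])$ for the $J_1$ topology: endpoint evaluation is continuous because $J_1$ time-changes fix $s=1$, and the Lebesgue integral against the fixed continuous weight $\phi_j'$ is continuous by bounded convergence (a $J_1$-convergent sequence is uniformly bounded and converges off the countable jump set). Applying the continuous-mapping theorem to the $\Cset^q$-valued map $(F_{j_1},\dots,F_{j_q})$, all driven by the single process $\Psi_n\Rightarrow\Lambda_\alpha$, yields the joint convergence of the vector to $\bigl(\int_0^1\phi_{j_1}\,d\Lambda_\alpha,\dots,\int_0^1\phi_{j_q}\,d\Lambda_\alpha\bigr)$, which is the assertion. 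The main obstacle is precisely this third step: one must recognise that the stochastic-integral functional is discontinuous in $J_1$ and recast it, via integration by parts, through everywhere-continuous functionals of $\Psi_n$, while checking that the endpoint and Lebesgue-integral terms behave well at the jumps of the limit.
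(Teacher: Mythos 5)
Your argument is correct in substance, but note first that the paper contains no written proof of Theorem~\ref{theo:lowfreq} at all: the authors remark, immediately after Proposition~\ref{prop:lowfreq}, that since $\eta_k$ is i.i.d.\ in the domain of attraction of an $\alpha$-stable law, the reduction $D_{n,j}=d_{n,j}+o_P(\ell(n)n^{1/\alpha})$ already yields stable limits, and they leave the identification of the joint limit as the vector of stable integrals implicit. The evidently intended route is the direct one: treat $d_{n,j}=\sum_{k\leq[n/\mu]}\eta_k\,\rme^{\rmi\mu k x_j}$ as a weighted sum of i.i.d.\ heavy-tailed variables and compute characteristic functions of real linear combinations (Cram\'er--Wold), in the spirit of their proof of Proposition~\ref{prop:invariance}, which avoids any tightness considerations. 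Your route --- upgrade the partial-sum convergence to a $J_1$ invariance principle, rewrite each normalized $d_{n,j}$ by summation by parts as $\phi_j(1)\Psi_n(1)-\int_0^1\Psi_n(s)\phi_j'(s)\,ds$, and apply the continuous mapping theorem --- is heavier (it requires $J_1$ tightness, classical for i.i.d.\ sums in a stable domain of attraction but nowhere used in the paper), yet it delivers the joint convergence over $j_1,\dots,j_q$ automatically and correctly negotiates the genuine trap you identified: $\Psi\mapsto\int_0^1\phi\,d\Psi$ is not $J_1$-continuous, and $\int_0^1\phi_j\,d\Lambda_\alpha$ cannot be read as a pathwise Stieltjes integral since $\Lambda_\alpha$ has infinite variation for $\alpha>1$, so the integration-by-parts formula must serve simultaneously as the continuous functional and as the identification of the limit. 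Your continuity argument for $F_j$ (endpoint evaluation is $J_1$-continuous because time changes fix $s=1$; the Lebesgue-integral term is continuous by uniform boundedness and convergence off the countable jump set) is sound.

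One slip: the intermediate claim $\ell(n)^{-1}n^{-1/\alpha}\sum_{k=1}^m|\eta_k|=O_P(1)$ is false, since $\esp[|\eta_1|]\in(0,\infty)$ and the law of large numbers makes this quantity of exact order $n^{1-1/\alpha}/\ell(n)\to\infty$. The phase-replacement step survives anyway: the phase discrepancy is $O(j/n)$ uniformly in $k\leq m$, so the error is at most $C(j/n)\sum_{k\leq m}|\eta_k|=O_P(j)$, which for fixed $j$ is indeed $o_P(\ell(n)n^{1/\alpha})$; the bound should be stated this way.
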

\subsection{High frequencies} \label{sec:highfreq}
In the high frequency case, the asymptotic behaviour of the
discrete Fourier transform is the same as it is for linear series.
\begin{theo}  \label{theo:highfreq}
  If let $j$ be a non decreasing sequence of integers such that
  $j/n\to0$ and $j \geq n^\rho$ for some $\rho \in (1-1/\alpha,1)$,
  then $(2\pi n f(x_{j}))^{-1/2}D_{n,j}$ is asymptotically complex
  Gaussian with independent real and imaginary parts, which are each
  zero mean Gaussian with variance $1 /2$.
\end{theo}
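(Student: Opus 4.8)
The plan is to treat both processes simultaneously through the common feature that, conditionally on the durations, the DFT is a weighted sum of independent shocks whose weights are tamed by the oscillation $\rme^{\rmi t x_j}$. Writing out Parke's process and exchanging the order of summation,
\[
D_{n,j} = \sum_{s\le n}\epsilon_s\,G_s,\qquad G_s = \sum_{t=s\vee1}^{(s+n_s)\wedge n}\rme^{\rmi t x_j},
\]
with $G_s=0$ when the survival window does not meet $\{1,\dots,n\}$; for the Taqqu--Levy process the same representation holds with $\epsilon_s$ replaced by the rewards $W_k$ and $G_s$ by the geometric sum over the $k$-th regime. Conditionally on the duration sequence $(n_s)$ (resp.\ on the renewal sequence $\{S_k\}$), the $\epsilon_s$ (resp.\ $W_k$) are i.i.d.\ with mean $0$ and variance $\sigma_\epsilon^2$, while the complex weights $G_s$ are fixed and \emph{independent across} $s$. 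The decisive observation is the bound $|G_s|\le\min(n_s+1,\,\pi/x_j)$: summing a full period of $\rme^{\rmi t x_j}$ produces exact cancellation, so a shock of enormous duration contributes at most $O(1/x_j)=O(n^{1-\rho})$ to the transform. This is precisely why the heavy tails that drive the stable limit at low frequencies are neutralised here, and Gaussianity can emerge.

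Next I would identify the conditional second-order structure. Splitting $G_s=A_s+\rmi B_s$ with $A_s=\re(G_s)$, $B_s=\im(G_s)$, the conditional covariance matrix of $(\re D_{n,j},\im D_{n,j})$ is $\sigma_\epsilon^2$ times $\big(\sum_s A_s^2,\ \sum_s B_s^2,\ \sum_s A_sB_s\big)$, and these combine as $\sum_s(A_s^2+B_s^2)=\sum_s|G_s|^2$, $\sum_s(A_s^2-B_s^2)=\re\sum_s G_s^2$, $2\sum_s A_sB_s=\im\sum_s G_s^2$. Thus everything reduces to two scalar quantities. For the modulus, $\sigma_\epsilon^2\,\esp[\sum_s|G_s|^2]=\esp[\,|D_{n,j}|^2\,]=\sum_{t,t'=1}^n\gamma(t-t')\rme^{\rmi(t-t')x_j}$, and the regular variation of $\gamma$ (equivalently the spectral asymptotics in Propositions~\ref{prop:Xstationnaire} and~\ref{prop:WellDefined}) gives $\esp[\,|D_{n,j}|^2\,]\sim 2\pi n f(x_j)$ as $j\to\infty$, $j/n\to0$ --- the asymptotic unbiasedness of the periodogram away from the origin, matched to the stated normalisation. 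For the conjugate term, $\sigma_\epsilon^2\,\esp[\sum_s G_s^2]=\esp[D_{n,j}^2]=\sum_{t,t'}\gamma(t-t')\rme^{\rmi(t+t')x_j}$; the phase $\rme^{\rmi(t+t')x_j}$ supplies an extra factor $\sum_s\rme^{2\rmi s x_j}=O(1/x_j)$ under summation, which a short computation shows is of size $O(x_j^{\alpha-3}L(1/x_j))=o(nf(x_j))$ exactly because $nx_j\to\infty$. Hence the real and imaginary parts have asymptotically equal variances and vanishing covariance, yielding the factor $1/2$.

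I would then upgrade these expectations to convergence in probability. Since the $G_s$ are independent, both $\sum_s|G_s|^2$ and $\sum_s G_s^2$ are sums of independent bounded terms, so $\var(\sum_s|G_s|^2)\le\sum_s\esp[|G_s|^4]$; using $|G_s|\le\min(n_s+1,\pi/x_j)$ and $\pr(N\ge k)=L(k)k^{-\alpha}$, the truncated moments satisfy $\esp[|G_s|^2]\asymp x_j^{\alpha-2}L(1/x_j)$ and $\esp[|G_s|^4]\asymp x_j^{\alpha-4}L(1/x_j)$, so the relative variance is of order $\big(nx_j^{\alpha-4}L\big)/\big(nx_j^{\alpha-2}L\big)^2=1/(nx_j^{\alpha}L(1/x_j))$. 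Since $x_j\ge 2\pi n^{\rho-1}$, one has $nx_j^{\alpha}\ge c\,n^{1-(1-\rho)\alpha}\to\infty$ \emph{precisely when} $\rho>1-1/\alpha$, which delivers the required concentration of the conditional variance. Conditionally on the durations, $(\re D_{n,j},\im D_{n,j})$ is a sum of independent mean-zero bivariate vectors, and I would invoke the Lindeberg--Feller CLT. The Lindeberg ratio is controlled by the same negligibility quantity $\max_s|G_s|^2/\sum_s|G_s|^2\asymp x_j^{-\alpha}/(nL)$, so only finite variance of $\epsilon_0$ (resp.\ $W_1$) is needed --- no higher moments --- and feeding in the covariance limits gives, conditionally, the bivariate Gaussian limit $\mathcal N(0,\tfrac12 I_2)$.

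The final step is to remove the conditioning. Because the conditional limit law $\mathcal N(0,\tfrac12 I_2)$ does not depend on the durations, the conditional characteristic function of $(2\pi n f(x_j))^{-1/2}(\re D_{n,j},\im D_{n,j})$ converges in probability to $\exp\{-(u^2+v^2)/4\}$; it is bounded by $1$, so dominated convergence yields convergence of the unconditional characteristic function to the same limit, giving the announced complex Gaussian law with independent $N(0,1/2)$ real and imaginary parts. The main obstacle is the pair of spectral computations in the second step: extracting the exact constant from $\esp[\,|D_{n,j}|^2\,]\sim 2\pi n f(x_j)$ and showing $\esp[D_{n,j}^2]=o(nf(x_j))$, both of which require careful Abelian/regular-variation estimates of truncated moments of the durations together with control of the boundary shocks born at $s\le0$ and those truncated at $t=n$; once these scales are pinned down, the appearance of the threshold $\rho=1-1/\alpha$ in both the concentration and the Lindeberg step is automatic, and the remainder is standard.
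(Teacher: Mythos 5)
Your treatment of Parke's process is essentially correct and genuinely different from the paper's: there, the proof applies the Wold device to the unconditionally independent summands $\xi_{n,s}(a,b)$ and verifies a Lyapunov condition \eqref{eq:liapounov} with some $q>2$, computing $\sigma_n^2(a,b)$ through the Abelian estimate of Lemma \ref{lem:sv} and matching constants via \eqref{eq:spectraldensity}; you instead condition on the durations, concentrate the conditional covariance matrix by Chebyshev, apply Lindeberg--Feller conditionally, and uncondition by dominated convergence of characteristic functions. For Parke this route even buys something: since $|G_s|\leq \min(n_s+1,\pi/x_j)$ and $\max_s |G_s|/\sigma_n \to 0$ deterministically at the relevant scale, your conditional Lindeberg condition requires only $\esp[\epsilon_0^2]<\infty$, whereas the paper's Lyapunov step implicitly uses a moment of order $q>2$. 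Your identification of the normalizing constant through $\esp[|D_{n,j}|^2]\sim 2\pi n f(x_j)$ is a legitimate substitute for the paper's direct computation, and you correctly flag it as the place where the regular-variation estimates live.

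The genuine gap is in the Taqqu--Levy half, where you assert that the weights are ``independent across $s$'' and then bound $\var\bigl(\sum_k |G_k|^2\bigr) \leq \sum_k \esp[|G_k|^4]$, and similarly for $\sum_k G_k^2$. For the renewal-reward process the weight of the $k$-th interior regime is $G_k = \rme^{\rmi S_{k-1}x_j}\,(1-\rme^{\rmi T_k x_j})/(1-\rme^{\rmi x_j})$, which depends on $S_{k-1}=S_0+T_1+\cdots+T_{k-1}$; the $G_k$ are strongly dependent through these shared partial sums, so the Chebyshev step fails as written. The modulus part survives by accident, because $|G_k|^2 = \sin^2(T_k x_j/2)/\sin^2(x_j/2)$ depends only on $T_k$ and hence is i.i.d.\ (modulo the random number of regimes $M_n$, which itself needs an argument such as the paper's Lemma \ref{lem:remplace}). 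But $\sum_k G_k^2$ carries the dependent phases $\rme^{2\rmi S_{k-1}x_j}$, and showing $\sum_k G_k^2 = o_P(nf(x_j))$ \emph{in probability} (which your conditional CLT requires, not merely in expectation) demands that these phases equidistribute; this rests on characteristic-function decay of the form $|\phi(z)|^2 \leq 1-2\ell(z)z^\alpha$, i.e., exactly the content of the paper's Theorem \ref{theo:fonctioncharacteristique} (adapted from Yong) as used in Lemma \ref{lem:marche} — and it is precisely why the paper treats the Taqqu--Levy summands as martingale differences with respect to $\mcf_k$ and verifies the conditional Lindeberg conditions \eqref{eq:lindeberg1}--\eqref{eq:lindeberg2} rather than invoking independence. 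The gap is repairable: the mixed terms $\esp[G_k^2\overline{G_{k'}^2}]$ factor through $|\phi(2x_j)|^{k-k'}$ and decay geometrically once the hypothesis $n\ell(x_j)x_j^\alpha\to\infty$ (equivalent to $\rho>1-1/\alpha$) is in force, but this characteristic-function machinery is absent from your outline, and without it the Taqqu--Levy case is unproved.
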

The conclusion of Theorem \ref{theo:highfreq} continues to hold if
the lower bound $n^\rho$ is replaced by $c n^\rho$, where $c$ is a
positive constant.

\section{Asymptotics for the Sample ACF}
The empirical autocovariance is often used as a diagnostic for
long memory, hence it is of importance to investigate its
properties in the present context. For $n\geq 1$ and $k\geq0$,
define $\bar X_n = n^{-1} \sum_{k=1}^n X_k$ and
\begin{align}
  \label{eq:defempiricalautocovariance}
  \hat \gamma_n(k) = n^{-1} \sum_{t=1}^{n-k} (X_t -\bar X_n) (X_{t+k}
  - \bar X_n).
\end{align}
Since in both cases, $X$ is a second order stationary process,
$\hat \gamma_n(k)$ is an asymptotically unbiased estimator of
$\gamma(k) = \cov(X_0,X_k)$.  In the next proposition, we show
that it is also a consistent estimator and obtain its rate of
convergence and asymptotic distribution.
\begin{theo} \label{theo:autocov}
  Assume that  \eqref{eq:regvarrenewal} and \eqref{eq:regvar}
  hold,  $\esp[|\epsilon_0|^q]<\infty$ and  $\esp[|W_0|^q]<\infty$ for
  some   $q>2\alpha$. Denote $\xi_s = W_s^2 \{T_s - \esp[T_1]\}$
  or $\xi_s = \epsilon_s^2 \{n_s - \esp[n_1]\}$. Let $\mu=1$ in the
  case of Parke's process.
Then for any fixed $k\geq 0$ and any slowly varying function $h$,
\begin{gather} \label{eq:degenere}
  \hat \gamma_n(k) - \gamma(k) = \frac1n \sum_{s=1}^{[n/\mu]} \xi_s +
  o_P(h(n)n^{1/\alpha-1}).
\end{gather}
Define $\ell$ as in Proposition \ref{prop:invariance}. Then
$\ell(n)^{-1} n^{1-1/\alpha}   (\hat \gamma_n(k)-\gamma(k))$
converges weakly to an  $\alpha$-stable
  random variable $\zeta$ with characteristic function
 \[
  \esp[\rme^{ \rmi u \zeta}] =  \exp \left\{ -  \, |u|^\alpha \, m_\alpha
  \,   \Gamma(1-\alpha) \cos(\pi\alpha/2) (1 - \rmi \,   \rm{sign}(u) \tan(\pi\alpha/2))
  \right\},
  \]
  with $m_\alpha = \esp[|\epsilon_1|^{2\alpha}]$ in the case of
  Parke's process and $m_\alpha = \esp[|W_1|^{2\alpha}]/\mu$ in the case of
the Taqqu-Levy process.
\end{theo}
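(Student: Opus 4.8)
The plan is to reduce the study of $\hat\gamma_n(k)-\gamma(k)$ to the analysis of a single sum of i.i.d.\ heavy-tailed summands $\xi_s$, and then to invoke the classical stable limit theorem. I would organize the argument around establishing the key approximation \eqref{eq:degenere}, since once that is in hand the stable convergence follows almost mechanically.

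First I would handle the centering by $\bar X_n$. Writing out \eqref{eq:defempiricalautocovariance} and expanding the product, the mean-correction terms contribute quantities built from $\bar X_n$ and partial sums of $X$. By Proposition~\ref{prop:invariance}, $\bar X_n = O_P(\ell(n) n^{1/\alpha-1})$, so the cross terms involving $\bar X_n$ are of order $\ell(n)^2 n^{2/\alpha - 2}$, which is $o_P(\ell(n) n^{1/\alpha-1})$ precisely because $1/\alpha < 1$. Hence the centering is asymptotically negligible at the stable rate, and I may replace $\hat\gamma_n(k)$ by the uncentered version $n^{-1}\sum_{t=1}^{n-k} X_t X_{t+k}$ up to $o_P(\ell(n)n^{1/\alpha-1})$.

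The heart of the proof is to show that the uncentered sample autocovariance, after subtracting $\gamma(k)$, is well approximated by $n^{-1}\sum_{s} \xi_s$. For the Parke model, $X_t X_{t+k} = \sum_{s,s'\leq t} g_{s,t} g_{s',t+k}\,\epsilon_s\epsilon_{s'}$; summing over $t$, the diagonal terms $s=s'$ produce $\sum_s \epsilon_s^2 \sum_t g_{s,t}g_{s,t+k}$, and the inner count of indices $t$ for which shock $s$ survives to both $t$ and $t+k$ is $(n_s+1-k)\ind_{\{n_s\geq k\}}$ up to boundary effects. Centering $\epsilon_s^2$ and $n_s$ and matching against the covariance formula from Proposition~\ref{prop:WellDefined} is what generates $\xi_s = \epsilon_s^2\{n_s-\esp[n_1]\}$. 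The off-diagonal terms $s\neq s'$, together with the $\epsilon_s^2$-centering fluctuations, must be shown to be $o_P(h(n)n^{1/\alpha-1})$ for every slowly varying $h$; here the moment hypothesis $\esp[|\epsilon_0|^q]<\infty$ with $q>2\alpha$ is exactly what is needed to control the variance of these negligible pieces and push them below the stable rate. The Taqqu-Levy case is analogous, with $X_tX_{t+k}=W_{k}^2$ on a regime, and the number of time points contributing a given $W_s^2$ being governed by $T_s$, yielding $\xi_s = W_s^2\{T_s-\esp[T_1]\}$.

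The main obstacle will be the off-diagonal and centering-error estimates in this reduction step: one must bound second (or $q/2$-th) moments of the negligible remainder sharply enough that even multiplication by an arbitrary slowly varying $h(n)$ leaves it $o_P(n^{1/\alpha-1})$, which forces a genuine gain of a power of $n$ rather than merely a slowly varying factor. This is where a truncation argument, splitting $\epsilon_s^2$ or $W_s^2$ at a level growing like a power of $n$ and handling large values by a direct union bound using the $q$-th moment, is likely to be needed. Once \eqref{eq:degenere} holds, I would finish by observing that $\xi_s$ is i.i.d.\ and, since $n_s$ (resp.\ $T_s$) is regularly varying with index $\alpha$ and $\epsilon_s^2$ (resp.\ $W_s^2$) has finite moment of order $q/2 > \alpha$, the product $\xi_s$ lies in the domain of attraction of an $\alpha$-stable law with scale determined by $\esp[|\epsilon_1|^{2\alpha}]$ (resp.\ $\esp[|W_1|^{2\alpha}]/\mu$); the stated characteristic function then follows from the standard generalized CLT for sums of i.i.d.\ variables in a stable domain of attraction, with the rate $\ell(n)n^{1/\alpha-1}$ coming directly from the normalization in Proposition~\ref{prop:invariance}.
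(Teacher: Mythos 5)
Your plan reproduces the paper's proof skeleton faithfully: the same dismissal of the mean correction via Proposition \ref{prop:invariance} (so that the correction terms are $O_P(\ell^2(n)n^{2/\alpha-2})$), the same diagonal/off-diagonal decomposition of $n^{-1}\sum_t X_tX_{t+k}$, the same identification of the diagonal count $(n_s+1-k)\ind_{\{n_s\geq k\}}$ matched against the covariance formula, and the same finish via Breiman's theorem and the classical stable CLT for i.i.d.\ sums. Two technical guesses are off, though harmlessly so: no truncation is needed for the negligible pieces --- the paper kills the off-diagonal Parke remainder by a bare second-moment computation whose key point is that for $s\neq s'$ the overlap is at most $n_s\wedge n_{s'}$, and $N\wedge N'$ has tail index $2\alpha>2$, hence is square-integrable; this yields $r_n(k)=O_P(n^{1-\alpha})$, which beats $h(n)n^{1/\alpha-1}$ for every slowly varying $h$ precisely because $(\alpha-1)^2>0$. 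Relatedly, $q>2\alpha$ is not what controls the off-diagonal variance (finite $\sigma_\epsilon^2$ suffices there, since cross terms contribute $\sigma_\epsilon^4$); it is used for the Marcinkiewicz-type rate $O_P(n^{2/q-1})=o_P(h(n)n^{1/\alpha-1})$ of the centering fluctuation $n^{-1}\sum_s(\epsilon_s^2-\sigma_\epsilon^2)$, and to validate Breiman's theorem with tail constant $\esp[|\epsilon_1|^{2\alpha}]$, as you correctly use at the end.

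The genuine gap is the sentence ``The Taqqu-Levy case is analogous.'' It is not: there the diagonal term is $n^{-1}\sum_{j=1}^{M_{n-k}}W_j^2(T_j-k)\ind_{\{T_j\geq k\}}$, a sum over a \emph{random} number of regimes given by the renewal counting process, whereas \eqref{eq:degenere} asserts an approximation by a sum over the deterministic range $s\leq[n/\mu]$. Since $M_n-n/\mu$ fluctuates at the stable order $n^{1/\alpha}\ell(n)$ and the summands $\xi_j$ are themselves heavy-tailed with infinite variance, replacing $M_{n-k}$ by $[n/\mu]$ at the precision $o_P(n^{1/\alpha}h(n))$ for \emph{every} slowly varying $h$ is a real step; the paper isolates it as Lemma \ref{lem:remplace}, which combines the stable fluctuation bound for $M_n$ (so that $|M_n-n/\mu|\leq\delta_n n^{1/\alpha}\ell(n)$ with high probability) with Kolmogorov/Burkholder maximal inequalities for the martingale $\sum_j\xi_j$ over that random window, exploiting $\esp[|\xi_j|^p]<\infty$ for $p<\alpha$ so the window sum is $O_P\bigl((\delta_n n^{1/\alpha}\ell(n))^{1/p}\bigr)=o_P(n^{1/\alpha})$. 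Your proposal contains no mechanism for this random-index replacement, and nothing in your Parke-side argument supplies one (there the shock indices $s=1,\dots,n$ are deterministic), so as written the Taqqu-Levy half of \eqref{eq:degenere} does not follow from your plan.
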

\begin{rem}
  The $o_P$ term in \eqref{eq:degenere} is not uniform with respect to $k$,
  but \eqref{eq:degenere} implies that for any
  fixed integers $q$,   $k_1,\dots,k_q$, the asymptotic distribution
  of the vector
  $\ell(n)^{-1} n^{1-1/\alpha}   [\hat \gamma_n(k_1)-\gamma(k_1),\dots,\hat
  \gamma_n(k_q)-\gamma(k_q)]$ is that of an $\alpha$-stable vector
  whose components are equal. Thus, the joint limiting
  distribution of a finite collection of standardized sample
  autocovariances at fixed lags is degenerate.
\end{rem}
\begin{rem}
  Since we assume that the shocks $\epsilon_t$ or rewards $W_k$ have zero mean,
  the process $X$ itself has zero mean for both models. If it is
  known to the data analyst that the mean is zero then no mean
  correction is needed in the sample autocovariances. However, in
  practice this knowledge is rarely assumed, so we have presented our
  results for the mean corrected version.
\end{rem}
\begin{rem}
The conclusions of Theorem \ref{theo:autocov} hold also for
$\ell(n)^{-1}
  n^{1-1/\alpha} (\hat\rho_n(k) - \rho(k))$ where $\hat \rho_n(k)$
  and $\rho(k)$ are the sample and population autocorrelations at
  lag $k$. This non-Gaussian limiting distribution
  (as well as the degeneracy described above) for the
  standardized sample autocorrelations will clearly affect the
  asymptotic properties of parametric method-of-moments estimators which are based
  on a finite number of sample autocorrelations. See, for example,
  Tieslau, Schmidt and Baillie (1996).
  \end{rem}

\section{Simulations}
Throughout this section, we denote the long-memory parameter by $d
\in (0,0.5)$. Note that $d=H-1/2=1-\alpha/2$. In all of our
simulations, we use a sample size of $n=10000$. We chose to use
$ARFIMA(0,d,0)$ autocovariances in our simulations because they
are nonnegative and monotone non-increasing, which is consistent
with the nonnegative and non-increasing autocovariances implied by
both the Taqqu-Levy and Parke models.  Let $\gamma(t)$ be the
autocovariance sequence of an $ARFIMA(0,d,0)$ process,
\begin{gather} \label{eq:autocov}
  \gamma(t) =
  \frac{\Gamma(t+d)\Gamma(1-2d)}{\Gamma(t-d+1)\Gamma(1-d)\Gamma(d)} \
  \sigma^2_0, \ t=0,1,\ldots
\end{gather}
where $\sigma_0$ is the standard deviation of the $ARFIMA$
innovations.  For the integer-valued interarrival time $S_0$ as
well as the $\{T_k\}$ in the Taqqu-Levy process and the survival
times $\{n_s\}$ in the Parke process, we use the following
simulation algorithm : Let $X$ denote either $S_0$, $T_k$ or $n_s$
and let $G(x)=P(X \ge x)$. We can simulate an observation $x$ of
$X$ by drawing an observation $u$ of a uniform random variable and
setting $x$ to be the integer such that
\begin{gather} \label{eq:G(x)}
  G(x)\ge u > G(x+1).
\end{gather}
In all cases we consider here, $G(x)$ is expressed in terms of the
Gamma function, so that there is an easily evaluated continuous
increasing function $\tilde{G}(x)$ which is equal to $G(x)$ for
all integer values at which $G(x)$ is defined. The solution to
(\ref{eq:G(x)}) can be written as
\begin{gather} \label{eq:inverseG}
  x=\lfloor\tilde{G}^{-1}(u)\rfloor,
\end{gather}
where $\lfloor x \rfloor$ denotes the greatest integer less than
$x$. We obtain the solution $x$ to \eqref{eq:inverseG} using a
simple bisection algorithm (see, eg,  Johnson and Riess 1977 page
115).

\subsection{Simulation of Taqqu-Levy Process}
Before describing our sampling algorithm, we provide some
convenient formulas for $P(S_0 \ge t)$ and $P(T_k \ge t)$. From
\eqref{eq:SoEq} and Proposition \ref{prop:Xstationnaire}, we have
\begin{gather*}
\mu=\frac{1}{P(S_0=0)} \mbox{ and } \sigma^2_W=\gamma(0)
\end{gather*}
and
\begin{gather} \label{eq:pdfS0(2)}
  P(S_0 \ge t) = \frac{\gamma(t)}{\gamma(0)}, \ t=0,1,2,\ldots
\end{gather}
Thus, for $t\geq 1$, we have:
\begin{align}  \nonumber
  P(T_k \geq t) & = \mu P(S_0=t-1) = \frac{P(S_0=t-1)}{P(S_0=0)} \\
  & = \frac{\pr(S_0 \geq t-1) - \pr(S_0 \geq t)}{\pr(S_0 \geq 0) -
    \pr(S_0 \geq 1)} = \frac{\gamma(t-1) - \gamma(t)}{\gamma(0) -
    \gamma(1)}. \label{eq:Tk}
\end{align}
For all of our simulations of the Taqqu-Levy process, we assume
that $\sigma^2_0=1$. From (\ref{eq:pdfS0(2)}) and (\ref{eq:Tk}),
we can sample $S_0$ and $\{T_k\}$ using the bisection algorithm.
We also simulate $iid$ normal random variables ${W_k}$ with mean
zero and variance $\sigma^2_W=\gamma(0)$, independent of $S_0$ and
$\{T_k\}$. The duration of the 0th regime is $S_0$ and the
duration of the $k$th regime is $T_k$ for $k \ge 1$. The value of
the series $X_t$ is constant at $W_k$ throughout the $k$th regime.
This yields the simulated realization $X_0, \ldots, X_{n-1}$.
Occasionally, the entire simulated realization was constant, as
there were no breaks before $n-1$.  Such realizations were
discarded.

\subsection{Simulation of Parke's Process}
By Proposition \ref{prop:WellDefined}, Parke's process is well
defined if and only if with probability one, for all $t$, there is
a finite number of shocks surviving at time $t$. This allows us to
simulate a process which is distributionally equivalent to Parke's
using only a finite sum
\begin{gather} \label{eq:Parke}
  X_t=\sum_{s=-J}^{t}g_{s,t}\epsilon_s, \ t=1,2,\ldots
\end{gather}
where $-J$ is the time index of the oldest shock that survives at
time $t=0$.  The non-negative integer-valued random variable $J$
has a probability distribution
\begin{gather} \label{eq:J}
  P(J\leq j)=\prod_{k=j+1}^{\infty}(1-p_k).
\end{gather}
In order to obtain the covariances \eqref{eq:autocov}, for $0 < d
< 1/2$, the survival probabilities are defined by (see Parke,
1999)
\begin{gather} \label{eq:pk_gamma}
  p_k = \frac{\Gamma(2-d)}{\Gamma(d)}
  \frac{\Gamma(k+d)}{\Gamma(k+2-d)}, \ k=0,1,2,\ldots
\end{gather}
For each realization of Parke's process, we start by sampling $J$
from the probability distribution determined by (\ref{eq:J})
truncated to the range $(0,1,2,\ldots, 10000)$. This was adequate
for the values of $d$ considered here, $d=0.1$ and $d=0.4$, since
the sum of the probabilities up to that truncation point is
extremely close to one in both cases. Next, we generate a sequence
of standard normal shocks $\{\epsilon_s\}^{n}_{s=-J}$ . The
innovation variance $\sigma^2_0$ of the $ARFIMA(0,d,0)$ process is
related to $\sigma^2_\epsilon$ (we have $\sigma^2_\epsilon = 1$)
by
\begin{gather} \label{eq:sigma}
  \sigma^2_0 =
  \frac{\Gamma(1-d)\Gamma(2-d)}{\Gamma(2-2d)}\sigma^2_\epsilon.
\end{gather}
Next we discuss the simulation of the $\{n_s\}$ sequence. Special
attention must be paid to the survival time $n_{-J}$ for the
oldest shock $\epsilon_{-J}$.  It is not sampled from the
probability distribution determined by $\{p_k\}$, but rather from
the conditional distribution
\begin{gather} \label{eq:conPk}
  P(N \ge i |N \ge J) = \frac{p_i}{p_J}, \ i\ge J.
\end{gather}
We apply the bisection algorithm to sample $n_{-J}$ and the other
$\{n_s\}_{s=-J+1}^{n}$ from (\ref{eq:pk_gamma}) and
(\ref{eq:conPk}). Using the values $\{n_s\}_{s=-J}^{-1}$, we
compute the "death time" for each prehistoric shock
$\{\epsilon_s\}_{s=-J}^{-1}$ . At each time $t \ge 0$, there may
be some past shocks dying, so the time series $X_t$ is generated
by adding a new shock to the previous value $X_{t-1}$ and
subtracting the sum of those shocks dying at time $t$.

\subsection{Simulation Results}
We performed Monte Carlo simulations to assess the finite sample
properties of the DFT coefficients in light of Theorems
\ref{theo:lowfreq} and \ref{theo:highfreq} for both the Taqqu-Levy
and Parke processes. We generated 500 replications of length
$n=10000$ in each case.  Recall that $ d=1-\frac{1}{2}\alpha $,
and $ 1 < \alpha < 2$. We used autocovariances corresponding to an
$ARFIMA(0,d,0)$ model as described earlier, with $d=0.1$ and
$d=0.4$. For each value of $d$, the normalized Fourier
coefficients were evaluated at frequency $x_j$ with $ j= 1, 2,
\lfloor n^{0.2}\rfloor, \lfloor n^{0.4}\rfloor, \lfloor
n^{0.6}\rfloor, \lfloor n^{0.8}\rfloor,
\lfloor\frac{n}{2}\rfloor-2, \lfloor\frac{n}{2}\rfloor-1 $. For
the Taqqu-Levy process with $d=0.4$, there were $60$ constant
realizations. We excluded these constant realizations from our
analysis, while keeping the number of realizations used at $500$.

Figures \ref{fig:qqplot1}-\ref{fig:qqplot2} present the normal
Quantile-Quantile (QQ) plots of the normalized Fourier cosine
coefficients $A_j/{f(x_j)}^{\frac{1}{2}}$ for the Parke process
with $d=0.1$ and $d=0.4$, where
\begin{gather}
  A_j = \frac{1}{(2\pi n)^{1/2}}\sum_{t=0}^{n-1}x_t\cos(x_j t).
\end{gather}
The number inside the parenthesis at the bottom of each QQ plot
represents the $p$-value for the Anderson-Darling test of
normality. According to Theorems \ref{theo:lowfreq} and
\ref{theo:highfreq}, if $j$ increases sufficiently quickly with
the sample size $n$, i.e. when $j\ge n^{\rho}$ for $\rho >
1-1/\alpha$, the normalized Fourier coefficients are
asymptotically normal. Furthermore, as $d$ increases, the value of
$\alpha$ will decrease, and the condition on the rate of increase
of $j$ to ensure asymptotic normality becomes less stringent. When
$d=0.1$, we have $1-1/\alpha=0.4444$, a number larger than
$1-1/\alpha=0.1667$ when $d=0.4$. For the Parke process with
$d=0.1$, we do not reject the hypothesis of normality for $j \ge
n^{0.4}$; while $d=0.4$, we reject the hypothesis of normality for
$j < n^{0.2}$. Thus our simulation results are essentially
consistent with the results of Theorems \ref{theo:lowfreq} and
\ref{theo:highfreq}. We found similar results for the Taqqu-Levy
process. Since the results for the normalized Fourier sine
coefficient
\begin{gather}
  B_j = \frac{1}{(2\pi n)^{1/2}}\sum_{t=0}^{n-1}x_t\sin(x_j t)
\end{gather}
are very similar to those we found here, we do not present them
here.

Figure \ref{fig:scatterplot1} presents scatterplots of the average
log normalized periodogram vs. $\log|2\sin(x_j/2)|$ at the Fourier
frequencies from $j=1, \ldots, 4999$. We would expect a horizontal
line across all frequencies if
$E\left[\log\frac{I(x_j)}{f(x_j)}\right]$ is constant for all $j$.
The plots indicate that at low Fourier frequencies, the average
log normalized periodogram is changing but approaches a constant
as $j$ increases. If $I(x_j)/f(x_j)$ were distributed as
$(1/2)\chi_2^2$ as would be the case for a Gaussian white noise
process, we would have $E\left[I(x_j)/f(x_j)
\right]=-\gamma=-0.577216$ in Figure \ref{fig:scatterplot1}. There
seems to be some evidence that the log normalized periodogram is
biased upward for the Taqqu-Levy process with $d=0.4$, but not for
the other situations considered. Note that since the DFT
coefficients converge weakly to an $\alpha$ stable law at fixed
low Fourier frequencies, we should expect higher variability of
the log normalized periodogram at these frequencies. This suggests
that if we regress $\{\log(I(x_j))\}$ on $\{\log(f(x_j))\}$
without trimming a set of low Fourier frequencies, we may get a
biased and/or highly variable GPH estimator. Further evidence is
given in Figure \ref{fig:scatterplot2}, which presents
scatterplots of the average of $\log(I(x_j))$ vs. $\log
2|\sin(x_j/2)|$ together with their fitted least-squares lines. We
also found that there are several outliers at low frequencies for
both processes with $d=0.1$ as well as $d=0.4$. However, there are
more outliers in the case of $d=0.1$ for both processes. The fact
that the normalized periodogram behaves differently at the low
Fourier frequencies may present a problem for the GPH estimator if
we include all Fourier frequencies. The presence of the low
outliers found above is not surprising since under DDLRD, for
fixed $j$, $I(x_j) /f(x_j) =o_p(1)$ for fixed $j$ under DDLRD. See
Comment 3 in Section 7.

Figure \ref{fig:qqplotacf} presents normal QQ plots for the sample
autocorrelations based on the Taqqu-Levy process with $d=0.1$. The
Anderson-Darling $p$-values are extremely small so we reject the
null hypothesis of normality in all cases. Furthermore, the plots
indicate long-tailed distributions. These findings do not
contradict Theorem \ref{theo:autocov} which states that the
autocovariances for both processes will converge to an
$\alpha$-stable law. We found similar results for the Taqqu-Levy
process with $d=0.4$ as well as the Parke process for both values
of $d$.

Tables \ref{tab:table1} and \ref{tab:table2} present simulation
variances of the normalized DFT cosine coefficients and the
corresponding normal-based $95 \%$ confidence intervals for the
true variance, $\sigma^2$. We do not reject the null hypothesis
that $\sigma^2=0.5$ for any $j$ when $d=0.1$ in the Taqqu-Levy
process, but when $d=0.4$, we reject the null hypothesis for
$j=n/2-1$. For the Parke process, we accept the null hypothesis
for all Fourier frequencies with both values of $d$ except for
$j=n^{0.2}$ in the case $d=0.1$. Thus the results are essentially
consistent with the theoretical variances stated in Theorem
\ref{theo:highfreq}.

\section{Concluding remarks; topics for future research}

\begin{enumerate}
\item The main theoretical results we have obtained for the Parke
and Taqqu-Levy models are strikingly similar. Also, it seems clear
that the class of processes having DDLRD is much larger than the
two processes we have considered in this paper. A specific example
of another such process is the random coefficient autoregression
studied in Leipus and Surgailis (2002). We have so far been unable
to find an overarching unification for DDLRD processes which would
allow the development of a single set of theoretical results that
applies to the entire class, although such a unification seems
desirable, and may well be possible.

\item In Robinson (1995a), the theory of a modified GPH estimator
was developed for Gaussian long-memory processes. One aspect of
the modification was that an increasing number of low frequencies
were trimmed (omitted) before constructing the estimate.
Subsequently Hurvich, Deo and Brodsky (1998), who also assumed
Gaussianity, showed that trimming can be avoided. More recently,
Hurvich, Moulines and Soulier (2002) showed that trimming can also
be avoided in a different log-periodogram regression estimator,
assuming a linear, potentially non-Gaussian series. For linear
series, it is known that the DFT at fixed $j$ is asymptotically
normal (Terrin and Hurvich, 1994), but that the periodogram is
asymptotically neither independent, identically distributed, nor
exponentially distributed (K\"unsch 1986, Hurvich and Beltrao
1993). Simulations, mostly from Gaussian long-memory series,
indicate that trimming yields a very modest bias reduction, while
inflating the variance of the GPH estimator substantially. (See
also Deo and Hurvich 2001, in the context of LMSV models).

In contrast, the results of the present paper indicate that if the
long memory is generated by DDLRD, then trimming of low
frequencies may in fact be desirable. The DFT at fixed $j$
converges in distribution to an infinite-variance stable
distribution, but (under a different normalization; see Comment 3
below) if $j$ is allowed to increase suitably quickly a limiting
normal distribution results. It is unclear at this moment whether
trimming is needed to establish the asymptotic normality of the
GPH estimator based on a process having DDLRD, but clearly the
failure to trim low frequencies may adversely affect the
finite-sample behavior of the GPH estimator. Paradoxically, the
larger $d$ is, the less stringent the conditions on the rate of
increase of $j$ to ensure asymptotic normality. This seems to
indicate that when $d$ is larger less trimming would be needed,
both in theory and in practice. This runs counter to the effects
studied by Hurvich and Beltrao (1993) (which concern only the
second order structure of the process) which imply that the bias
of the normalized periodogram increases as $d$ increases from
zero.

In any case, it should be stressed that we have not attempted to
derive in this paper any asymptotic properties for the GPH
estimator or any other estimator of the long memory parameter
under DDLRD. We leave this as a topic for future research.

\item There is as yet no reliable way to distinguish between
linear long memory and DDLRD on the basis of an observed data set.
Some of our theoretical results may ultimately prove helpful in
this regard, but we leave this as a topic for future research. We
note here that the low-frequency periodogram ordinates, normalized
by the spectral density, are asymptotically normal under linear
long memory models, but converge in probability to zero under
DDLRD. This latter result follows since from our Proposition
\ref{prop:lowfreq}, we obtain after a short calculation that for
fixed $j$, $I(x_j)/f(x_j) = O_p (2/\alpha+\alpha-3)$, so that
$I(x_j)/f(x_j)=o_p(1)$. It also follows from Proposition
\ref{prop:lowfreq} that in the DDLRD case under a different
normalization, the periodogram $I(x_j)$ is asymptotically stable
if $j$ is fixed.

\item It is known (see Chung 2002 and the references therein) that
for a long-memory process linear in martingale differences, the
autocovariances are asymptotically normal if $d<1/4$, but converge
to a non-normal, finite-variance distribution if $d \in
(1/4,1/2)$. So the asymptotics for the sample autocovariances
depend on $d$, which is an undesirable property from the point of
view of statistical inference. Davis and Mikosch (1998) have shown
that for short-memory ARCH and GARCH models, the asymptotic
properties of the sample autocorrelations are more severe, as
there is no convergence in distribution. Now, for DDLRD, the
behavior is somewhere in between the linear long memory and
ARCH/GARCH cases, since for DDLRD the sample autocorrelations do
converge in distribution for all $d$ with $0<d<1/2$, but the
limiting distribution has infinite variance, and depends on $d$.
Thus, the properties of parametric estimators of $d$ which use a
fixed number of sample autocovariances will be strongly affected
by the presence of DDLRD.

\end{enumerate}

\newpage

\begin{figure}\caption{
   QQ Plots of the Normalized Fourier Cosine
    Coefficients $A_j/{f(\omega_j)}^{\frac{1}{2}}$ for Parke process;
    n=10000, d=0.1}
\begin{center}
  \includegraphics[height=18cm] {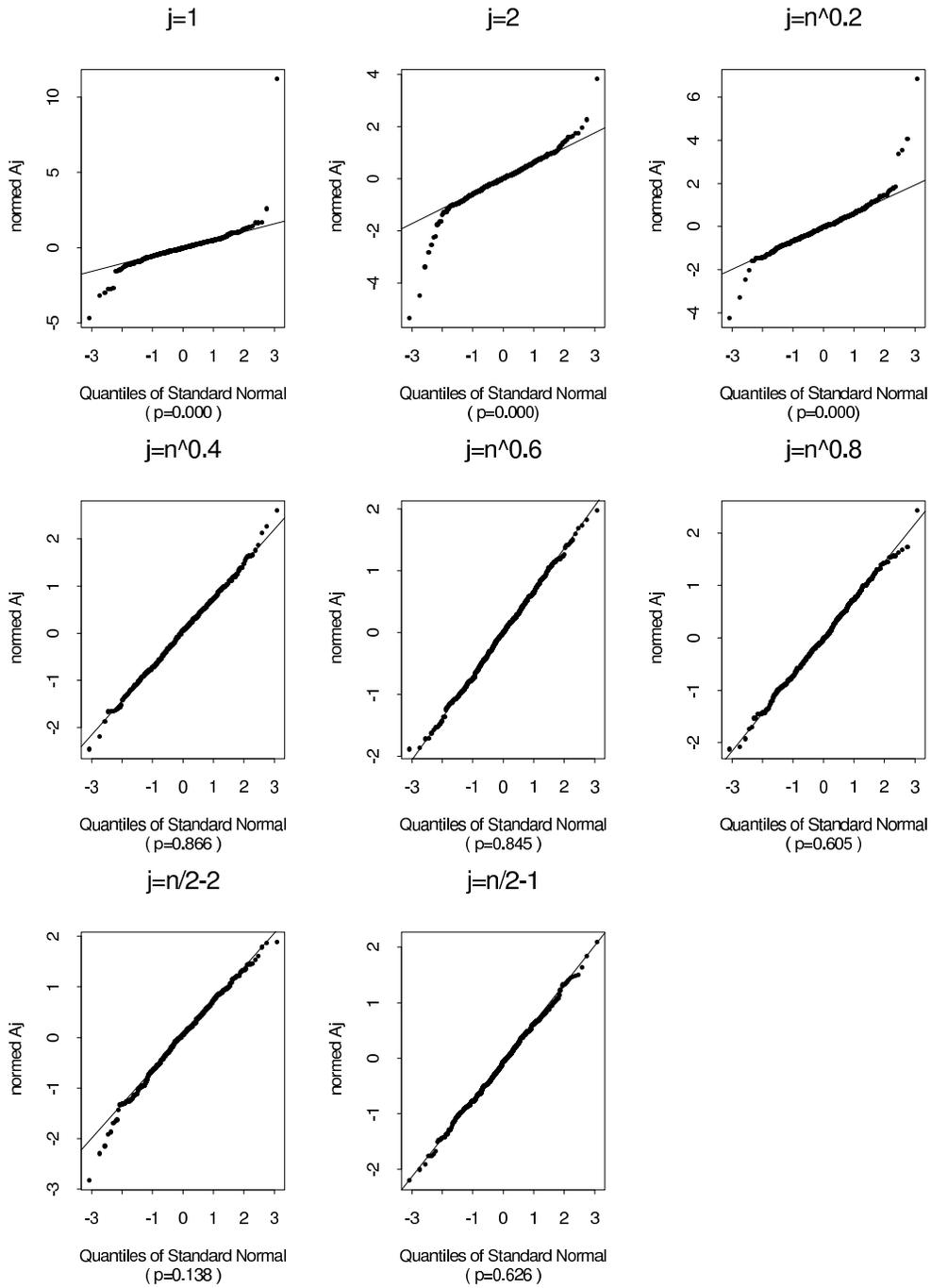}

\end{center}
 \label{fig:qqplot1}
\end{figure}
\clearpage
\begin{figure}
  \caption{QQ Plots of the Normalized Fourier Cosine
    Coefficients $A_j/{f(\omega_j)}^{\frac{1}{2}}$ for Parke process;
    n=10000, d=0.4}
    \label{fig:qqplot2}
\begin{center}
  \includegraphics[height=18cm] {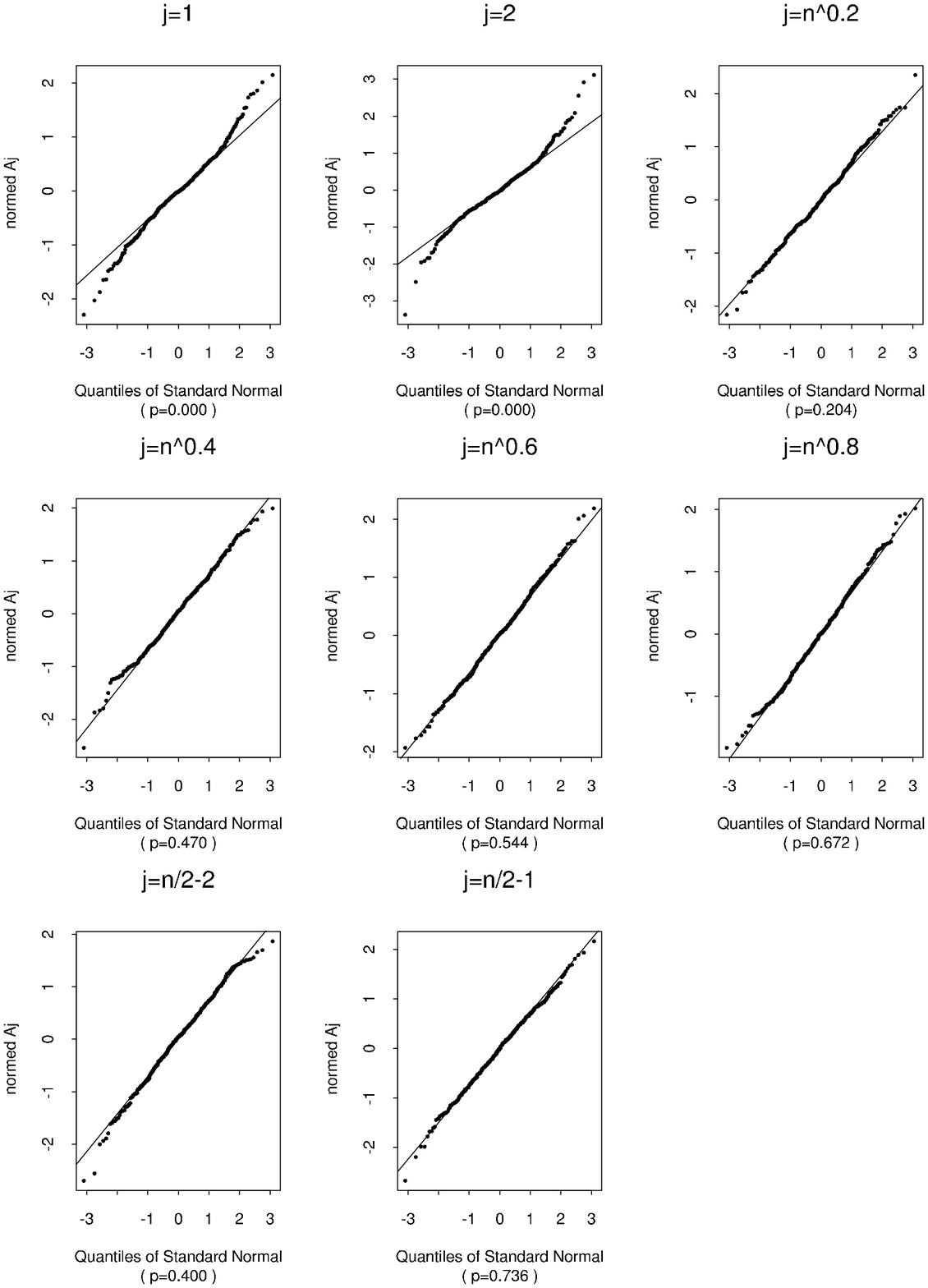}
\end{center}
\end{figure}
\clearpage
\begin{figure}
\caption{Scatterplots of Average Log Normalized
    Periodogram vs. $\log|2\sin(x_j/2)|$; j=1,2,...,4999 . Horizontal
    line represents $-\gamma=-0.577216$ .}
    \label{fig:scatterplot1}
\begin{center}
  \includegraphics[height=18cm] {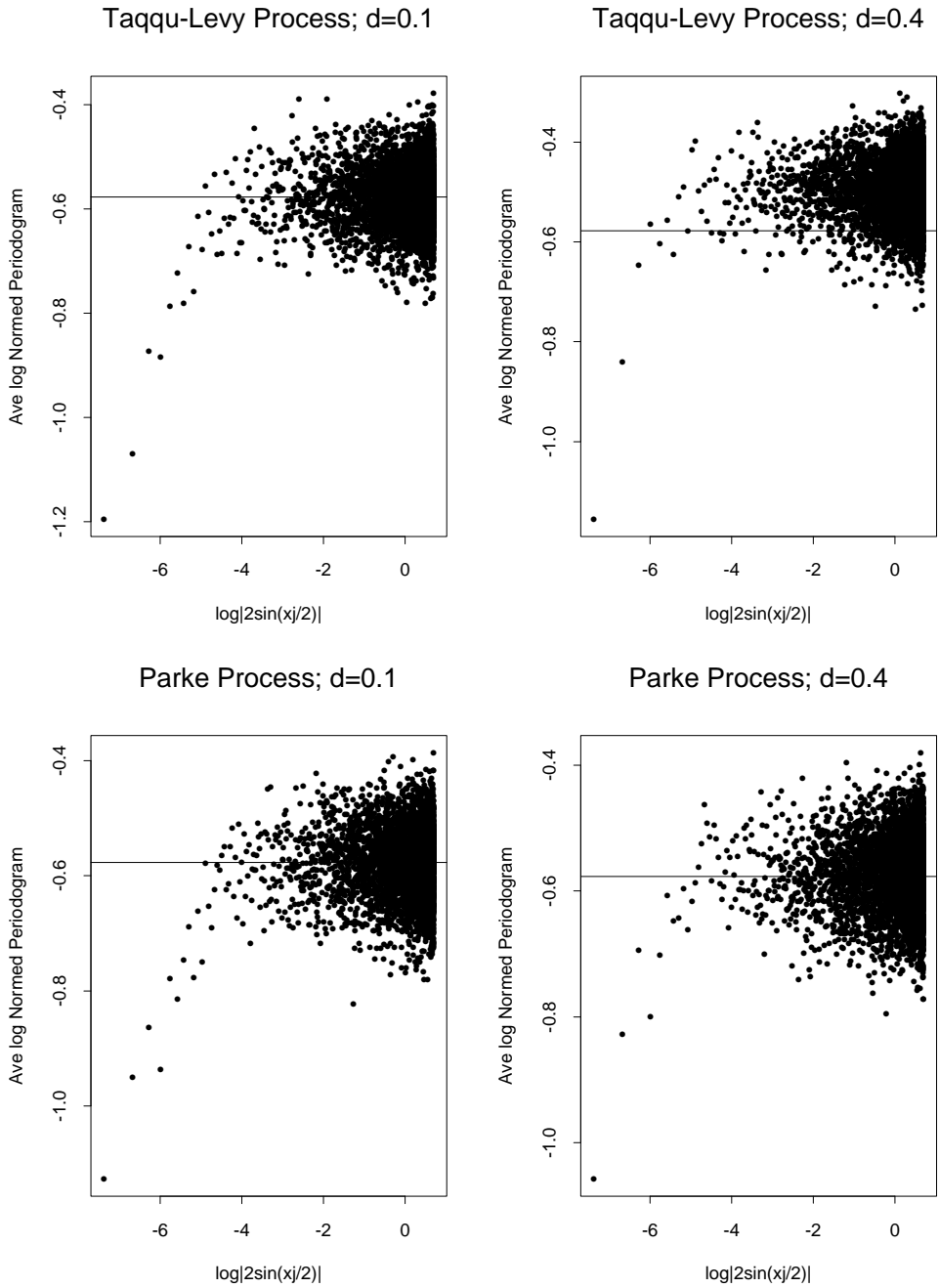}
\end{center}
\end{figure}
\clearpage
\begin{figure}
  [ptb]
\begin{center}
  \caption{Scatterplots of Average Log Periodogram vs.
    $\log|2\sin(x_j/2)|$; j=1,2,...,4999}
\label{fig:scatterplot2}
     \includegraphics[
  natheight=10.9324in, natwidth=8.6588in, height=7.27in, width=6.73in]
  {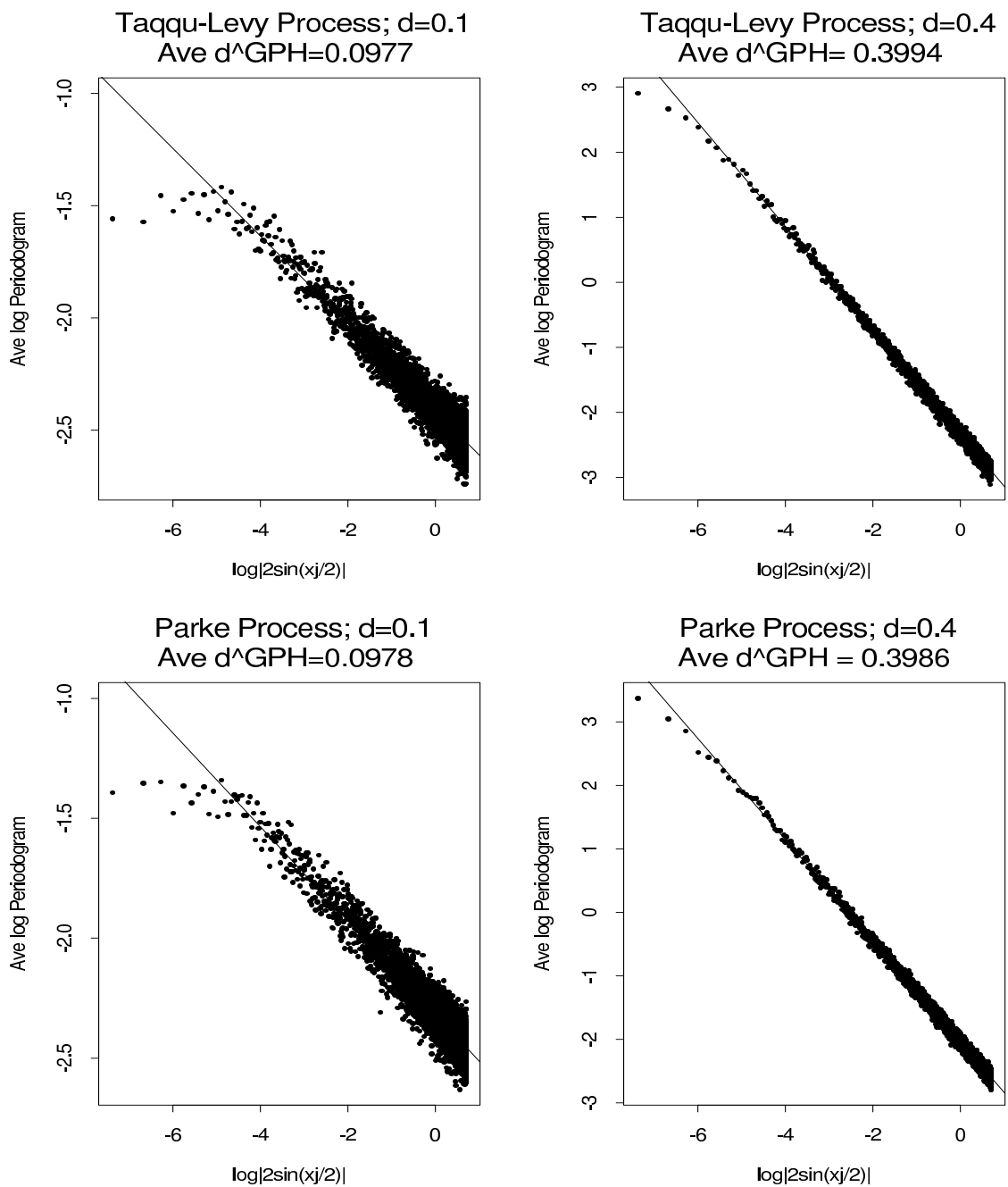}
\end{center}
 \end{figure}
\clearpage
\begin{figure}
[ptb]
\begin{center}
  \caption{Normal QQ Plots of Sample Autocorrelations for
    Taqqu-Levy Process, $d$=0.1 }
    \label{fig:qqplotacf}
    \includegraphics[height=18cm]{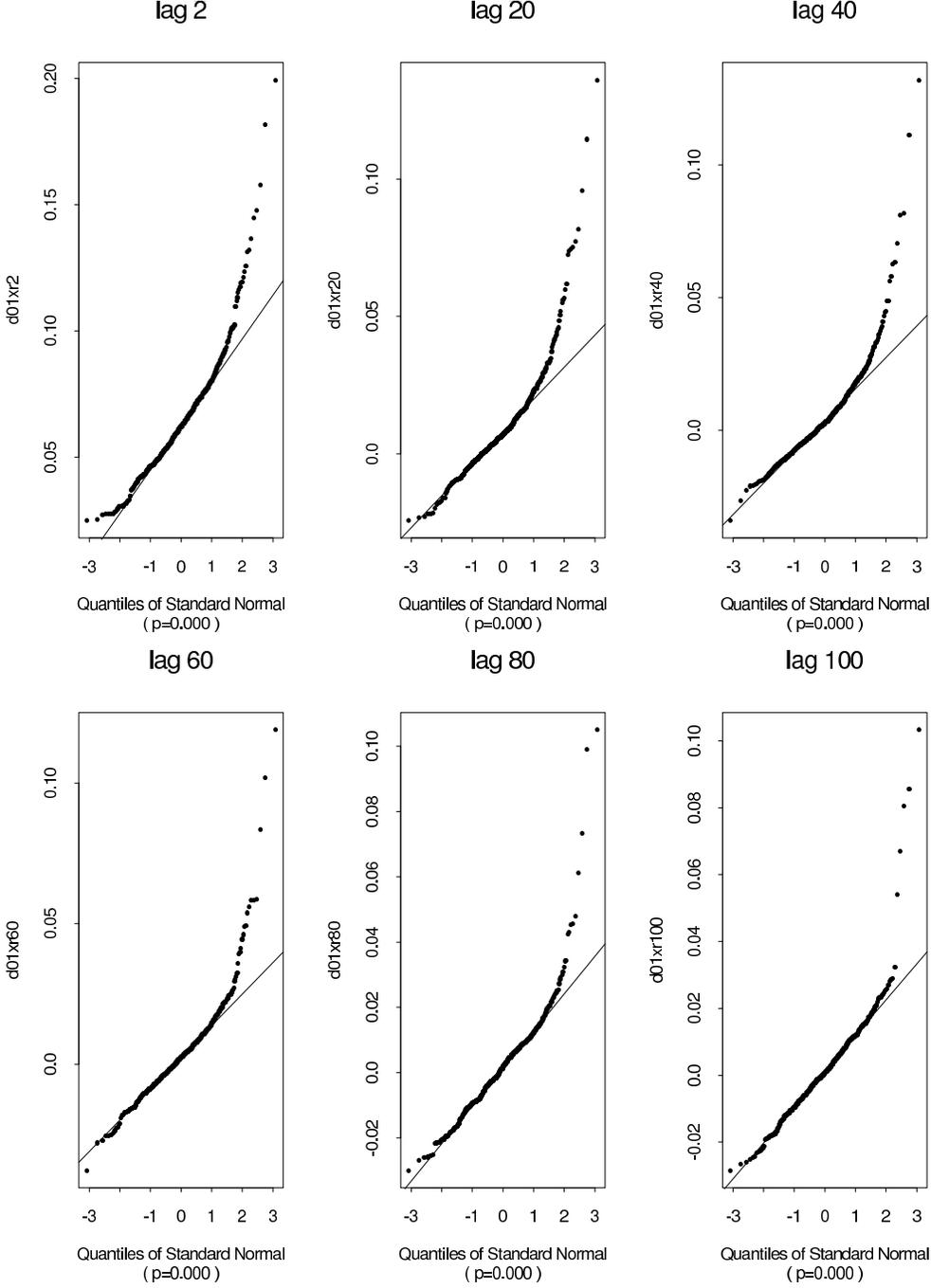}
\end{center}
\end{figure}

\clearpage
\begin{table}
\caption{Simulation variances for normalized DFT cosine
coefficients at frequency $x_j$ for Taqqu-Levy Process, with
normal-based $95\%$ Confidence Intervals. $\alpha=0.05$. Intervals
marked with $\ast$ reject the null hypothesis, $\sigma^2=0.5$ .}
\begin{center}
\label{tab:table1}
\begin{tabular}{|r|cc|c|}\hline
$d$ & $x_j$ & Variance & Confidence Interval \\[0.5ex]
\hline\hline
 0.1 &$n^{0.2}$  & 0.54  &  0.48 \ \ \ 0.62 \\
  & $n^{0.4}$  & 0.54 & 0.47 \ \ \ 0.61 \\
  & $n^{0.6}$  & 0.49 & 0.44 \ \ \ 0.56 \\
  & $n^{0.8}$  & 0.53 & 0.47 \ \ \ 0.60 \\
  & $\frac{n}{2}-2$  & 0.51 & 0.45 \ \ \ 0.58 \\
  & $\frac{n}{2}-1$  & 0.50 & 0.44 \ \ \ 0.56 \\
  \hline\hline
  0.4 &$n^{0.2}$ & 0.56 & 0.49 \ \ \ 0.63 \\
  & $n^{0.4}$  & 0.55 &  0.49 \ \ \ 0.62 \\
  & $n^{0.6}$  & 0.52 &  0.46 \ \ \ 0.59 \\
  & $n^{0.8}$  & 0.55 &  0.49 \ \ \ 0.63 \\
  & $\frac{n}{2}-2$  & 0.55 & 0.48 \ \ \ 0.62 \\
  & $\frac{n}{2}-1$  & 0.58 & \ 0.51 \ \ \ 0.66* \\
  \hline
\end{tabular}
\end{center}
\end{table}
\begin{table} \caption{Simulation variances for normalized DFT
cosine coefficients at frequency $x_j$ for Parke Process, with
normal-based $95\%$ Confidence Intervals. $\alpha=0.05$. Intervals
marked with $\ast$ reject the null hypothesis, $\sigma^2=0.5$ .}
\begin{center}
\label{tab:table2}
\begin{tabular}{|r|cc|c|}\hline
$d$ & $x_j$ & Variance & Confidence Interval \\[0.5ex]
\hline\hline
 0.1 &$n^{0.2}$ & 0.66 &  \ 0.58 \ \ \ 0.75*\\
  & $n^{0.4}$  & 0.54 & 0.48 \ \ \ 0.61\\
  & $n^{0.6}$  & 0.46 & 0.41 \ \ \ 0.53\\
  & $n^{0.8}$  & 0.51 & 0.45 \ \ \ 0.58\\
  & $\frac{n}{2}-2$  & 0.49 & 0.44 \ \ \ 0.56\\
  & $\frac{n}{2}-1$  & 0.46 & 0.41 \ \ \ 0.53\\
  \hline\hline
  0.4 &$n^{0.2}$ & 0.47 & 0.42 \ \ \ 0.54\\
  & $n^{0.4}$  & 0.49 & 0.44 \ \ \ 0.56\\
  & $n^{0.6}$  & 0.46 & 0.41 \ \ \ 0.53\\
  & $n^{0.8}$  & 0.47 & 0.42 \ \ \ 0.53\\
  & $\frac{n}{2}-2$  & 0.54 & 0.48 \ \ \ 0.62\\
  & $\frac{n}{2}-1$  & 0.52 & 0.46 \ \ \ 0.59\\
   \hline
\end{tabular}
\end{center}
\end{table}
\clearpage

\begin{center}
{\bf   APPENDIX}
\end{center}

\appendix

\section{Lemmas}
We present some lemmas in this section. Most of these are
presumably known, but we were unable to find references for them
under the conditions we needed for our main results. We therefore
include proofs for the sake of completeness.
\begin{lem} \label{lem:remplace} Let
$(\zeta_k)_{k\in\Nset^*}$ be a martingale difference sequence such
that $\sup_{k\geq1}\esp[|\zeta_k|^p]$ $< \infty$  for all
$p<\alpha$. Then, for any slowly varying function $h$,
\[
\sum_{k=1}^{M_n} \zeta_k - \sum_{k=1}^{[n/\mu]} \zeta_k  =
o_P(h(n) n^{1/\alpha}).
\]
\end{lem}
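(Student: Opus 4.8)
The plan is to replace the random upper limit $M_n$ by the deterministic index $m := [n/\mu]$ by controlling the martingale increment over the (random) gap between them. Writing $Z_j := \sum_{k=1}^j \zeta_k$, the quantity to bound is $Z_{M_n}-Z_m$. Two ingredients will be combined: first, that the renewal counting process concentrates around $m$ at the scale $\ell(n) n^{1/\alpha}$, and second, that a martingale difference sequence with uniformly bounded moments of some order $p \in (1,\alpha)$ has partial sums over a window of length $N$ of size only $O_P(N^{1/p})$, a genuine power saving since $p>1$.

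First I would record the fluctuation bound for $M_n$. Since the durations satisfy the regular variation hypothesis \eqref{eq:regvarrenewal} (resp. \eqref{eq:regvar}), the partial sums of the $T_i$ obey a stable limit theorem with norming $\ell(k) k^{1/\alpha}$, and inverting this gives the corresponding stable limit for the counting process, so that $(M_n - n/\mu)/(\ell(n)n^{1/\alpha})$ converges in distribution and is in particular tight. Hence, given $\epsilon>0$, I can fix $K$ so large that the event $A_n := \{|M_n - m|\le J_n\}$, with $J_n := K\,\ell(n)n^{1/\alpha}$, satisfies $\pr(A_n^c)<\epsilon$ for all large $n$ (replacing $n/\mu$ by $m$ costs at most $1$, negligible at scale $J_n$). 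It is essential that the true scale of the fluctuations is $n^{1/\alpha}$ up to a slowly varying factor rather than the cruder $o(n)$ coming from the law of large numbers: I will need $J_n$ to be smaller than $n^{p/\alpha}$, and since $p/\alpha<1$ the law of large numbers alone would not suffice.

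Next I would make the reduction to a deterministic window. On $A_n$ the integer $M_n$ lies in $[m-J_n,m+J_n]$, so pathwise
\[
|Z_{M_n}-Z_m|\,\ind_{A_n}\le \max_{|j-m|\le J_n}|Z_j-Z_m| \le 2\max_{m-J_n\le j\le m+J_n}|Z_j-Z_{m-J_n}|,
\]
the last step by the triangle inequality. The point is that the maximum is now over a fixed range of indices, so no measurability relation between $M_n$ and the filtration of $(\zeta_k)$ is needed. Since $(Z_{m-J_n+i}-Z_{m-J_n})_{0\le i\le 2J_n}$ is a martingale, Doob's $L^p$ maximal inequality followed by the martingale moment inequality of von Bahr--Esseen / Burkholder type (valid for martingale differences and $1<p\le2$, via subadditivity of $t\mapsto t^{p/2}$ applied to the quadratic variation) gives
\[
\esp\Big[\max_{0\le i\le 2J_n}|Z_{m-J_n+i}-Z_{m-J_n}|^p\Big]\le C_p \sum_{k=m-J_n+1}^{m+J_n}\esp[|\zeta_k|^p]\le C_p'\, J_n\,\sup_{k}\esp[|\zeta_k|^p],
\]
where I fix any $p\in(1,\alpha)$, so the supremum is finite by hypothesis.

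Finally I would combine the two estimates. For any slowly varying $h$ and any $\delta>0$, Markov's inequality with this $p$-th moment bound yields
\[
\pr\Big(|Z_{M_n}-Z_m|\,\ind_{A_n}>\delta h(n) n^{1/\alpha}\Big) \le \frac{2^p C_p' J_n \sup_k\esp[|\zeta_k|^p]}{\delta^p h(n)^p n^{p/\alpha}} = \frac{C''\,\ell(n)}{\delta^p h(n)^p}\, n^{(1-p)/\alpha},
\]
which tends to $0$ because the negative exponent $(1-p)/\alpha$ dominates the slowly varying prefactor $\ell(n)h(n)^{-p}$. Together with $\pr(A_n^c)<\epsilon$ this gives $\pr(|Z_{M_n}-Z_m|>\delta h(n)n^{1/\alpha})\le \epsilon+o(1)$, and since $\epsilon$ is arbitrary, the claim follows. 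The crux of the argument is the simultaneous choice of $p$ and the window $J_n$: the window must be wide enough to capture $M_n$, forcing $J_n\gtrsim \ell(n)n^{1/\alpha}$, yet narrow enough that the martingale fluctuation across it is $o(n^{1/\alpha})$, and both hold precisely because $\alpha>1$ permits an exponent $p>1$, turning $N^{1/p}$ with $N\asymp n^{1/\alpha}$ into a quantity of order $n^{1/(\alpha p)}=o(n^{1/\alpha})$.
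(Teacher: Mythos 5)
Your proof is correct and follows essentially the same route as the paper's: the paper likewise localizes $M_n$ in a window of width $\delta_n\,\ell(n)\,n^{1/\alpha}$ using the stable limit theorem for the counting process (citing Embrechts \emph{et al.}, Theorem 2.5.15), and then bounds the maximal martingale fluctuation over that window via Kolmogorov-type and Burkholder inequalities with an exponent $p\in(1,\alpha)$, which is exactly your Doob/von Bahr--Esseen step, concluding from $J_n^{1/p}=o(n^{1/\alpha})$ just as you do. The only cosmetic difference is that the paper widens the window with a sequence $\delta_n$ diverging ``slowly enough,'' whereas you fix $K=K(\epsilon)$ by tightness and let $\epsilon\downarrow0$ afterwards; the two devices are equivalent (yours is arguably cleaner).
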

\begin{proof}
To simplify the notation, without loss of generality, we can
assume that $\mu=1$. For all $m$, denote $S_m = \sum_{k=1}^m
\zeta_k$. By Theorem 2.5.15 in Embrechts {\em et al.} (1997),
%\footnote{This theorem is stated for a pure renewal process
%  (\ie $S_0=0$), but the adaptation to a delayed process is trivial.}
there exists a slowly varying function $\ell$ such that
$\ell(n)^{-1} n^{-1/\alpha} (M_n-n)$ converges in distribution to
a stable law. Thus, for any sequence $\delta_n$ tending to
infinity, we have:
\begin{gather} \label{eq:cltmn}
\lim_{n\to\infty} \pr\left( |M_n-n| \geq \delta_n \ell(n)
n^{1/\alpha} \right) = 0.
\end{gather}
Let $\epsilon>0$ and $\delta_n$ be an arbitrary sequence tending
to infinity. For any slowly varying function $h$, we can write:
\begin{multline*}
  \pr\bigl(  |S_{M_n}   - S_n| \geq
  \epsilon  n^{1/\alpha} h(n) \bigr)  \\
   \leq \pr(|M_n - n| > \delta_n n^{1/\alpha} \ell(n) ) + \pr\left(|M_n-n| \leq
    \delta_n n^{1/\alpha} \ell(n);  |S_{M_n}-S_n|
    \geq  \epsilon n^{1/\alpha} h(n)\right)  \\
   \leq \pr(|M_n - n| > \delta_n n^{1/\alpha} \ell(n) )  + \pr\left(  \max_{m:
      |m-n| \leq \delta_n n^{1/\alpha} \ell(n)} |S_n - S_m| \geq \epsilon n^{1/\alpha} h(n)
  \right).
\end{multline*}
Fix some  $p\in(1,\alpha)$ and  denote $C_p =
\sup_{k\geq1}\esp[|\zeta_k|^p] < \infty$ by assumption. Denote By
Kolmogorov's and Burkholder's inequalities (cf. Hall and Heyde
(1980), Theorems 2.1 and 2.10), we obtain:
\nocite{hall:heyde:1980}
\begin{align*}
  \pr\bigl(  \max_{m: |m-n| \leq n \delta_n n^{1/\alpha} \ell(n)} &
  |S_n - S_m| \geq \epsilon n^{1/\alpha} h(n) \bigr) \\
  & \leq c \epsilon^{-1} n^{-1/\alpha} h(n)^{-1}
  \esp[|S_{n+\delta_n n^{1/\alpha}\ell(n)} -
  S_{n-\delta_n n^{1/\alpha}\ell(n)}|^p]^{1/p} \\
  & \leq c \epsilon^{-1} n^{-1/\alpha} h(n)^{-1} \left(
    \sum_{k=n-\delta_n n^{1/\alpha}\ell(n)}^{n+\delta_n n^{1/\alpha}\ell(n)} \esp[|\zeta_k|^p]
  \right)^{1/p} \\
  & \leq  c \, C_p \, \epsilon^{-1} n^{-1/\alpha} h(n)^{-1}
  (\delta_n n^{1/\alpha} \ell(n))^{1/p}.
\end{align*}
Since $p>1$, this last term is $o(1)$ is the sequence $\delta_n$
converges to infinity slowly enough.
\end{proof}

\begin{lem} \label{lem:approx1}
  Let $(\zeta_{n,k})_{1\leq k \leq n} $ be uniformly bounded random
  variables.  Let $(T_k)_{k\geq1}$ be i.i.d. random variables that
  satisfy \eqref{eq:regvar} for some $\alpha\in(1,2)$ and such that
  for all $n\geq1$ and all $k\leq n$, $T_k$ is independent of
  $\{\zeta_{n,j}, 1 \leq j < k\}$.  Let $W_k$ be i.i.d. random
  variables with zero mean and finite variance, independent of
  $\zeta_{n,k}$, $1 \leq k \leq n$ and $T_k$, $1 \leq k \leq n$.  Let
  $H$ be a bounded continuous function such that for all $u\in\Rset$
  and $v\in(0,1)$:
\begin{equation}  \label{eq:condh}
|H(u,v) - u| \leq C|u|\{u^2v^2\wedge 1 + v^2\}.
\end{equation}
If $m \leq cn$ and $j \leq n^\rho$ for some $\rho\in(0,1/\alpha)$,
then $\sum_{k=1}^m \zeta_{n,k} W_k \{H(T_k,x_{n,j}) - T_k\}$ =
$o_P(n^{1/\alpha}\ell(n))$ for any slowly varying function $\ell$.
\end{lem}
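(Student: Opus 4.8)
The plan is to condition on the durations and the weights $\zeta_{n,k}$ and to exploit that $W_k$ is centred and independent of everything else, so that the sum becomes, conditionally, a sum of independent centred terms whose variance is finite once the durations are truncated. Write $R_k=H(T_k,x_{n,j})-T_k$ and let $\mcf=\sigma(\{T_k\},\{\zeta_{n,k}\})$. Since the $W_k$ are i.i.d., centred and independent of $\mcf$, and $|\zeta_{n,k}|\le B$, one has $\esp\bigl[(\sum_{k=1}^m\zeta_{n,k}W_kR_k)^2\mid\mcf\bigr]=\sigma_W^2\sum_{k=1}^m\zeta_{n,k}^2R_k^2\le\sigma_W^2B^2\sum_{k=1}^mR_k^2$, and a Chebyshev argument reduces the whole statement to controlling $\esp[\sum_kR_k^2]$. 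The difficulty is that $R_k$ inherits the heavy tail of $T_k$: in the regime where $T_kx_{n,j}$ is large, \eqref{eq:condh} only gives $|R_k|\le 2C|T_k|$, and since $\esp[T_1^2]=\infty$ a naive second moment of $R_k$ diverges. Truncation at the level of the running maximum of the $T_k$ is therefore essential.

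Accordingly, I would fix $\delta_n\to\infty$ slowly, set $a_n=\delta_n(nL(n))^{1/\alpha}$ and $\Omega_n=\{\max_{1\le k\le m}T_k\le a_n\}$. By a union bound and \eqref{eq:regvar}, $\pr(\Omega_n^c)\le m\,\pr(T_1>a_n)\lesssim\delta_n^{-\alpha}\to0$. The decisive point is that on $\Omega_n$ one has $a_nx_{n,j}\to0$: since $x_{n,j}\asymp j/n\le n^{\rho-1}$, we get $a_nx_{n,j}\asymp n^{1/\alpha+\rho-1}\delta_nL(n)^{1/\alpha}$, which tends to $0$ exactly in the range $\rho<1-1/\alpha$ in which Proposition~\ref{prop:lowfreq} is invoked. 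Hence, for $n$ large, $T_kx_{n,j}\le1$ for every $k$ on $\Omega_n$, so the minimum in \eqref{eq:condh} is realised by its quadratic branch and $|R_k|\le C|T_k|(T_k^2x_{n,j}^2+x_{n,j}^2)\le 2Cx_{n,j}^2T_k^3$ (using $T_k\ge1$).

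It then remains to estimate $\esp\bigl[\ind_{\Omega_n}\sum_{k=1}^mR_k^2\bigr]\le 4C^2x_{n,j}^4\,m\,\esp[T_1^6\ind_{\{T_1\le a_n\}}]$. By \eqref{eq:regvar} and Karamata's theorem, $\esp[T_1^6\ind_{\{T_1\le a_n\}}]\sim\frac{\alpha}{6-\alpha}a_n^{6-\alpha}L(a_n)$, so, using $m\le cn$ and $x_{n,j}\le 2\pi n^{\rho-1}$, the right-hand side is at most a slowly varying factor times $n^{4(\rho-1)}\,n\,n^{(6-\alpha)/\alpha}=n^{2/\alpha}\,n^{-4(1-\rho-1/\alpha)}$. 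Since $1-\rho-1/\alpha>0$, this is $o(n^{2/\alpha}\ell(n)^2)$ for every slowly varying $\ell$, because a strictly negative power of $n$ dominates any $\ell(n)^{-2}$. Combining the two pieces through Markov's inequality, $\pr\bigl(|\sum_{k=1}^m\zeta_{n,k}W_kR_k|\ge\varepsilon n^{1/\alpha}\ell(n)\bigr)\le\pr(\Omega_n^c)+\sigma_W^2B^2\varepsilon^{-2}n^{-2/\alpha}\ell(n)^{-2}\,\esp\bigl[\ind_{\Omega_n}\sum_kR_k^2\bigr]\to0$, which is the claim. Note that only the independence of $W$ from $(\{T_k\},\{\zeta_{n,k}\})$ is used here, not the finer adaptedness of $T_k$.

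The one genuinely delicate step is the choice of truncation level. It must be large enough that the running maximum of the durations lies below it with probability tending to one (forcing $a_n$ just above $(nL(n))^{1/\alpha}$), yet small enough that $a_nx_{n,j}\to0$, so that only the harmless quadratic branch of \eqref{eq:condh} is ever active; these two requirements are simultaneously satisfiable precisely because $\rho<1-1/\alpha$. This tension is the crux: when $a_nx_{n,j}\not\to0$ the regimes with $T_kx_{n,j}\gtrsim1$ give $R_k\approx-T_k$, and the centring by $W_k$ then produces a genuine $\alpha$-stable contribution of order $n^{1/\alpha}$ rather than a negligible one, which is why the finite-variance hypothesis on $W$ cannot be dispensed with and why the frequency range must stay below $1-1/\alpha$.
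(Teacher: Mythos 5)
Your argument is correct in substance, but it takes a genuinely different route from the paper's. The paper also conditions on $(\{T_k\},\{\zeta_{n,k}\})$ and exploits that the $W_k$ are centred and independent of everything else, but instead of truncating it works with a fractional moment of order $p\in(1,\alpha)$: conditionally the sum has independent centred terms, so by concavity of $x\mapsto x^{p/2}$ (in effect a Burkholder/von Bahr--Esseen bound) one gets $\esp[|\xi_n|^p]\leq C\sum_{k=1}^n \esp[|H(T_k,x_{n,j})-T_k|^p]$, and then \eqref{eq:condh} together with the Karamata-type estimate $\esp[T_1^p\{(T_1 j/n)^2\wedge 1\}^p]=O(L(n)(j/n)^{\alpha-p})$ yields $\xi_n=O_P(\{nL(n)\}^{1/p}\{j^{\alpha/p-1}+(j/n)^2\})$, which is $o_P(n^{1/\alpha}\ell(n))$ once $p$ is chosen close enough to $\alpha$ --- exactly under $\rho<1-1/\alpha$. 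Your truncation-plus-Chebyshev scheme buys elementariness (only second moments of $W$ and Markov's inequality) and makes the threshold conceptually transparent: below $\rho=1-1/\alpha$ the heavy branch of \eqref{eq:condh} never activates beneath the truncation level, and your closing remark correctly identifies why the restriction is essential. The paper's fractional-moment argument is shorter, needs no truncation event, and delivers an explicit rate by pushing $p\uparrow\alpha$. Note also that both proofs cover only $\rho<1-1/\alpha$: the statement's range $\rho\in(0,1/\alpha)$ is evidently a typo (the paper's own proof says ``if $\rho<1-1/\alpha$, then $p$ can be chosen\dots''), and $1-1/\alpha$ is what Proposition~\ref{prop:lowfreq} actually uses, so you are consistent with the paper here; likewise your observation that only independence of $(W_k)$ from $(\{T_k\},\{\zeta_{n,k}\})$ is used, not the adaptedness of $T_k$, matches the paper, whose proof never invokes it either.

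One small repair is needed in your choice of truncation level. With $a_n=\delta_n(nL(n))^{1/\alpha}$ one has $m\,\pr(T_1>a_n)\leq c\,\delta_n^{-\alpha}\,L(a_n)/L(n)$, and since $a_n$ differs from $n$ by a genuine power, the ratio $L(a_n)/L(n)$ need not be bounded --- for $L(t)=\exp(-\sqrt{\log t})$ it grows faster than any power of $\log n$ --- so ``$\delta_n\to\infty$ slowly'' (say logarithmically) does not guarantee $\pr(\Omega_n^c)\to0$. Taking instead $\delta_n=n^{\epsilon}$ with $\epsilon>0$ small enough that $\epsilon<1-1/\alpha-\rho$ and $(6-\alpha)\epsilon<4(1-\rho-1/\alpha)$ fixes this: by Potter's bounds $\delta_n^{-\alpha}L(a_n)/L(n)\to0$, your condition $a_nx_{n,j}\to0$ survives, and your exponent bookkeeping still leaves a strictly negative power of $n$, which dominates any slowly varying $\ell(n)^{-2}$ as you argue. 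This is a one-line adjustment, not a gap; the rest of the proof (the conditional variance identity, the pointwise bound $|R_k|\leq 2CT_k^3x_{n,j}^2$ on $\Omega_n$, the truncated sixth-moment estimate, and the final Chebyshev combination) is sound.
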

\begin{proof}
  Define $\xi_n = \sum_{k=1}^n \zeta_{n,k} \{H(jT_k/n) - jT_k/n\} W_k$
  and let $\esp_{T,\zeta}$ denote the conditional expectation with
  respect to all the variables $\zeta_{n,k}$ and $T_k$.  Since the
  variables $\zeta_{n,k}$ are uniformly bounded, and since, for
  $p\in[1,\alpha)$, the function $x \to x^{p/2}$ is concave, we
  obtain:
\begin{align*}
  \esp_{T,\zeta}\left[ \left| \xi_n \right|^{p} \right] & \leq C
  \left\{\sum_{k=1}^n \{H(T_k,x_{n,j}) -  T_k\}^2 \right\}^{p/2}
  \leq C \sum_{k=1}^{n} |H(T_k,x_{n,j}) - T_k|^p.
\end{align*}
Hence, taking expectations on both sides and applying
\eqref{eq:condh}, we obtain:
\begin{align*}
  \esp\left[ \left| \xi_n \right|^{p} \right] & \leq C \sum_{k=1}^n
  \esp[|H(T_k,x_{n,j}) - T_k|^p] \\
  & \leq C n \esp[|T_1|^p(|jT_1/n|^2 \wedge 1)^p] + C n (j/n)^{2p}
  \leq C n L(n) \{(j/n)^{\alpha-p}+(j/n)^{2p}\}.
\end{align*}
Thus, $\xi_n = O_P(\{nL(n)\}^{1/p} \{j^{\alpha/p-1}+ (j/n)^2\})$.
If $\rho < 1-1/\alpha$, then $p$ can be chosen such that
$\{nL(n)\}^{1/p}\{j^{\alpha/p-1}+ (j/n)^2\} = o(n^{1/\alpha}
\ell(n))$. Hence $\xi_n = o_P(n^{1/\alpha}\ell(n))$ for any slowly
varying function $\ell$.
\end{proof}
\begin{lem}\label{lem:approx2}
  Let $\zeta_k$ be a sequence of i.i.d. rv such that for all
  $p\in(1,\alpha)$, $\esp[|\zeta_k|^p]<\infty$, $\esp[\zeta_k]=0$ and
  $\zeta_k$ is independent of $S_0, T_1,\dots,T_{k-1}$.  Let $K$ be a
  bounded continuously differentiable function on $\Rset$, with
  bounded derivative.  Define $U_{m,n,j} = \sum_{k=1}^m
  K(S_{k-1}x_{n,j}) \zeta_k$ and $V_{m,n,j} = \sum_{k=1}^m K((k-1)\mu
  x_{n,j}) \zeta_k$. If $m \leq cn$ and $j\leq n^\rho$ for some
  $\rho\in(0,1-1/\rho$, then $U_{m,n,j}-V_{m,n,j} = o_P(n^{1/\alpha}
  \ell(n))$ for any slowly varying function $\ell$.
\end{lem}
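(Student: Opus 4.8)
The plan is to exploit the martingale structure of the difference. Writing $a_{n,k} = K(S_{k-1}x_{n,j}) - K((k-1)\mu x_{n,j})$, we have $U_{m,n,j}-V_{m,n,j} = \sum_{k=1}^m a_{n,k}\zeta_k$. Since $a_{n,k}$ is a function of $S_0,T_1,\dots,T_{k-1}$ only, while $\zeta_k$ has mean zero and is independent of $S_0,T_1,\dots,T_{k-1}$ (and of $\zeta_1,\dots,\zeta_{k-1}$), the sequence $(a_{n,k}\zeta_k)_k$ is a martingale difference sequence for the filtration $\mathcal{G}_{k} = \sigma(S_0,T_1,\dots,T_k,\zeta_1,\dots,\zeta_k)$. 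First I would therefore apply Burkholder's inequality (as in the proof of Lemma~\ref{lem:remplace}) with an exponent $p\in(1,\alpha)$ to be chosen close to $\alpha$, followed by the elementary subadditivity of $t\mapsto t^{p/2}$ (legitimate since $p/2<1$), to obtain
\[
\esp\bigl[|U_{m,n,j}-V_{m,n,j}|^p\bigr] \leq C\,\esp\Bigl[\bigl(\textstyle\sum_{k=1}^m a_{n,k}^2\zeta_k^2\bigr)^{p/2}\Bigr] \leq C\,\esp[|\zeta_1|^p]\sum_{k=1}^m \esp[|a_{n,k}|^p],
\]
where in the last step I use the independence of $\zeta_k$ and $a_{n,k}$ together with $\esp[|\zeta_1|^p]<\infty$.

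Next I would bound the coefficients. Because $K$ is bounded with bounded derivative, $|a_{n,k}| \leq \min\bigl(\|K'\|_\infty\, x_{n,j}|S_{k-1}-(k-1)\mu|,\, 2\|K\|_\infty\bigr)$; writing $S_{k-1}-(k-1)\mu = S_0 + \Sigma_{k-1}$ with $\Sigma_{k-1} = \sum_{i=1}^{k-1}(T_i-\mu)$, the elementary bound $\min(X+Y,b)\leq\min(X,b)+\min(Y,b)$ for $X,Y\geq 0$ splits the estimate of $\esp[|a_{n,k}|^p]$ into an $S_0$ part and a $\Sigma_{k-1}$ part. For the centered i.i.d.\ sum I would invoke the von Bahr--Esseen inequality, which for $p<\alpha$ gives $\esp[|\Sigma_{k-1}|^p]\leq 2(k-1)\esp[|T_1-\mu|^p] = O(k)$, so this part contributes at most $C x_{n,j}^p \sum_{k\leq m}k \leq C x_{n,j}^p m^2$ to the sum.

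The delicate point, and the main obstacle, is the $S_0$ term: by Proposition~\ref{prop:Xstationnaire}, $S_0$ has a regularly varying tail of index $\alpha-1\in(0,1)$, so $\esp[S_0^p]=\infty$ for the values of $p$ near $\alpha$ that we need, and the naive Lipschitz bound alone is useless. Here I would keep the truncation and estimate the \emph{truncated} moment $\esp[\min((x_{n,j}S_0)^p,1)]$ by Karamata's theorem, which yields a bound of order $x_{n,j}^{\alpha-1}L(1/x_{n,j})$ that is, crucially, \emph{independent of} $p$; summed over $k\leq m$ this contributes at most $C\,m\,x_{n,j}^{\alpha-1}L(1/x_{n,j})$. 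Combining the two parts gives
\[
\esp\bigl[|U_{m,n,j}-V_{m,n,j}|^p\bigr] \leq C\bigl(x_{n,j}^p m^2 + m\,x_{n,j}^{\alpha-1}L(1/x_{n,j})\bigr).
\]

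Finally I would insert $x_{n,j}=2\pi j/n\leq 2\pi n^{\rho-1}$ and $m\leq cn$, and check via Markov's inequality applied to the $p$-th moment that both terms are $o\bigl(n^{p/\alpha}\ell(n)^p\bigr)$. The first term forces $2 - p(1-\rho) < p/\alpha$, which as $p\uparrow\alpha$ becomes exactly $\rho<1-1/\alpha$; the second forces $1-(1-\rho)(\alpha-1)<p/\alpha$, which is automatic for $p$ near $\alpha$ since $(1-\rho)(\alpha-1)>0$. Thus choosing $p<\alpha$ sufficiently close to $\alpha$ makes both exponents strictly smaller than $p/\alpha$, the slowly varying factors not affecting these strict inequalities, and this yields $U_{m,n,j}-V_{m,n,j}=o_P(n^{1/\alpha}\ell(n))$ for every slowly varying $\ell$, as required. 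Note that the binding constraint $\rho<1-1/\alpha$ comes from the $\Sigma_{k-1}$ term; the heavier-tailed $S_0$ term turns out to be the \emph{less} restrictive one, precisely because the truncation removes its dependence on $p$.
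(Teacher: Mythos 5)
Your proof is correct, and while it runs on the same engine as the paper's — write $S_{k-1}-(k-1)\mu$ as $S_0$ plus the centered walk $R_{k-1}=T_1+\cdots+T_{k-1}-(k-1)\mu$, exploit the martingale structure via Burkholder with $p<\alpha$ chosen close to $\alpha$, and extract the binding constraint $\rho<1-1/\alpha$ from the walk term — it diverges from the paper in two genuine ways. First, the paper linearizes $K(S_{k-1}x_{n,j})-K((k-1)\mu x_{n,j})$ by the mean value theorem, producing bounded predictable coefficients $\rho_{n,k}=K'(\cdot)$, whereas you use the bound $|a_{n,k}|\leq\min\bigl(\|K'\|_\infty x_{n,j}|S_{k-1}-(k-1)\mu|,\,2\|K\|_\infty\bigr)$; for the walk part the two are interchangeable (the paper gets $\esp[|R_k|^p]=O(k)$ from Burkholder where you invoke von Bahr--Esseen, same order). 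Second, and more substantively, the treatments of the $S_0$ term differ: the paper factors $S_0$ out of the sum entirely, leaving $S_0\,x_{n,j}\sum_{k}\rho_{n,k}\zeta_k$, where $S_0$ is a single almost surely finite random variable multiplying a sum that is $O_P(n x_{n,j})=O_P(j)$, so no moment of $S_0$ is ever computed and its heavy tail is irrelevant; you instead keep $S_0$ inside the $p$-th moment, correctly diagnose that the pure Lipschitz bound then fails (since $S_0$ has regularly varying tail of index $\alpha-1$, so $\esp[S_0^p]=\infty$ for $p$ near $\alpha$), and repair this with the truncation and Karamata's theorem for truncated moments, yielding the $p$-free contribution $O(m\,x_{n,j}^{\alpha-1}L(1/x_{n,j}))$, which you correctly verify is non-binding. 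What each route buys: the paper's factoring-out is shorter and sidesteps the regular variation of $S_0$ altogether; yours yields an explicit quantitative $L^p$ bound $C\bigl(x_{n,j}^p m^2+m\,x_{n,j}^{\alpha-1}L(1/x_{n,j})\bigr)$ from which the conclusion for \emph{every} slowly varying $\ell$ follows cleanly by Markov, thanks to the strict polynomial gap. One point worth making explicit in your write-up: the martingale-difference/Burkholder step requires $\zeta_k$ to be independent of the full past $\sigma(S_0,T_1,\dots,T_{k-1},\zeta_1,\dots,\zeta_{k-1})$, not merely of $S_0,T_1,\dots,T_{k-1}$; this is the intended reading of the hypotheses (the i.i.d.\ assumption supplies the rest) and is also what the paper's own proof tacitly uses.
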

\begin{proof}
  Denote $R_k = T_1+\cdots+T_k - k\mu$.  Since $K$ is differentiable,
  we can write:
\begin{align*}
  U_{m,n}-V_{m,n} & = \sum_{k=1}^m K'((k-1)\mu x_{n,j}+
  \varsigma_k(S_{k-1} -
  (k-1)\mu x_{n,j}) \{S_{k-1}-(k-1)\mu\}x_{n,j} \zeta_k \\
  & = x_{n,j} \sum_{k=1}^n \rho_{n,k} R_{k-1} \zeta_k + S_0 x_{n,j}
  \sum_{k=1}^n \rho_{n,k} \zeta_k,
\end{align*}
where $\rho_{n,k} = K'(\{(k-1)\mu+ \varsigma_k(S_{k-1} -
(k-1)\mu\}/n)$.  Since $\esp[|\zeta_k|] <\infty$ and $K'$ is
bounded, the last term above is trivially $O_P(1)$.  By
assumption, $\{\sum_{j=1}^k \rho_{n,k} R_{k-1} \zeta_k$, $1 \leq k
\leq n\}$ is a martingale with finite $p$-th moment for
$p<\alpha$.  Hence by the Burkholder inequality for martingales,
we have, for $p<\alpha$, $\esp[|R_k|^p]= O(k)$ and
\[
\esp \left[ \left| \sum_{k=1}^n \rho_{n,k} R_{k-1} \zeta_k
\right|^p \right] \leq C \sum_{k=1}^n \esp[|R_{k-1}|^p]  = O(n^2).
\]
Thus $x_{n,j}\sum_{k=1}^n \rho_{n,k} R_{k-1} \zeta_k = O_P(j
n^{2/p-1})$.  If $\rho < 1-1/\alpha$, then $p$ can be chosen so
that $j n^{2/p-1}=o(n^{1/\alpha} \ell(n))$ for any slowly varying
function $\ell$.
\end{proof}

\begin{lem} \label{lem:sv}
  Let $H$ be a bounded continuously differentiable function on $\Rset$ such that $H(x)=
  O(x^2)$ in a neighborhood of 0. If $T_1$ satisfies
  \eqref{eq:regvar}, then
\[
\lim_{t\to\infty} t^{\alpha} L^{-1}(t) \esp[H(T_1/t)] = \alpha
\int_0^\infty H(s) s^{-\alpha-1} ds.
\]
\end{lem}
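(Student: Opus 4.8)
The plan is to read the left-hand side as an integral of $H$ against the rescaled law of $T_1/t$ and to identify its limit as the integral of $H$ against the measure $\nu(ds)=\alpha s^{-\alpha-1}\,ds$ on $(0,\infty)$; observe that $\alpha\int_0^\infty H(s)s^{-\alpha-1}\,ds=\int_{(0,\infty)}H\,d\nu$ precisely because $\nu((s,\infty))=s^{-\alpha}$. Accordingly I introduce the (non-probability) measures $\nu_t(\cdot)=t^\alpha L(t)^{-1}\,\pr(T_1/t\in\cdot)$, so that $t^\alpha L(t)^{-1}\esp[H(T_1/t)]=\int_{(0,\infty)}H\,d\nu_t$. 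The first step is the pointwise tail convergence
\[
\lim_{t\to\infty}t^\alpha L(t)^{-1}\pr(T_1>ts)=s^{-\alpha},\qquad s>0,
\]
which follows from \eqref{eq:regvar} and the slow variation of $L$ (write $\pr(T_1>ts)=L(\lfloor ts\rfloor+1)(\lfloor ts\rfloor+1)^{-\alpha}$ and use $L(ts)/L(t)\to1$ with $(\lfloor ts\rfloor+1)/t\to s$). Since $s\mapsto s^{-\alpha}$ is continuous and strictly decreasing, this gives $\nu_t((a,b])\to\nu((a,b])$ for all $0<a<b<\infty$, i.e. $\nu_t\to\nu$ vaguely on $(0,\infty)$, with $\nu$ atomless.

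The point is that $H$ is not compactly supported in $(0,\infty)$, so vague convergence by itself does not control the two ends; the argument is therefore a truncation at a small level $\delta$ and a large level $M$. On the middle block $\{\delta<T_1/t\le M\}$, vague convergence applied to $H$ restricted to $(\delta,M]$ (with $\delta,M$ continuity points of $\nu$) yields
\[
t^\alpha L(t)^{-1}\esp\big[H(T_1/t)\,\ind_{\{\delta<T_1/t\le M\}}\big]\longrightarrow \alpha\int_\delta^M H(s)s^{-\alpha-1}\,ds.
\]
The far tail is handled by boundedness of $H$:
\[
t^\alpha L(t)^{-1}\,\esp\big[|H(T_1/t)|\,\ind_{\{T_1>Mt\}}\big]\le \|H\|_\infty\,t^\alpha L(t)^{-1}\pr(T_1>Mt)\longrightarrow \|H\|_\infty M^{-\alpha},
\]
which tends to $0$ as $M\to\infty$.

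The main obstacle is the region near the origin, where $\nu$ has a non-integrable singularity; this is exactly where the hypothesis $H(x)=O(x^2)$ and the constraint $\alpha<2$ enter. For $\delta$ small enough that $|H(x)|\le Cx^2$ on $[0,\delta]$, on $\{T_1\le\delta t\}$ one has $|H(T_1/t)|\le C(T_1/t)^2$, hence
\[
t^\alpha L(t)^{-1}\esp\big[|H(T_1/t)|\,\ind_{\{T_1\le\delta t\}}\big]\le C\,t^{\alpha-2}L(t)^{-1}\,\esp\big[T_1^2\,\ind_{\{T_1\le\delta t\}}\big].
\]
By Karamata's theorem for truncated moments (see Bingham, Goldie and Teugels 1989), since $\pr(T_1>x)=L(x)x^{-\alpha}$ with $\alpha<2$,
\[
\esp\big[T_1^2\,\ind_{\{T_1\le x\}}\big]\sim \frac{\alpha}{2-\alpha}\,x^2\,\pr(T_1>x),\qquad x\to\infty,
\]
so the previous display converges to $\tfrac{C\alpha}{2-\alpha}\delta^{2-\alpha}$, which vanishes as $\delta\to0$ because $2-\alpha>0$. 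I expect this Karamata estimate to be the crux. Note that the more tempting route through the layer-cake identity $\esp[H(T_1/t)]=\int_0^\infty H'(s)\pr(T_1>ts)\,ds$ followed by dominated convergence does \emph{not} go through under the stated hypotheses, since $\int_0^\delta |H'(s)|s^{-\alpha}\,ds$ need not be finite when one only knows $H\in C^1$ with $H(x)=O(x^2)$ (then $H'(s)=o(1)$ but $s^{-\alpha}$ is non-integrable at $0$ for $\alpha>1$).

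Finally I would assemble the three pieces: given $\varepsilon>0$, choose $\delta$ small and $M$ large so that the two truncation contributions and the corresponding tails $\alpha\int_0^\delta H(s)s^{-\alpha-1}\,ds$ and $\alpha\int_M^\infty H(s)s^{-\alpha-1}\,ds$ of the limit integral are each below $\varepsilon$, uniformly in large $t$, and then pass to the limit on the middle block. Sandwiching $\liminf_t$ and $\limsup_t$ of $t^\alpha L(t)^{-1}\esp[H(T_1/t)]$ within $\alpha\int_0^\infty H(s)s^{-\alpha-1}\,ds\pm O(\varepsilon)$ and letting $\varepsilon\to0$ gives the claim. It is worth remarking that continuous differentiability of $H$ is not actually needed for this route beyond continuity; it is the combination of boundedness, $H(x)=O(x^2)$ near $0$, and $1<\alpha<2$ that drives the argument.
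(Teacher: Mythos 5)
Your proof is correct, and while it shares the paper's overall truncation skeleton, its core mechanism is genuinely different. The paper first treats $H$ with compact support in $(0,\infty)$: it performs a summation by parts, rewriting $\esp[H(T_1/t)]=\int_0^\infty \pr(T>\lfloor s\rfloor+1)\,H'(s/t)\,ds/t$, changes variables, and invokes Karamata's uniform convergence theorem for $L$ on compacts to get $\int_0^\infty x^{-\alpha}H'(x)\,dx=\alpha\int_0^\infty x^{-\alpha-1}H(x)\,dx$ --- this is the one place continuous differentiability is actually used. You instead prove vague convergence of the rescaled measures $\nu_t$ to $\alpha s^{-\alpha-1}ds$ from the pointwise tail convergence, and pass to the limit on the middle block $(\delta,M]$ using only continuity of $H$ and atomlessness of the limit measure, which, as you note, shows the $C^1$ hypothesis is superfluous for the result. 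The end treatments coincide in substance with the paper's tightness step \eqref{eq:tight}: your Karamata truncated-second-moment estimate yielding $\tfrac{C\alpha}{2-\alpha}\delta^{2-\alpha}$ is exactly the paper's bound $\sum_{k\le t/A}k^2\pr(T=k)\le CA^{\alpha-2}L(At)$ with $A=1/\delta$ (the paper derives it by a second summation by parts rather than citing the truncated-moment form of Karamata's theorem), and both handle the far tail by boundedness of $H$, giving $CA^{-\alpha}$ versus your $\|H\|_\infty M^{-\alpha}$. Your caveat about the layer-cake/dominated-convergence shortcut is well taken and correctly diagnosed: $H\in C^1$ with $H(x)=O(x^2)$ forces only $H'(s)=o(1)$ at the origin (consider $H(x)=x^2\sin(x^{-\beta})$ with $\beta\in(2-\alpha,1)$), so $\int_0^\delta|H'(s)|s^{-\alpha}\,ds$ can diverge for $\alpha>1$; the paper implicitly sidesteps the same trap by using $H'$ only after truncating to compact support in $(0,\infty)$. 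What the paper's route buys in exchange is that the summation-by-parts identity works directly on the lattice distribution of $T_1$ and delivers the limit on the compact part in one stroke, without your step-function approximation argument for convergence of $\int H\,d\nu_t$.
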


\begin{proof}
  Assume first that $H$ has a compact support in $(0,\infty)$ and
  is continuously differentiable. Then:
 \begin{align*}
   \esp[H(T_1/t)] & = \sum_{k=1}^\infty H(k/t) \pr(T=k) =
   \sum_{k=1}^\infty H(k/t) \{\pr(T \geq k) - \pr(T \geq k-1) \} \\
   & = \sum_{k=1}^\infty \pr(T \geq k) \{ H(k/t) - H((k-1)/t) \}
   = \int_0^\infty \pr(T>\lfloor s \rfloor]+1) H'(s/t) \, d s/t   \\
   & = \int_0^\infty (\lfloor s \rfloor +1)^{-\alpha}
   L(\lfloor s \rfloor+1) H'(s/t) \, d s/t =
   \int_0^\infty (\lfloor t x \rfloor+1)^{-\alpha} L(\lfloor tx \rfloor + 1) H'(x) \, d x,
 \end{align*}
 Since $L$ is slowly varying, by Karamata's Theorem, we know that
 $\lim_{t\to\infty} L(t)^{-1} L(\lfloor tx \rfloor + 1) =1$,
 uniformly with respect to $x$
 in compact sets of $(0,\infty)$. Thus, since we have assumed that $H$
 has compact support in $(0,\infty)$, we obtain
 \begin{align*}
   \lim_{t\to\infty} t^{\alpha} L^{-1}(t) \esp[H(T_1/t)] & =
   \int_0^\infty x^{-\alpha} H'(x) \, d x = \alpha \int_0^\infty
   x^{-\alpha-1} H(x) \, d x.
 \end{align*}
 To conclude, it is sufficient to prove that
 \begin{gather} \label{eq:tight}
\lim_{A\to\infty} \limsup_{t\to\infty} t^\alpha L^{-1}(t) \esp[
H(T/t) \ind_{\{T>At \mbox{ or } T< t/A\}}] = 0.
 \end{gather}
This tightness property allows then to truncate the function $H$
and apply the first part of the proof. For any $A>0$ and $t$ large
enough, applying the assumption on the behaviour of the function
$H$ at zero, we have:
\begin{align*}
\esp[H(T/t) \ind_{\{T<t/A \}}] & = \sum_{k=1}^{t/A} H(k/t)
\pr(T=k) \leq C t^{-2} \sum_{k=1}^{t/A} k^2 \pr(T=k).
\end{align*}
Applying summation by parts and Karamata's theorem, we obtain:
\begin{align*} \sum_{k=1}^{t/A} k^2 \pr(T=k) &  = 1 +
\sum_{k=1}^{t/A} \pr(T \geq n) \{k^2 - (k-1)^2\} \leq C
A^{\alpha-2} L(At).
\end{align*}
Thus, there exists a constant $C$ such that:
\begin{align*}
\limsup_{t\to\infty} t^\alpha L^{-1}(t)  \,  \esp[H(T/t)
\ind_{\{T<t/A \}}] & \leq C A^{\alpha-2} \lim_{t\to\infty}
L(At)/L(t) = C A^{\alpha-2}.
\end{align*}
Similarly, we can show that
\begin{align*}
\limsup_{t\to\infty} t^\alpha L^{-1}(t)   \, \esp[H(T/t)
\ind_{\{T>At\}}] & \leq C A^{-\alpha} \leq C A^{\alpha-2}.
\end{align*}
This proves \eqref{eq:tight} and concludes the proof of Lemma
\ref{lem:sv}.
\end{proof}

\begin{lem} \label{lem:marche}
Let $j=j(n)$ be a sequence of integers such that $n^\beta \leq j
\leq n^\rho$ for $0 < \beta \leq \rho < 1$. Then
\begin{align}
  \pr-\lim_{n\to\infty} n^{-1} \sum_{k=1}^{n} \cos(x_j S_k) =
  \pr-\lim_{n\to\infty} n^{-1} \sum_{k=1}^{n} \sin(x_j S_k) = 0, \label{eq:cos} \\
  \pr-\lim_{n\to\infty} n^{-1} \sum_{k=1}^{n} \cos^2(x_j S_k) =
  \pr-\lim_{n\to\infty} n^{-1} \sum_{k=1}^{n} \sin^2(x_j S_k) = 1/2.
  \label{eq:coscarre}
\end{align}
where $\pr-\lim$ denotes convergence in probability.
\end{lem}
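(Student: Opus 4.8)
The plan is to reduce all four limits to a single statement about the complex exponential sum $Z_n(\theta) := n^{-1}\sum_{k=1}^n \rme^{\rmi\theta S_k}$, namely that $Z_n(\theta)\to0$ in $L^2$ (hence in probability) whenever $\theta = c\,x_j$ with $c\in\{1,2\}$ fixed. Indeed $n^{-1}\sum_k\cos(x_jS_k)=\re Z_n(x_j)$ and $n^{-1}\sum_k\sin(x_jS_k)=\im Z_n(x_j)$, which settles \eqref{eq:cos}; and the double-angle identities $\cos^2u=(1+\cos2u)/2$, $\sin^2u=(1-\cos2u)/2$ give $n^{-1}\sum_k\cos^2(x_jS_k)=\tfrac12+\tfrac12\re Z_n(2x_j)$ and the analogous expression for the sine, settling \eqref{eq:coscarre} once $\re Z_n(2x_j)\to0$. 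The same argument will cover both $\theta=x_j$ and $\theta=2x_j$, since all that is used is that $\theta=(\mathrm{const})\cdot j/n\to0$ while $j\to\infty$. Note also that $L^2$-convergence of $Z_n(\theta)$ to the constant $0$ is exactly $\esp[|Z_n(\theta)|^2]\to0$, so no separate mean computation is needed.

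Writing $\phi(\theta)=\esp[\rme^{\rmi\theta T_1}]$ for the common characteristic function of the interarrival times, I would expand
\[
\esp[|Z_n(\theta)|^2] = \frac1{n^2}\sum_{k,l=1}^n \esp\!\left[\rme^{\rmi\theta(S_k-S_l)}\right].
\]
The diagonal contributes $1/n$. For $k>l$ one has $S_k-S_l=T_{l+1}+\cdots+T_k$, which is independent of $S_0,T_1,\dots,T_l$; in particular the initial delay $S_0$ cancels (fortunate, since $S_0$ has infinite mean), giving $\esp[\rme^{\rmi\theta(S_k-S_l)}]=\phi(\theta)^{k-l}$. Grouping by $r=k-l$ and using conjugate symmetry for the $k<l$ terms yields
\[
\esp[|Z_n(\theta)|^2] = \frac1n + \frac2{n^2}\,\re\sum_{r=1}^{n-1}(n-r)\phi(\theta)^r .
\]

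The key analytic input is a lower bound on $|1-\phi(\theta)|$. Because $T_1$ has finite mean $\mu$, $\phi$ is differentiable at $0$ with $\phi(\theta)=1+\rmi\mu\theta+o(\theta)$, so $|1-\phi(\theta)|=\mu\theta\,(1+o(1))$ as $\theta\to0$; since $\theta=cx_j$ and $j\le n^\rho$ with $\rho<1$, we have $\theta\to0$ and this applies for large $n$. Rather than bound the sum by $\sum(n-r)|\phi(\theta)|^r$, I would evaluate it exactly via summation by parts,
\[
\sum_{r=1}^{n-1}(n-r)\phi^r = \frac{(n-1)\phi}{1-\phi} - \frac{\phi^2(1-\phi^{n-1})}{(1-\phi)^2},\qquad \phi=\phi(\theta).
\]
Using $|\phi|\le1$, $|1-\phi^{n-1}|\le2$ and $|1-\phi(\theta)|\ge\tfrac12\mu\theta$ with $\theta=2\pi cj/n$, the first term is $O\!\big(n^{-1}(\mu\theta)^{-1}\big)=O(1/j)$ and the second is $O\!\big(n^{-2}(\mu\theta)^{-2}\big)=O(1/j^2)$. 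Since $j\ge n^\beta\to\infty$, both tend to $0$, whence $\esp[|Z_n(\theta)|^2]\le n^{-1}+O(1/j)\to0$, which is the required $L^2$ convergence.

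The main obstacle is precisely the temptation to bound by moduli in the last step. Under \eqref{eq:regvar} one has $|\phi(\theta)|\approx1-c_1|\theta|^\alpha L(1/\theta)$, so $|\phi(\theta)|^n\to1$ (no geometric decay) whenever $\beta<1-1/\alpha$; thus $\sum(n-r)|\phi(\theta)|^r\asymp n$ and the naive bound gives $O(1)$, not $o(1)$. The cancellation must instead come from the rotation $\arg\phi(\theta)\approx\mu\theta$ — over $r\le n$ the phase sweeps through $\asymp j\to\infty$ full turns — and this is captured cleanly by the exact summation together with the non-degenerate lower bound $|1-\phi(\theta)|\asymp j/n$ furnished by the finiteness of $\esp[T_1]$. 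Everything else is routine, and the constraint $j\le n^\rho$ with $\rho<1$ enters only to guarantee $\theta\to0$ so that this lower bound is valid.
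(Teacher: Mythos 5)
Your proof is correct, and at both of its key steps it takes a genuinely different route from the paper's. (The reduction of \eqref{eq:coscarre} to \eqref{eq:cos} by doubling the frequency is the same as the paper's remark following the lemma.) First, the paper squares the cosine and sine sums separately, so its second moment contains both difference terms $\esp[\cos(x_j(S_k-S_\ell))]$ as in \eqref{eq:toto2} and sum terms $\esp[\cos(x_j(S_k+S_\ell))]$ as in \eqref{eq:toto1}, the latter requiring the characteristic function of $S_0$ and an extra monotonicity argument for $z^\alpha L(z)$; your passage to the complex sum $Z_n(\theta)$ and the single quantity $\esp[|Z_n(\theta)|^2]$, with $|Z_n|^2=(\re Z_n)^2+(\im Z_n)^2$, makes the sum terms disappear at the outset, since only $S_k-S_\ell$ occurs and $S_0$ (which indeed has infinite mean here) cancels. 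Second, and more substantially, the paper bounds $|\esp[\rme^{\rmi x_j(S_k-S_\ell)}]|\le|\phi(x_j)|^{k-\ell}$ and invokes the modulus contraction \eqref{eq:bornefonchar} from the Yong-type Theorem \ref{theo:fonctioncharacteristique}; its final bound in \eqref{eq:lastterm} tends to zero only when $n\ell(x_j)x_j^\alpha\to\infty$, and since $nx_j^\alpha\asymp j^\alpha n^{1-\alpha}$ this covers essentially $\beta>1-1/\alpha$ --- the high-frequency regime in which Lemma \ref{lem:marche} is actually used in the proof of Theorem \ref{theo:highfreq} --- but not the full stated range $\beta\in(0,1)$; your diagnosis that for $\beta<1-1/\alpha$ the modulus bound degrades to $O(1)$ is exactly right. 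Your substitute --- the exact evaluation $\sum_{r=1}^{n-1}(n-r)\phi^r=(n-1)\phi/(1-\phi)-\phi^2(1-\phi^{n-1})/(1-\phi)^2$ (which checks out) combined with the phase lower bound $|1-\phi(\theta)|\ge\tfrac12\mu\theta$ for small $\theta$, valid because $\phi(\theta)=1+\rmi\mu\theta+o(\theta)$ whenever $\esp[T_1]=\mu<\infty$ while the real-part deviation is $O(\theta^\alpha L(1/\theta))=o(\theta)$ --- yields $O(1/j)+O(1/j^2)$ and hence handles every $\beta\in(0,1)$, using only finiteness of the mean rather than the full regular-variation hypothesis and no Tauberian input. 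What the paper's route buys is sharper per-term information (genuine geometric decay of the correlations, with an explicit slowly varying rate) in the regime where it applies; what yours buys is elementarity, a single unified computation for all four limits, and validity on the whole range asserted by the lemma statement.
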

Note that \eqref{eq:coscarre} follows from \eqref{eq:cos} by the
relation $\cos^2(u) = (1+\cos(2u))/2$ and by replacing $j$ by
$2j$.

\noindent To prove Lemma \ref{lem:marche}, we use the following
theorem, which adapts Theorem 2 in Yong (1971).
\begin{theo} \label{theo:fonctioncharacteristique}
  Let $T$ be a non negative integer valued random variable in the
  domain of attraction of an $\alpha$-stable law with
  $\alpha\in(1,2)$, such that $\pr(T \geq k) = k^{-\alpha} L(k)$,
  where $L$ is slowly varying at infinity.  Let $\phi$
  be the characteristic function of $T$.  Then, for $z>0$, $\phi(z) =
  1-z^\alpha \ell_1(z) + \rmi \ell_2(z) z$ where $\ell_1$ and $\ell_2$
  are slowly varying at zero, positive in a neighborhood of zero and
  satisfy, for some finite nonzero constant $C(\alpha)$,
\[
\lim_{x\to0} \ell_1(x)/L(1/x) =  C(\alpha)
% = \frac\pi{2\sin(\pi\alpha/2)  \Gamma(\alpha+1)}
  \ \mbox{ and } \lim_{x \to 0} \ell_2(x) = \esp[T]>0.
\]
\end{theo}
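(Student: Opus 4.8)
The plan is to treat the real and imaginary parts of $\phi(z)-1 = \esp[\rme^{\rmi z T}]-1$ separately, matching the theorem's structure (an imaginary part of order $z$ and a real part of order $z^\alpha$). For $z>0$ write $\phi(z)=\esp[\cos(zT)]+\rmi\,\esp[\sin(zT)]$, and simply \emph{define}
\[
\ell_2(z) := z^{-1}\esp[\sin(zT)], \qquad \ell_1(z) := z^{-\alpha}\esp[1-\cos(zT)],
\]
so that $\phi(z)=1-z^\alpha\ell_1(z)+\rmi\,z\,\ell_2(z)$ holds by construction. Everything then reduces to the $z\to0^+$ asymptotics of these two quantities and to verifying their slow variation and positivity.

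For the imaginary part the key point is that $\alpha>1$ forces $\esp[T]=\sum_{k\ge1}\pr(T\ge k)<\infty$. Since $|z^{-1}\sin(zT)|\le T$ for $z>0$ (using $|\sin u|\le|u|$) and $z^{-1}\sin(zT)\to T$ pointwise as $z\to0^+$, dominated convergence gives $\ell_2(z)\to\esp[T]$. A function tending to a finite nonzero limit is automatically slowly varying at $0$ and positive near $0$, which delivers all the stated properties of $\ell_2$ at once, with limit $\esp[T]>0$.

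For the real part I would apply Lemma~\ref{lem:sv} with the test function $H(x)=1-\cos x$: this $H$ is bounded, infinitely differentiable, and satisfies $H(x)=O(x^2)$ near $0$, so the hypotheses hold. Taking $t=1/z$ (so $t\to\infty$ as $z\to0^+$) and using $\pr(T\ge k)=k^{-\alpha}L(k)$, the lemma yields
\[
\lim_{z\to0^+} z^{-\alpha}L(1/z)^{-1}\,\esp[1-\cos(zT)] = \alpha\int_0^\infty (1-\cos s)\,s^{-\alpha-1}\,ds =: C(\alpha).
\]
The integral converges at $0$ because $1-\cos s\sim s^2/2$ and $\alpha<2$, and at $\infty$ because $1-\cos s$ is bounded and $\alpha>0$; since the integrand is nonnegative and not a.e. zero, $C(\alpha)\in(0,\infty)$. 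Hence $\ell_1(z)\sim C(\alpha)L(1/z)$, which gives $\lim_{z\to0^+}\ell_1(z)/L(1/z)=C(\alpha)$.

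Finally, slow variation of $\ell_1$ follows from this equivalence: for fixed $a>0$ one has $\ell_1(az)/\ell_1(z)=\{L(1/(az))/L(1/z)\}(1+o(1))\to1$, because $L$ is slowly varying at $\infty$ and $1/z\to\infty$; positivity near $0$ is immediate from $C(\alpha)>0$. I expect the only delicate points to be the clean verification that $H=1-\cos$ meets the hypotheses of Lemma~\ref{lem:sv} and the bookkeeping in passing from the regularly varying tail to the constant $C(\alpha)$. The genuine analytic content — the Karamata estimate and the tightness truncation — has already been absorbed into Lemma~\ref{lem:sv}, so the remaining work is essentially a change of variables together with the elementary fact that convergence to a nonzero constant forces slow variation.
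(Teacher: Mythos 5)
Your proposal is correct, but it takes a genuinely different route from the paper, which in fact offers no self-contained proof of Theorem~\ref{theo:fonctioncharacteristique}: it is presented as an adaptation of Theorem 2 of Yong (1971) on trigonometric series with regularly varying coefficients. The Yong route requires passing from the characteristic function $\phi(z)=\sum_k \pr(T=k)\rme^{\rmi kz}$ to tail sums by Abel summation, since Yong's hypotheses (regular variation, quasi-monotonicity) hold here only for the tails $\pr(T\geq k)$ and not necessarily for the point masses $\pr(T=k)$ --- hence the paper's word ``adapts.'' You sidestep this entirely by defining $\ell_1(z)=z^{-\alpha}\esp[1-\cos(zT)]$ and $\ell_2(z)=z^{-1}\esp[\sin(zT)]$, so that the decomposition holds by construction, and by reusing the paper's own Lemma~\ref{lem:sv}, whose proof already contains the summation by parts, the Karamata uniform-convergence step, and the truncation/tightness argument. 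What remains is then genuinely elementary and your verification is sound: $H(x)=1-\cos x$ is bounded, continuously differentiable with bounded derivative, and $O(x^2)$ at $0$, so Lemma~\ref{lem:sv} with $t=1/z$ gives $\ell_1(z)\sim C(\alpha)L(1/z)$ with $C(\alpha)=\alpha\int_0^\infty(1-\cos s)\,s^{-\alpha-1}\,ds$, finite and strictly positive because the integrand is nonnegative, integrable at $0$ (as $\alpha<2$) and at $\infty$ (as $\alpha>0$); dominated convergence with the bound $|\sin u|\leq|u|$ and $\esp[T]<\infty$ (forced by $\alpha>1$) gives $\ell_2(z)\to\esp[T]>0$; and asymptotic equivalence to a positive slowly varying quantity (respectively, to a positive constant) does imply slow variation at $0$ and positivity near $0$. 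Your approach buys a proof that is entirely internal to the paper's toolkit and even identifies the constant $C(\alpha)$ explicitly, whereas the paper's citation of Yong buys brevity and connects the statement to a classical body of results on trigonometric series; the substance is the same, merely repackaged, since Lemma~\ref{lem:sv} encapsulates exactly the analytic content that Yong's theorem would otherwise supply.
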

We will use Theorem \ref{theo:fonctioncharacteristique} through
the following bound for the modulus of the characteristic function
of $T$:
\begin{align}  \label{eq:bornefonchar}
  |\phi(z)|^2 \leq 1 - 2 \ell(z) z^{\alpha},
\end{align}
where $\ell(z) = \ell_1(z) - \frac12 \ell_1^2 z^\alpha - \frac12
\ell_2^2(z) z^{2-\alpha}$ is slowly varying and positive in a
neighborhood of zero.
\begin{proof}[Proof of Lemma \ref{lem:marche}]
We prove that the convergence holds in $L^2$. Write
\begin{align*}
  \esp \left[ \left\{ \frac{1}{n} \sum_{k=1}^{n} \cos( x_j S_k)
    \right\}^2 \right] & = \frac1{n^2} \sum_{k=1}^n \esp[\cos^2(x_j
  S_k)] + \frac2{n^2} \sum_{k=2}^n \sum_{\ell=1}^{k-1} \esp[\cos( x_j
  S_\ell) \cos( x_j S_k)]   \\
  & = O(n^{-1}) + \frac1{n^2} \sum_{k=2}^n \sum_{\ell=1}^{k-1}
  \esp[\cos( x_j (S_\ell+S_k)) + \cos( x_j (S_k-S_\ell))], \\
  \esp \left[ \left\{ \frac{1}{n} \sum_{k=1}^{n} \sin( x_j S_k)
    \right\}^2 \right] & = O(n^{-1}) + \frac1{n^2} \sum_{k=2}^n
  \sum_{\ell=1}^{k-1} \esp[\cos( x_j (S_k-S_\ell)) - \cos( x_j
  (S_k+S_\ell))].
\end{align*}
Thus we have to show that
\begin{align}
  \lim_{n\to\infty} \frac1{n^2} \sum_{k=2}^n \sum_{\ell=1}^{k-1}
  \esp[\cos( x_j (S_k-S_\ell))] = 0 \; ,   \label{eq:toto2} \\
  \lim_{n\to\infty} \frac1{n^2} \sum_{k=2}^n \sum_{\ell=1}^{k-1}
  \esp[\cos( x_j (S_\ell+S_k))] = 0 \; . \label{eq:toto1}
\end{align}
\noindent{\em Proof of (\ref{eq:toto2}).} Applying
\eqref{eq:bornefonchar}, for large enough $n$, we have
\begin{align*}
  \left| \frac{1}{n^2} \sum_{k=2}^n \sum_{k'=1}^{k-1} \esp[ \cos
    (x_j(S_k-S_{k'})) ] \right| \leq \frac{1}{n^2} \sum_{k=2}^n
  \sum_{k'=1}^{k-1} |\phi(x_j)|^{k-k'} \leq \frac{1}{n} \sum_{k=1}^{n-1}
 \{1 - 2 \ell(x_j) x_j^\alpha \}^{k/2}.
\end{align*}
For $z\in(0,1)$ and any real number $t\geq1$, $(1-z)^ = \rme^{t
\log(1-z)} \leq \rme^{-t z}$ and
\begin{align*}
  \frac{1}{n}\sum_{k=1}^{n} (1-z)^{k/2} \leq \frac{1}{n}\sum_{k=1}^{n}
  \rme^{-k z/2} & = \frac{1- \rme^{-nz/2}}{n(\rme^{z/2}-1)} \leq
  \frac{1-e^{-nz/2}}{nz/2}.
\end{align*}
Hence:
%Applying Theorem \ref{theo:fonctioncharacteristique}, we obtain:
\begin{align}
  \left| \frac{1}{n^2} \sum_{k=2}^n \sum_{k'=1}^{k-1} \esp[ \cos
    (x_j(S_k-S_{k'})) ] \right| \leq \frac{1}{n}
  \sum_{k=1}^{n-1} \{1 - 2 \ell(x_j) x_j^\alpha \}^{k/2}
  \leq \frac{1- \rme^{- \ell(x_j) n x_j^\alpha}}{n \ell(x_j)x_j^\alpha}.
  \label{eq:lastterm}
\end{align}
Under the assumption on the sequence $j$, $\lim_{n\to\infty} n
\ell(x_j) x_j^\alpha = \infty$. Thus the limit of the last term in
\eqref{eq:lastterm} is 0.  This concludes the proof of
\eqref{eq:toto2}.

\noindent{\em Proof of (\ref{eq:toto1}).} Since $S_k+S_{k'} = 2S_0
+ 2(T_1 + \cdots + T_{k'}) + T_{{k'}+1} + \cdots + T_k$, and
denoting $\phi_0$ the characteristic function of $S_0$, we have
\begin{align*}
\left|   \frac{1}{n^2} \sum_{k=2}^n \sum_{{k'}=1}^{k-1}
\esp\left[\cos
    (x_j(S_k+S_{k'})) \right] \right| & \leq  \frac{1}{n^2} \sum_{k=2}^n
  \sum_{k'=1}^{k-1}  |\phi_0(2x_j)|
  |\phi(2x_j)|^{k'} | \phi(x_j)|^{k-k'}.
\end{align*}
Applying \eqref{eq:bornefonchar}, for large enough $n$, we obtain:
\begin{align*}
  \left| \frac{1}{n^2} \sum_{k=2}^n \sum_{k'=1}^{k-1} \esp[ \cos
    (x_j(S_k+S_{k'})) ] \right| & \leq \frac{1}{n^2} \sum_{k=2}^n
  \sum_{k'=1}^{k-1} \{1 - 2 \ell(2x_j)(2x_j)^\alpha\}^{k'} \{1 -
  2 \ell(x_j) x_j^\alpha \}^{k-k'}.
\end{align*}
Since for any slowly varying function $L$ and any $\alpha>0$ the
function $z^\alpha L(z)$ is ultimately non decreasing, we obtain,
for $n$ large enough:
\begin{align*}
  \left| \frac{1}{n^2} \sum_{k=2}^n \sum_{k'=1}^{k-1} \esp[ \cos
    (x_j(S_k+S_{k'}) ] \right| & \leq \frac{1}{n} \sum_{k=2}^n \{1 - 2
  \ell(x_j) x_j^\alpha \}^{k/2}.
\end{align*}
The same line of reasoning as previously  concludes the proof of
\eqref{eq:toto1} and of Lemma \ref{lem:marche}.
\end{proof}

\section{Proof of the main results}

\begin{proof}[Proof of Proposition~\ref{prop:WellDefined}]
  A necessary and sufficient condition for this process to be well defined is
  that almost surely, for all $t\in\Zset$,
\[
\inf\{s \in \Zset, s+n_s \geq t\} > -\infty.
\]
Since the random variables $n_s$ are i.i.d. with the same
distribution as $N$, by Borel-Cantelli's Lemma this condition is
equivalent to
\[
\sum_{s \leq t} \pr(n_s \geq t - s) = \sum_{k=0}^\infty \pr(N \geq
k) < \infty.
\]
Hence the necessary and sufficient condition for Parke's error
duration process to be well defined is $\esp[N]<\infty$. The
expression of the autocovariance function is proved in Parke
(1999) Proposition 1.
\end{proof}

\begin{proof}[Proof of Proposition~\ref{prop:invariance} in the case of Parke's process]
  For any real numbers $x,y$, denote $x_+=\max(x,0)$,
  $x_-=\max(-x,0)$, $x \vee y = \max(x,y)$ and $x \wedge y=\min(x,y)$.
  Recall that $g_{s,t}=1$ if $s \leq t \leq s+n_s$ and 0 otherwise.
  Hence, we can write:
\begin{align*}
  \sum_{k=1}^n X_k & = \sum_{k=1}^n \sum_{s \leq k} g_{s,k} \epsilon_s
  = \sum_{s \leq n} \sum_{k = 1 \vee s}^{(s+n_s)_+\wedge n} g_{s,k}
  \epsilon_s \\
  & = \sum_{s \leq 0} \{(s+n_s)_+ \wedge n \} \epsilon_s +
  \sum_{s=1}^n \{(s+n_s) \wedge n - s + 1\} \epsilon_s = U_n + V_n.
\end{align*}
Since $\sum_{s\leq 0}\pr(s+n_s>0) = \sum_{k\geq 0} \pr(N > k) =
\esp[N] <\infty$, the number of terms in the sum $U=\sum_{s\leq 0}
(s+n_s)_+ \epsilon_s$ is almost surely finite. Hence $U_n$
converges almost surely to $U$ and $U_n=O_P(1)$. We now split
$V_n$ into three terms: $V_n=V_{1,n} - V_{2,n} +V_{3,n}$, with
\begin{gather*}
  V_{1,n} = \sum_{s=1}^n (n_s+1) \epsilon_s, \ \
  V_{2,n} = \sum_{s=1}^n (n_s+1) \ind_{\{s+n_s>n\}} \epsilon_s, \\
  \ \ \mbox{ and } \ \ V_{3,n} = \sum_{s=1}^n (n-s+1)
  \ind_{\{s+n_s>n\}} \epsilon_s.
\end{gather*}

Since the sequences $(n_s)$ and $(\epsilon_s)$ are i.i.d. and
independent of each other, $V_{3,n}$ has the same distribution as
$W_n = \sum_{k=1}^n k \ind_{\{n_k \geq k\}} \epsilon_k$.  Since
$\sum_{k=1}^\infty \pr(n_k \geq k) < \infty$, by Borel-Cantelli's
Lemma, almost surely there exists an integer $K$ such that for all
$k > K$, $n_k < k$. Hence $W_n$ converges almost surely to
$\sum_{k=1}^\infty k \ind_{\{n_k \geq k\}} \epsilon_k$, which is
almost surely a finite sum. This implies that $V_{3,n}=O_P(1)$.

Similarly, $V_{2,n}$ has the same distribution as $\sum_{k=1}^n
n_k \ind_{\{n_k\geq k\}} \epsilon_k$, which converges almost
surely to the almost surely finite sum $\sum_{k=1}^\infty n_k
\ind_{\{n_k\geq k\}} \epsilon_k$.  Hence $V_{2,n} = O_P(1)$.

Under assumption \eqref{eq:regvar}, $N$ is in the domain of
attraction of an $\alpha$-stable law.  Since
$\esp[\epsilon_0^2]<\infty$, by Breiman's (1965) theorem,
$(n_s+1)\epsilon_s$ is an i.i.d. sequence in the domain of
attraction of an $\alpha$-stable law.
%\nocite{breiman:1965} Moreover, the tail behaviour of
%$(N+1)\epsilon_0$ is given by the tail behaviour of $N$: If
%\eqref{eq:regvar} holds, then
%\begin{gather*}
%  \lim_{t\to\infty} L(t)^{-1} t^{\alpha} \pr((N+1)\epsilon_0 \geq t)
%  = \esp[(\epsilon_0)_+^\alpha], \\
%  \lim_{t\to\infty} L(t)^{-1} t^{\alpha} \pr((N+1)\epsilon_0 \leq -t)
%  = \esp[(\epsilon_0)_-^\alpha].
%\end{gather*}
%Thus, defining
%\[
%\ell(n) = n^{-1/\alpha} \inf \left\{ t>0: \; \pr( N> t) < n^{-1}
%\right\},
%\]
Thus we obtain that $n^{-1/\alpha} \ell(n)^{-1} V_{1,n}$ converges
weakly to the stable distribution with characteristic function
given by \eqref{eq:cflevy} (cf. for instance Embrechts {\em et
al.} (1997), Proposition 2.2.13).
\nocite{embrechts:kluppelberg:mikosch:1997} The convergence of
finite dimensional distribution is obtained similarly.
\end{proof}

\begin{proof}[Sketch of Proof of Theorem~\ref{theo:empirique}]
  Neglecting the first and last renewal periods, write
  \begin{multline*}
    \ell(n)^{-1} n^{1-1/\alpha}\{ \hat F_n(x) - F_W(x) \} =
    \ell(n)^{-1} n^{-1/\alpha} \sum_{k=1}^{M_n} \ind_{\{W_k \leq x\}}
    (T_k - \mu) \\
    + \mu \ell(n)^{-1} n^{-1/\alpha} \sum_{k=1}^{M_n} \left\{
      \ind_{\{W_k \leq x\}} - F_W(x) \right\} + (\mu \ell(n)^{-1}
    n^{-1/\alpha} M_n - 1)F_W(x) + o_P(1).
  \end{multline*}
  By Lemma \ref{lem:remplace}, the finite dimensional distributions of $\ell(n) n^{-1/\alpha}
  \sum_{k=1}^{M_n} \ind_{\{W_k \leq x\}} (T_k - \mu)$ are
  asymptotically equivalent to those of $\ell(n) n^{-1/\alpha}
  \sum_{k=1}^{n/\mu} \ind_{\{W_k \leq x\}} (T_k - \mu)$ which converge
  to those of $\Lambda_\alpha(F(x))$. Since the variables $W_k$ are
  independent of $M_n$, we have, by the renewal theorem,
  \begin{gather*}
    \esp\left[ \left(\sum_{k=1}^{M_n} \left\{ \ind_{\{W_k \leq x\}}
          -F_W(x) \right\} \right)^2 \right] = F_W(x) \{1-F_W(x)\}
    \esp[M_n] = O(n).
  \end{gather*}
  Thus, $\sum_{k=1}^{M_n} \left\{ \ind_{\{W_k \leq x\}} -F_W(x)
  \right\} = o_P(\ell(n) n^{1/\alpha})$.
\end{proof}

\begin{proof}[Proof of Proposition \ref{prop:lowfreq} in the case of
  Parke's process]
\begin{align*}
  D_{n,j} & := \sum_{t=1}^n X_t \rme^{\rmi t x_j} = \sum_{t=1}^n
  \sum_{s \leq t} g_{s,t} \rme^{\rmi t x_j} \epsilon_s \\
  & = \sum_{s \leq 0} \sum_{t=1}^{(s+n_s)_+ \wedge n} \rme^{\rmi t
    x_j} \epsilon_s + \sum_{s=1}^n \sum_{t=s}^{(s+n_s) \wedge n}
  \rme^{\rmi t x_j} \epsilon_s = : U_{n,j} + V_{n,j}.
\end{align*}
As in the proof of Proposition \ref{prop:invariance}, the sum
defining $U_{n,j}$ is almost surely finite. If $j/n\to0$, then
$U_{n,j}$ converges almost surely to the random variable
$U=\sum_{s\leq 0} (s+n_s)_+ \epsilon_s$.  Split now $V_{n,j}$ into
three terms: $V_{n,j} = W_{n,j}-R_{n,j}+T_{n,j}$, with
\begin{gather}
  W_{n,j} = \sum_{s=1}^n \left(\sum_{t=s}^{(s+n_s)}
    \rme^{\rmi t x_j} \right) \epsilon_s,  \label{eq:defwnj} \\
  R_{n,j} = \sum_{s=1}^n \sum_{t=s}^{(s+n_s)}
  \rme^{\rmi t x_j} \ind_{\{s+n_s>n\}} \epsilon_s,  \nonumber \\
  T_{n,j} = \sum_{s=1}^n \sum_{t=s}^{n} \rme^{\rmi t x_j}
  \ind_{\{s+n_s>n\}} \epsilon_s . \nonumber
\end{gather}
Consider first $R_n$. Since the sequences $(n_s)$ and
$(\epsilon_s)$ are i.i.d. and independent of each other, we have:
\begin{align*}
  R_{n,j} & = \sum_{s=1}^n \rme^{\rmi s x_j} \frac{1-\rme^{\rmi
      (n_s+1) x_j}}{1-\rme^{\rmi x_j}} \ind_{\{s+n_s>n\}} \epsilon_s
  \stackrel{(d)}= \sum_{k=1}^n \rme^{-\rmi (k-1) x_j}
  \frac{1-\rme^{\rmi (n_k+1) x_j}}{1-\rme^{\rmi x_j}} \ind_{\{n_k \geq
    k\}} \epsilon_k,
\end{align*}
where $\stackrel{(d)}=$ denotes equality of laws.  Since almost
surely there is only a finite number of indices $k$ such that
$n_k\geq k$, if $j/n\to0$, this last sum converges almost surely
to $\sum_{k=1}^\infty (n_k+1) \ind_{\{n_k \geq k\}} \epsilon_k$.
Hence $R_{n,j}=O_P(1)$. Similarly, $T_{n,j}$ has the same
distribution as
\[
\sum_{k=1}^n \sum_{t=n-k+1}^{n} \rme^{\rmi t x_j} \ind_{\{n_k \geq
  k\}} \epsilon_k = \sum_{k=1}^n \sum_{u=0}^{k-1} \rme^{-\rmi u x_j}
\ind_{\{n_k \geq k\}} \epsilon_k.
\]
If $j/n\to0$, this last term converges to $\sum_{k=1}^\infty k
\ind_{\{n_k \geq k\}} \epsilon_k$, which is an almost surely
finite sum, whence $T_{n,j}=O_P(1)$.  In conclusion, as long as
$j/n\to0$, $W_{n,j}$ is the leading term in the decomposition of
$D_{n,j}$. Consider now $W_{n,j}$. It can be written as
\begin{align*}
  W_{n,j} & = \sum_{s=1}^n \rme^{\rmi s x_j} \frac{1-\rme^{\rmi
      (n_s+1) x_j}}{1-\rme^{\rmi x_j}} \epsilon_s = \sum_{s=1}^n
  \rme^{\rmi s x_j} \rme^{\rmi n_s x_j/2}
  \frac{\sin((n_s+1)x_j/2)}{\sin(x_j/2)}
  \; \epsilon_s \\
  & = \sum_{s=1}^n \rme^{\rmi s x_j}
  \frac{\sin((n_s+1)x_j/2)}{\sin(x_j/2)} \; \epsilon_s + \sum_{s=1}^n
  \rme^{\rmi s x_j} \left(\rme^{\rmi n_s x_j/2}-1\right)
  \frac{\sin((n_s+1)x_j/2)}{\sin(x_j/2)} \; \epsilon_s \\
  & = d_{n,j} + \sum_{s=1}^n \rme^{\rmi s x_j}
  \left(\frac{\sin((n_s+1)x_j/2)}{\sin(x_j/2)} - n_s-1\right) \;
  \epsilon_s \\
  & + \sum_{s=1}^n \rme^{\rmi s x_j} \left(\rme^{\rmi n_s
      x_j/2}-1\right) \frac{\sin((n_s+1)x_j/2)}{\sin(x_j/2)} \;
  \epsilon_s = d_{n,j} + r_{n,j}.
\end{align*}
To deal with the remainder terms, we use the following bounds:
there exists a constant $C$ such that for all $u\in\Rset$ and for
all $v\in(0,1)$,
\begin{gather*}
  \left| \rme^{\rmi u}-1 \right|  \leq C(|u| \wedge 1) \\
  \left| \frac{\sin(u v)}{\sin(v)} - u \right| \leq C |u| (|uv| \wedge
  1) + |u|v^2.
\end{gather*}
For $p\in(1,\alpha)$, applying these bounds and the moment bound
for independent zero mean random variables with finite $p$-th
moment (cf. Petrov (1995), addendum 2.6.20), we have:
\nocite{petrov:1995}
\begin{gather*}
  \esp[ |r_{n,j}|^p ] \leq C \sum_{s=1}^n \esp[(n_s)^p ((n_sj/n)
  \wedge 1)^p] = C n \esp[N^p ((Nj/n) \wedge 1)^p] + C n (j/n)^{2p}
  \esp[N^p].
\end{gather*}
Let us compute $\esp[N^p ((Nj/n)\wedge1)^p]$ for any $p>1$.
\[
\esp[N^p ((Nj/n)\wedge1)^p] = (j/n)^p\sum_{k=1}^{n/j} k^{2p} \,
\pr(N=k) + \sum_{k=n/j}^\infty k^{p} \, \pr(N=k) \leq C
(j/n)^{\alpha-p} L(n).
\]
Hence, for any $p\in(1,\alpha)$, $\esp[|r_{n,j}|]=O(L(n)
n^{1+(1-\alpha)/p} j^{\alpha/p-1})$.  If $j\leq n^\rho$ for some
$\rho\in(0,1-1/\alpha)$, then $p$ can be chosen close enough to
$\alpha$ so that $\lim_{n\to\infty} h(n) n^{-1/\alpha}
\esp[|r_{n,j}|]=0$, for any slowly varying function $h$.
\end{proof}
\begin{proof}[Proof of Proposition \ref{prop:lowfreq} in the case of
  Taqqu-Levy's process] For clarity, we denote in this proof $x_{n,j}
  = 2\pi j/n$.  By summing over each regime separately, we can express
  $D_{n,j}$ as
\begin{align*}
  D_{n,j} & = W_0 \sum_{t=0}^{S_0-1} \rme^{i t x_{n,j}} +
  \sum_{k=1}^{M_n} W_k \sum_{t=S_{k-1}}^{S_k-1} \rme^{\rmi t x_{n,j}}
  +  W_{M_n+1}   \sum_{t=S_{M_n}}^n \rme^{\rmi t x_{n,j}} \\
  & = r_{1,n,j} + w_{M_n,n,j} + r_{2,n,j},
\end{align*}
where we have defined:
\begin{align*}
  r_{1,n,j} & = W_0 \exp\{\rmi (S_0-1)x_{n,j}/2\} \frac {\sin(S_0
    x_{n,j}/2)} {\sin(x_{n,j}/2)}, \\
  r_{2,n,j} & = W_{M_n+1} \exp\{\rmi \{S_{M_n}+ (n-S_{M_n})/2\}
  x_{n,j}\}  \frac{\sin(\{n-S_{M_n}+1\}x_{n,j}/2)} {\sin(x_{n,j}/2)}, \\
  w_{m,n,j} & = \sum_{k=1}^{m} W_k \sum_{t=S_{k-1}}^{S_k-1} \rme^{\rmi
    t x_{n,j}} = \sum_{k=1}^{m} \rme^{\rmi
    \{S_{k-1}+\frac{T_k-1}2\}x_{n,j}} \frac{\sin(T_k
    x_{n,j}/2)}{\sin(x_{n,j}/2)} W_k.
\end{align*}
Obviously, $|r_{1,n,j}|\leq |W_0|S_0$, hence $r_{1,n,j} = O_P(1)$,
uniformly with respect to $j\leq n/2$.  To deal with $r_{2,n,j}$,
note that $n-S_{M_n}$ is the forward recurrence time of the
stationary renewal process $(S_n)_{n\geq0}$, hence its marginal
distribution is constant and is equal to that of $S_0$ (cf.
Resnick (1992), Theorem 3.9.1).  \nocite{resnick:1992} Thus, for
$q<\alpha-1$, $\esp[|r_{2,n,j}|^q] \leq
\esp[|W_{0}|^{q}]\esp[S_0^q] <\infty$. $r_{2,n,j}$ is also
$O_P(1)$, uniformly with respect to $j\leq n/2$. Applying Lemma
\ref{lem:remplace}, we obtain that  $w_{M_n,n,j} - w_{[n/\mu],n,j}
= o_P(n^{-1/\alpha} h(n)$, uniformly with respect to the sequence
$j$ and for any slowly varying function $h$.

\noindent We now prove that $h(n) n^{-1/\alpha}
(w_{[n/\mu],n,j}-d_{n,j}) = o_P(1)$.  Define $\tilde w_{m,n,j} =
\sum_{k=1}^{m} \rme^{\rmi
  (S_{k-1}-1/2) x_{n,j}}$ $ T_k W_k$.  Applying Lemma
\ref{lem:approx1} with $m=[n/\mu]$, $H(u,v)$ = $\rme^{\rmi u v/2}
\frac{\sin(u v/2)}{\sin(v/2)}$ and $\zeta_{n,k} = \rme^{\rmi
  (S_{k-1}-1/2) x_{n,j}}$, we obtain:
\begin{align}
  w_{[n/\mu],n,j} - \tilde w_{[n/\mu],n,j} = o_P(n^{1/\alpha}
  h(n)).
\end{align}
Define $\hat w_{m,n,j} = \sum_{k=1}^m \rme^{\rmi \{(k-1) \mu
-1/2\}
  x_{n,j}} T_k W_k$.  Applying Lemma \ref{lem:approx2} with $\zeta_k =
T_k W_k$, $K(u) = \rme^{\rmi u}$ yields
\begin{align}
  \tilde w_{[n/\mu],n,j} - \hat w_{[n/\mu],n,j} =
  o_P(n^{1/\alpha}h(n)).
\end{align}
Finally, we bound $\hat w_{[n/\mu],n,j}-d_{n,j}$.
\begin{align*}
  \hat w_{[n/\mu],n,j}-d_{n,j} & = \sum_{k=1}^{[n/\mu]} (\rme^{\rmi k
    \mu x_{n,j}}\rme^{-\rmi (\mu+1/2)x_{n,j}} - \rme^{\rmi
    k x_{[n/\mu],j}})  \zeta_k \\
  & = \sum_{k=1}^{[n/\mu]} \rme^{\rmi k \mu x_{n,j}}(\rme^{-\rmi
    (\mu+1/2)x_{n,j}} - 1) \zeta_k + \sum_{k=1}^{[n/\mu]} (\rme^{\rmi
    k \mu x_{n,j}} - \rme^{\rmi k x_{[n/\mu],j}})\zeta_k.
\end{align*}
Since $1/[n/\mu]-1/(n/\mu) = O(n^{-2})$ and $j \leq n^\rho$ with
$\rho < 1-1/\alpha$, we obtain:
\begin{align*}
  \esp[|\hat w_{[n/\mu],n,j}-d_{n,j}|] \leq C j/n =
  o(n^{1/\alpha}h(n)),
\end{align*}
for any slowly varying function $h$.
\end{proof}

\begin{proof}[Proof of Theorem~\ref{theo:highfreq} in the case of Parke's process]
  As seen in the proof of Proposition \ref{prop:lowfreq}, the main
  term in the decomposition of $D_{n,j}$ is $W_{n,j}$, defined in
  \eqref{eq:defwnj}. To prove convergence to a complex Gaussian law,
  we use the Wold device.  For $a,b$ $\in \Rset$, denote
\[
\xi_{n,s}(a,b) = \{a \cos((s+n_s/2)x_j)+ b \sin((s+n_s/2)x_j)\}
\frac{\sin((n_s+1)x_j/2) \epsilon_s}{\sin(x_j/2)}.
\]
Then $ \sum_{s=1}^n \xi_{n,s}(a,b) = a\re(W_{n,j})+
b\im(W_{n,j})$.  Denote $\sigma_n^2(a,b)$ = $\sum_{s=1}^n
\esp[\xi_{n,s}^2(a,b)]$. To prove that $\sigma_n^{-1}(a,b)
\sum_{s=1}^n \xi_{n,s}(a,b)$ is asymptotically Gaussian, it
suffices to prove that
\begin{align} \label{eq:liapounov}
\sum_{s=1}^n \esp[|\xi_{n,s}(a,b)|^q] =o(\sigma_n^q(a,b)),
\end{align}
for some $q>2$.  We first find an equivalent for
$\sigma_n^2(a,b)$. To simplify the notation, without loss of
generality, assume $\sigma_\epsilon^2 = 1$. We have
\begin{align*}
\sin^2&(x_j/2)    \esp[\xi_{n,s}^2]  = \esp[\{a
\cos((s+n_s/2)x_j)+ b
  \sin((s+n_s/2)x_j)\}^2 \sin^2((n_s+1)x_j/2)] \\
  & = a^2 \esp[\cos^2((s+n_s/2)x_j)\sin^2((n_s+1)x_j/2)]
  +ab \esp[\sin((2s+n_s) x_j)  \sin^2((n_s+1)x_j/2)] \\
  & + b^2 \esp[\sin^2((s+n_s/2)x_j) \sin^2((n_s+1)x_j/2)] \\
  & = \frac{a^2+b^2}2 \esp[\sin^2((n_s+1)x_j/2)] +
  \frac{a^2-b^2}2 \esp[\cos((2s+n_s)x_j)\sin^2((n_s+1)x_j/2)] \\
  & + a b\esp[\sin((2s+n_s)x_j)\sin^2((n_s+1)x_j/2)] \\
  & = \frac{a^2+b^2}2 \esp[\sin^2((n_s+1)x_j/2)] \\
  & + \left\{\frac{a^2-b^2}2 \cos(2sx_j) + ab \sin(2sx_j) \right\}
  \esp[\cos(n_s x_j)\sin^2((n_s+1)x_j/2)]  \\
  & - \left\{\frac{a^2-b^2}2 \sin(2sx_j) - ab \cos(2sx_j)
  \right\}\esp[\sin(n_s x_j) \sin^2((n_s+1)x_j/2)]  .
\end{align*}
Applying Lemma \ref{lem:sv}, we obtain that
\begin{gather}
  \lim_{n\to\infty} x_j^{-\alpha} L(1/x_j)^{-1} \esp[h(n_s
  x_j)\sin^2((n_s+1)x_j/2)] = \alpha \int_0^\infty h(t) \sin^2(t/2)
  t^{-\alpha-1} dt,
\end{gather}
with either $h(t) = \cos(t)$, $h(t) = \sin(t)$ or $h(t) \equiv 1$.
Now, since $j\to\infty$, we have:
\begin{align*}
\left| \frac 1 n \sum_{s=1}^n   \rme^{2 \rmi  s x_j} \right| \leq
\frac2{n|\rme^{2 \rmi x_j}-1|} = O(j^{-1}) = o(1).
\end{align*}
Thus,
\begin{multline*}
  \lim_{n\to\infty } n^{-1} x_j^{2-\alpha} L(1/x_j)^{-1} \sum_{s=1}^n
  \esp[\xi_{n,s}^2(a,b)] \\
  = 2 \alpha (a^2+b^2) \int_0^\infty \sin^2(t/2) t^{-\alpha-1} dt
  = (a^2+b^2) \; \int_0^\infty \sin(t) t^{-\alpha} dt \\
  = (a^2+b^2) \Gamma(1-\alpha) \sin(\pi(\alpha-1)/2) = (a^2+b^2)
  \frac{\Gamma(2H-1)}{2-2H} \sin(\pi H).
\end{multline*}
    %    See for instance Sato (1998), Lemma 14.11 for the value of the
    %                               integral above.  \nocite{sato:1998}
Hence, applying
\eqref{eq:spectraldensity}, we obtain:
\begin{gather*}
  \lim_{n\to\infty } (2\pi n f(x_j))^{-1} \sum_{s=1}^n
  \esp[\xi_{n,s}^2(a,b)] = \frac{a^2+b^2}{2}.
\end{gather*}
Hence $\sigma_n^2(a,b) \sim c n f(x_j) \to\infty$. Moreover, for
any $q>2$, we have:
\begin{gather*}
  \esp[|\xi_{n,s}|^q]  \leq C (|a|+|b|)^q x_j^{-q} \esp[|
  \sin((n_s+1)x_j/2)|^q] = O(x_j^{\alpha-q}L(1/x_j)), \\
  \sigma_n^{-q}(a,b) \sum_{s=1}^n \esp[\xi_{n,s}^q]  = O
  \left((nx_j^{\alpha})^{1-q/2}\right).
\end{gather*}
Since we have assumed that $j \gg n^{1-1/\alpha}$ and $q>2$, we
obtain that $nx_j^\alpha \to \infty$ and $\sum_{s=1}^n$
$\esp[\xi_{n,s}^q]$ = $o(\sigma_n^{q}(a,b))$ and
\eqref{eq:liapounov} holds.
\end{proof}
\begin{proof} [Proof of Theorem~\ref{theo:highfreq} in the case of Taqqu-Levy's process]
  Start by noting that if $j\gg n^{1-1/\alpha}$, then
  $\lim_{n\to\infty} (nf(x_{n,j}))^{-1/2} n^{1/\alpha}$ = 0.  Hence,
  as in the proof of Proposition \ref{prop:lowfreq}, we obtain that
  $(2\pi n)^{-1/2} f(x_{n,j})^{-1/2} (D_{n,j}-w_{[n/\mu],n,j}) =
  o_P(1)$.  We now prove that $w_{[n/\mu],n,j}$ is asymptotically
  complex Gaussian by the Wold device.  Here again, without loss of
  generality, we assume $\sigma_W^2=1$. For arbitrary real numbers $a$
  and $b$, define $v_n^2 = 2\pi n \sin^2(x_{n,j}/2) f(x_{n,j})$ and
\begin{multline*}
  \eta_{n,k} = v_n^{-1} \{a \cos((S_{k-1}+(T_k-1)/2)x_{n,j}) + b
  \sin((S_{k-1}+(T_k-1)/2)x_{n,j}) \} \sin(T_k x_{n,j}/2) W_k  \\
  = v_n^{-1} \cos(S_{k-1}x_{n,j}) \{a \cos((T_k-1)/2)x_{n,j}) + b
  \sin((T_k-1)/2)x_{n,j})\} \sin(T_k x_{n,j}/2) W_k \\
  + v_n^{-1} \sin (S_{k-1}x_{n,j}) \{-a \sin((T_k-1)/2)x_{n,j}) + b
  \cos((T_k-1)/2)x_{n,j})\} \sin(T_k x_{n,j}/2) W_k.
\end{multline*}
Then $(2\pi n)^{-1/2}f^{-1/2}(x_{n,j})\left\{a \,
\re(w_{[n/\mu],n,j})+ b \, \im (w_{[n/\mu],n,j})\right\} =
\sum_{k=1}^{[n/\mu]} \eta_{n,k}$.
Denote
\begin{align*}
  B_1(u) & = \{a \cos(u/2) + b \sin(u/2)\} \sin(u/2), \\
  B_2(u) & = \{b \cos(u/2) - a  \sin(u/2)\} \sin(u/2), \\
  \tilde \eta_{n,k} & = v_n^{-1} \left\{\cos(S_{k-1}x_{n,j}) B_1(T_k
    x_{n,j}) + \sin(S_{k-1}x_{n,j})B_2(T_k x_{n,j}) \right\} W_k,
\end{align*}
and $\tilde w_{m,n,j} = \sum_{k=1}^m \tilde \eta_{n,k}$.  Then
\[
\sum_{k=1}^{[n/\mu]} \eta_{n,k} - \tilde \eta_{n,k} =
O_P(f(x_{n,j})^{-1/2}) = o_P(1).
\]
Define $\mcm_j = \sum_{k=1}^j \tilde \eta_{n,k}$, $1 \leq j \leq
[n/\mu]$ and $\mcf=(\mcf_k)_{k\geq1}$ with $\mcf_k =
\sigma(T_j,W_j, j \leq k)$.  Then $\{\mcm_j\}$ is an
$\mcf$-martingale and $\mcm_{[n/\mu]} = \tilde w_{[n/\mu],n,j}$.
Hence, to prove that $\tilde w_{[n/\mu],n,j}$ is asymptotically
Gaussian, we must prove the conditional Lindeberg conditions:
\begin{gather}
  \mbox{ there exists $\sigma^2>0$ such that } \sum_{k=1}^{[n/\mu]}
  \esp[\tilde\eta_{n,k}^2 \mid \mcf_{k-1}]
  \stackrel P {\longrightarrow} \sigma^2, \label{eq:lindeberg1} \\
  \mbox{ and } \forall \epsilon>0, \ \ \sum_{k=1}^{[n/\mu]}
  \esp[\tilde\eta_{n,k}^2 \ind_{\{|\tilde\eta_{n,k}|\geq \epsilon \}}
  \mid \mcf_{k-1}] \stackrel P {\longrightarrow} 0.
  \label{eq:lindeberg2}
\end{gather}
To prove \eqref{eq:lindeberg1}, note that
\begin{align*}
  \esp[\tilde\eta_{n,k}^2 \mid \mcf_{k-1}] & = v_n^{-2}
  \{\cos^2(S_{k-1}  x_{n,j}) \esp[B_1^2(T_1x_{n,j})] \\
  & + \sin(2 S_{k-1}x_{n,j}) \esp[B_1(T_1x_{n,j})B_2(T_1x_{n,j})] + \sin^2(S_{k-1}
  x_{n,j}) \esp[B_2^2(T_1x_{n,j})] \}\sigma^2_W.
\end{align*}
Applying Lemmas \ref{lem:sv} and \ref{lem:marche} and using
similar computations as in the proof of the previous case, we
obtain:
\begin{align}
  \sum_{k=1}^{[n/\mu]} \esp[\tilde\eta_{n,k}^2 \mid \mcf_{k-1}]
  \stackrel P {\longrightarrow} \frac{a^2+b^2}{2}.
\end{align}
To prove \eqref{eq:lindeberg2}, since $\esp[|W^q|]<\infty$ for
some $q>2$, it is sufficient to prove that:
\begin{align}  \label{eq:final}
   \sum_{k=1}^{[n/\mu]} \esp[|\tilde \eta_{n,k}|^q]  = o(v_n^q).
\end{align}
Since $\esp[|\tilde \eta_{n,k}|^q] \leq 2^{q-1} v_n^{-q/2}
\{|B_1(T_k x_{n,j})|^q + |B_2(T_k x_{n,j})|^q \}$ and
$\esp[|B_i(T_k x_{n,j})|^q ] = O(x_{n,j}^{2}f(x_{n,j}))$, $i=1,2$,
we obtain:
\[
\sum_{k=1}^{[n/\mu]} \esp[|\tilde \eta_{n,k}|^q] = O(n v_n^{-q}
x_{n,j}^2 f(x_{n,j})) = O(v_n^{1-q/2}) = o(1).
\]
Hence \eqref{eq:final} holds. Thus we have shown that $\{2\pi n
f(x_{n,j} \}^{-1/2} D_{n,j}$ is asymptotically equivalent to $
\{2\pi n f(x_{n,j} \}^{-1/2} w_{[n/\mu],n,j}$ which converges
weakly to a standard complex normal law.
\end{proof}

\begin{proof}[Proof of Theorem~\ref{theo:autocov}]
  Define $\bar X_{n,k} = n^{-1} \sum_{j=1}^{n-k} X_j$ and $\tilde
  X_{n,k} = n^{-1} \sum_{j=k+1}^{n} X_j$.  By
  Proposition~\ref{prop:invariance}, $\bar X_n = O_P(\ell(n)
  n^{1/\alpha - 1})$, and obviously, it also holds that $\bar X_{n,k}
  = O_P(\ell(n) n^{1/\alpha - 1})$ and $\tilde X_{n,k} = O_P(\ell(n)
  n^{1/\alpha - 1})$. Thus,
  \begin{align*}
    \hat \gamma_n(k) & = n^{-1} \sum_{j=1}^{n-k} X_j X_{j+k} - \bar X_n
    \tilde X_{n,k} - \bar X_n \bar X_{n,k} + (\bar X_n)^2  \\
    & = n^{-1} \sum_{j=1}^{n-k} X_j X_{j+k} + O_P(\ell^2(n)
    n^{2/\alpha-2}).
  \end{align*}
  Thus it is sufficient to prove~(\ref{eq:degenere}) for the
  autocovariances without mean correction. From now on, we denote
  $\hat \gamma_n(k) = n^{-1} \sum_{j=1}^{n-k} X_j X_{j+k}$ and we
  pursue the proof in each case separately.
\end{proof}

\begin{proof}[Proof of Theorem \ref{theo:autocov} in the case of
  Taqqu-Levy's process]
  \begin{align*}
    \hat \gamma_n(k) & = n^{-1} \sum_{t=1}^{n-k} W_{M_t}W_{M_{t+k}} =
    n^{-1} \sum_{j,j'=0}^\infty W_j W_{j'}\sum_{t=1}^{n-k}
    \ind_{\{M_t=j\}} \ind_{\{M_{t+k}=j'\}} \\
    & = n^{-1} \sum_{j=0}^\infty W_j^2 \sum_{t=1}^{n-k}
    \ind_{\{M_t=M_{t+k}=j\}} + n^{-1} \sum_{j \ne j'=0}^\infty W_j
    W_{j'}\sum_{t=1}^{n-k} \ind_{\{M_t=j\}} \ind_{\{M_{t+k}=j'\}} \\
    & = \tilde \gamma_n(k) + r_n.
  \end{align*}
  Consider first $r_n$. Note that the sums in $j$ and $j'$ are limited
  to $n$ since by definition, $M_t \leq t$. If $j'<j$ or $j'>k$, the
  event $\{M_t=j;M_{t+k}=j'\}$ is empty. Hence:
\begin{align*}
  \esp[r_n^2] & = \frac{\sigma_W^4}{n^2} \sum_{j=0}^\infty
  \sum_{j'=j+1}^{j+k}
  \sum_{s,t=1}^{n-k} \pr(M_s=M_t=j;M_{s+k}=M_{t+k}=j') \\
  & = \frac{\sigma_W^4}{n^2} \sum_{j=0}^\infty \sum_{j'=j+1}^{j+k}
  \sum_{t=1}^{n-k}  \pr(M_t=j;M_{t+k}=j') \\
  & + \frac{\sigma_W^4}{n^2} \sum_{j=0}^\infty \sum_{j'=j+1}^{j+k}
  \sum_{1 \leq s < t \leq n-k} \pr(M_s=M_t=j;M_{s+k}=M_{t+k}=j')
\end{align*}
For $s<t$ and $j<j'$, the set $\{M_s=M_t=j;M_{s+k}=M_{t+k}=j'\}$
is empty if $s+k \leq t$. Hence:
\begin{align*}
  \esp[r_n^2] & = \frac{\sigma_W^4}{n^2}
  \sum_{t=1}^{n-k}  \pr(M_t < M_{t+k}) \\
  & + \frac{\sigma_W^4}{n^2} \sum_{s=1}^{n-k-1}
  \sum_{s+1<t<s+k-1} \pr(M_s=M_t < M_{s+k}=M_{t+k}) = O(n^{-1}).
\end{align*}
Thus $r_n(k) = O_P(n^{-1/2})$. Consider now $\tilde \gamma_n(k)$.
%Let $B=(B_n)_{n\in\Nset}$ denote the forward recurrence time of
%the stationary renewal process $(S_n)_{n\geq0}$, that is the time
%between $n$ and the next renewal: $B_n = S_{M_n}-n$. Then
%$B_0=S_0$ and $B$ is a stationary Markov process, hence in
%particular $B_n$ has the distribution of $S_0$ for each $n$.
%
By definition of the renewal process, $M_t = M_{t+k} = j$ if and
only if $S_{j-1} \leq t < S_j$ and $T_j \geq k$. Thus
\begin{gather*}
\tilde \gamma_n(k)  = \frac 1 n \sum_{j=1}^{M_{n-k}} W_j^2
\sum_{t=1}^{n-k} \ind_{\{M_t=M_{t+k}=j\}} = \frac 1 n
\sum_{j=1}^{M_{n-k}} W_j^2(T_j - k)\ind_{\{T_j \geq k\}}.
\end{gather*}
Define $\check \gamma_n(k) = \frac 1 n \sum_{j=1}^{[(n-k)/\mu]}
W_j^2(T_j - k)\ind_{\{T_j \geq k\}}$. By    Lemma
\ref{lem:remplace}, for any slowly varying function $h$, we have
that $\check \gamma_n(k) - \check \gamma_n(k) = o_P(n^{1-1/\alpha}
h(n))$. Note now that by definition, $\esp[ (T_1 - k)\ind_{\{T_1
\geq k\}}] = \mu \pr(S_0 \geq k)$. Thus:
\begin{align*}
\check \gamma_n(k) - \gamma(k) & = \frac 1 n
\sum_{j=1}^{[(n-k)/\mu]} W_j^2 (T_j - k)\ind_{\{T_j \geq k\}}  \\
& = \frac 1 n \sum_{j=1}^{[(n-k)/\mu]} W_j^2 \{(T_j -
k)\ind_{\{T_j \geq k\}} - \esp[ (T_1 - k)\ind_{\{T_1 \geq
k\}}]\} \\
& + \frac{\mu \pr(S_0 \geq k)}n \sum_{j=1}^{[(n-k)/\mu]} \{W_j^2 -
\sigma_W^2\} +  \gamma(k)  \{\mu \frac{[(n-k)/\mu]} n -1 \} \\
& = \frac 1 n \sum_{j=1}^{[(n-k)/\mu]} W_j^2 \{(T_j -
k)\ind_{\{T_j \geq k\}} - \esp[ (T_1 - k)\ind_{\{T_1 \geq k\}}]\}
+ O_P(n^{-1/2}) \\
& = \frac 1 n \sum_{j=1}^{[(n-k)/\mu]} W_j^2 \{T_j - \esp[T_1]\} +
O_P(n^{-1/2}).
\end{align*}
Thus we conclude that for any slowly varying function $h$,
\[
\hat \gamma_n(k) - \gamma(k) = \frac 1 n \sum_{j=1}^{[n/\mu]}
W_j^2 \{T_j - \esp[T_1]\} + o_P(n^{1/\alpha-1} h(n)). \]
 The rest of the proof is straightforward, given the other proofs
 in this paper, and is omitted to save space.
\end{proof}

\begin{proof}[Proof of Theorem~\ref{theo:autocov} in the case of Parke's process]
  \begin{align*}
    \hat \gamma_n(k) & = n^{-1} \sum_{t=1}^{n-k} \sum_{s\leq t}
    \sum_{s'\leq t+k} g_{s,t} g_{s',t+k} \epsilon_s \epsilon_{s'} =
    n^{-1} \sum_{s \leq n-k} \sum_{t=1}^{n-k} \ind_{\{s \vee 1 \leq
      t \leq (s+n_s-k) \wedge (n-k) \}} \epsilon_s^2 \\
    & + n^{-1} \sum_{\stackrel{s\leq n-k;s'\leq n}{s\ne s'}}
    \sum_{t=1}^{n-k} \ind_{\{s \vee 1 \leq t \leq (s+n_s) \wedge (n-k)
      \}} \ind_{\{(s'-k)\vee1 \leq t \leq (s'+n_{s'}-k) \wedge n \}}
    \epsilon_s    \epsilon_{s'} \\
    & = \tilde \gamma_n(k) + r_n(k).
  \end{align*}
We first consider $r_n(k)$. It is split into four terms as
follows.
\begin{align*}
  r_n(k) & = n^{-1} \sum_{\stackrel{s\leq 0;s'\leq k}{s\ne s'}}
  \{(s+n_s)_+ \wedge (s'+n_{s'}-k)_+ \wedge (n-k)\}
  \epsilon_s    \epsilon_{s'} \\
  & + n^{-1} \sum_{s\leq 0} \sum_{s'=k+1}^{n} [\{(s+n_s)_+ \wedge
  (s'+n_{s'}-k) \wedge (n-k)\} - s'+k+1]
  \epsilon_s    \epsilon_{s'} \\
  & + n^{-1} \sum_{s=1}^{n-k} \sum_{s'\leq k} [\{(s+n_s) \wedge
  (s'+n_{s'}-k)_+ \wedge (n-k)\} - s+1]
  \epsilon_s    \epsilon_{s'} \\
  & + n^{-1} \sum_{\stackrel{1\leq s \leq n-k ; k+1 \leq s' \leq n}{s
      \ne s'}} [(s+n_s)\wedge(s'+n_{s'}-k)\wedge(n-k) - s
  \vee(s'-k)]  \epsilon_s \epsilon_{s'} \\
  & = r_{1,n} + r_{2,n} + r_{3,n} + r_{4,n}.
\end{align*}
By the usual Borel Cantelli argument, $n r_{1,n}$ converges to the
almost surely finite sum $\sum_{\stackrel{s\leq 0;t\leq 0}{s\ne
t+k}}$ $\{(s+n_s)_+ \wedge (t+n_{t+k})_+ \} \epsilon_s
\epsilon_{t+k}$. Hence $r_{1,n} = O_P(n^{-1})$.
  By independence of the i.i.d. sequences $(\epsilon_s)$
and $(n_s)$, the terms $r_{2,n}$ and $r_{3,n}$ have the same
distribution. We consider for instance the former.  Let $S$ be the
set of nonpositive integers $s$ such that $s+n_s\geq0$. Then $S$
is almost surely finite.  Write $r_{2,n} = n^{-1} \sum_{s\in S}
\xi_{n,s} \epsilon_s$, with
\begin{align*}
  \xi_{n,s} =  \sum_{t=1}^{n-k} [\{(s+n_s)_+ \wedge
  (t+n_{t+k}) \wedge (n-k)\} -t+1]  \epsilon_{t+k}
\end{align*}
For each $s\in S$, we have:
\[
\lim_{n\to\infty} \xi_{n,s} = \sum_{t=1}^{s+n_s} [\{(s+n_s) \wedge
  (t+n_{t+k})\} -t+1]  \epsilon_{t+k}
\]
Since $S$ is almost surely finite, we thus obtain that
\[
\lim_{n\to\infty} n r_{2,n} =  \sum_{s\in S}
\sum_{t=1}^{s+n_s} [\{(s+n_s) \wedge (t+n_{t+k})\} -t+1]
\epsilon_{t+k}, \mbox{ almost surely.}
\]
 Hence $r_{2,n} = O_P(n^{-1})$ and similarly
$r_{3,n} = O_P(n^{-1})$.  Consider now the last term $r_{4,n}$.
\begin{align} \label{eq:r4nsquare}
  \esp[r_{4,n}^2] = \sigma_\epsilon^4 n^{-2} \sum_{\stackrel{1\leq s
      \leq n-k ; 1 \leq t \leq n-k}{s \ne t+k}}
  \esp[\{(s+n_s)\wedge(t+n_{t+k})\wedge(n-k) - s\vee t\}^2].
\end{align}
This last expectation is finite, since the term inside is at most
$n_s\wedge n_{t+k}$, and if $N'$ is an independent copy of $N$,
then $N \wedge N'$ is square integrable. Indeed, we have
\begin{align} \label{eq:square}
\pr(N\wedge N' \geq k) = \pr(N\geq k)^2 = L^2(k) k^{-2\alpha}.
\end{align}
Since $L$ is slowly varying, then so is $L^2$, and since
$\alpha\in(1,2)$, then \eqref{eq:square} implies that $N \wedge
N'$ is square integrable. Let us now compute the expectation in
the rhs of \eqref{eq:r4nsquare}. Assume $s<t\leq n-k$.
\begin{align*}
  \esp[\{(s+N) & \wedge(t+N') \wedge (n-k) - s\vee t\}^2] \\
  & = \sum_{j=t-s}^{n-k-s}\sum_{j'=0}^{n-k-t} \{(s+j)\wedge(t+j') -
  t\}^2 \pr(N=j) \pr(N'=j') \\
  & = \sum_{j=t-s}^{n-k-s}\sum_{j'=0}^{j-t+s} {j'}^2 \pr(N=j)
  \pr(N'=j') \\
  & + \sum_{j=t-s}^{n-k-s} \sum_{j'=j-t+s+1}^{n-k-t} (j-t+s)^2
  \pr(N=j) \pr(N'=j') \leq C L^2(t-s) (t-s)^{2-2\alpha}.
\end{align*}
Plugging this bound into \eqref{eq:r4nsquare}, we obtain:
\begin{align*}
  \esp[r_{4,n}(k)^2] = \left\{ \begin{array}{ll} O(\tilde L(n)
      n^{2-2\alpha} ) & \mbox{ if } \alpha \in (1,3/2], \mbox{ with
        $\tilde L$ slowly varying;} \\
      O(n^{-1}) & \mbox{ if } \alpha \in (3/2,2).
\end{array} \right.
\end{align*}
In conclusion, we have shown that  $r_{n}(k) = O_P(n^{1-\alpha})$.
\noindent Consider now $\tilde \gamma_n(k)$. Still by Borel
Cantelli arguments, we have
\begin{align*}
  \tilde \gamma_n(k) & = n^{-1} \sum_{s\leq 0}
  \{(s+n_{s}-k)_+ \wedge(n-k) \} \epsilon_s^2 \\
  & + n^{-1} \sum_{s=1}^{n-k} \{(s+n_s-k) \wedge (n-k) - s + 1\}
  \ind_{\{n_s\geq k\}} \epsilon_s^2 \\
  & = n^{-1} \sum_{s=1}^{n-k} (n_s-k + 1) \ind_{\{n_s\geq
    k\}} \epsilon_s^2 + O_P(n^{-1}).
\end{align*}
%
%
% the sum
% $\sum_{s\leq k} (s+n_s-k)_+ \wedge(n-k) \ind_{\{n_s\geq k\}}
% \epsilon_s^2$ converges almost surely to the almost surely finite sum
% $\sum_{s\leq k} (s+n_s-k)_+ \epsilon_s^2$, so that $\tilde
% \gamma_n(k)$ is asymptotically equivalent to the last term in the rhs
% of \eqref{eq:decomp1}. Still using Borel Cantelli arguments, this term
% is equivalent to $n^{-1} \sum_{s=1}^{n-k} \{n_s-k + 1\}\ind_{\{n_s\geq
%   k\}} \epsilon_s^2$.
% By the strong law of large number, we now
% obtain that $n^{-1} \sum_{s=k+1}^{n-k} \{n_s - k + 1\} \ind_{\{n_s\geq
%   k\}}\epsilon_s^2$ converges almost surely to
% \[
% \sigma_\epsilon^2 \esp[(N-k+1)\ind_{\{N\geq k\}}] = \sigma_\epsilon^2
% \sum_{j=k}^\infty (j-k+1) \pr(N = j) = \sigma_\epsilon^2
% \sum_{j=k}^\infty \pr(N\geq j) = \gamma(k).
% \]
%
%
Altogether, we have
\begin{align*}
  \hat \gamma_n(k) & - \gamma(k) = n^{-1} \sum_{s=1}^{n-k} (n_s-k + 1)
  \ind_{\{n_s\geq k\}} \epsilon_s^2 - \gamma(k) + O_P(n^{1-\alpha}) \\
  & = n^{-1} \sum_{s=1}^{n-k} \left\{ (n_s-k + 1) \ind_{\{n_s\geq k\}}
    - \esp[(n_s-k + 1) \ind_{\{n_s\geq k\}}]\right\} \epsilon_s^2 \\
  & \hspace*{6cm}+ \frac{\esp[(N-1+k)\ind_{\{N \geq k\}}]}n
  \sum_{s=1}^{n-k}
  \{\epsilon_s^2-\sigma_\epsilon^2\}  + O_P(n^{1-\alpha}) \\
  & = n^{-1} \sum_{s=1}^{n-k} \left\{ (n_s-k + 1) \ind_{\{n_s\geq k\}}
    - \esp[(n_s-k + 1) \ind_{\{n_s\geq k\}}]\right\}
  \epsilon_s^2 +O_P(n^{-1/2}) + O_P(n^{1-\alpha}) \\
  & = n^{-1} \sum_{s=1}^{n} \left\{n_s - \esp[N]\right\} \epsilon_s^2
  +O_P(n^{-1/2}) + O_P(n^{1-\alpha}).
\end{align*}
Thus, if $\esp[|\epsilon_0|^q]<\infty$ for some $q>2\alpha$, then
$\ell(n)^{-1} n^{1-1/\alpha} (\hat \gamma_n(k) - \gamma(k))$
converges weakly to an $\alpha$-stable distribution.
\end{proof}

\end{document}